\def\margin_comment#1{\marginpar{\sffamily{\tiny #1\par}\normalfont}}
\newcommand{\Div}[1]{\operatorname{Div}(#1)}
\tikzset{join/.code=\tikzset{after node path={%
\ifx\tikzchainprevious\pgfutil@empty\else(\tikzchainprevious)%
edge[every join]#1(\tikzchaincurrent)\fi}}}
\tikzset{>=stealth',every on chain/.append style={join},
         every join/.style={->}}
\tikzstyle{labeled}=[execute at begin node=$\scriptstyle,
\newtheorem{thm}{Theorem}[section]
\numberwithin{equation}{section} 
\numberwithin{figure}{section} 
\theoremstyle{plain}
\newtheorem*{thm*}{Theorem}
\theoremstyle{definition}
\theoremstyle{plain}
\newtheorem{thm_A}{Theorem}
\newtheorem*{defn*}{Definition}
\theoremstyle{plain}
\theoremstyle{plain} 
\theoremstyle{plain}
\newtheorem{prop}[thm]{Proposition} 
\theoremstyle{remark}
\newtheorem{ex}[thm]{Example}
\theoremstyle{remark}
\newtheorem{rem}[thm]{Remark}
\theoremstyle{plain}
\theoremstyle{plain}
\theoremstyle{plain}
\newtheorem{lem}[thm]{Lemma}
\newtheorem*{lem*}{Lemma} 
\theoremstyle{definition}
\newtheorem{defn}[thm]{Definition}
\newtheorem*{acknowledgment}{Acknowledgment}
\newtheorem*{acknowledgment*}{Addentum}
\theoremstyle{plain}
\newtheorem*{ex*}{Example}
\theoremstyle{plain}
\begin{document}
\title[A group of automorphisms for a family of Garside groups]{Construction of a group of automorphisms for an infinite family of Garside groups}
\author{Fabienne Chouraqui}
\begin{abstract}
The structure groups of  non-degenerate  symmetric set-theoretical solutions of the quantum Yang-Baxter equation provide an infinite family of Garside groups with many interesting properties. Given a non-degenerate  symmetric solution, we construct for  its  structure group a group of automorphisms. Moreover, we show this group of automorphisms admits a subgroup that preserves the Garside properties of  the structure group. In some cases, we could also prove the group of automorphisms obtained is an outer automorphism group.
\end{abstract}
\maketitle
AMS Subject Classification: 16T25, 20F36.\\
Keywords: Group of automorphisms; Outer automorphism group;  Set-theoretical solution of the quantum Yang-Baxter equation;  Garside groups; bijective cocycle.
\section*{Introduction}
The quantum Yang-Baxter equation is an equation in the field of mathematical physics and it lies in the foundation of the theory of quantum groups.
Let $R:V  \otimes V \rightarrow V  \otimes V$ be a linear operator, where $V$ is a vector space. The quantum Yang-Baxter equation is the equality $R^{12}R^{13}R^{23}=R^{23}R^{13}R^{12}$ of linear transformations on $V  \otimes V \otimes V$, where $R^{ij}$ means $R$ acting on the $i$th and $j$th components.  
V. Drinfeld suggested the study of a set-theoretical solution, that is  a pair $(X,S)$ for which $V$ is the  vector space spanned by the set $X$ and $R$ is the linear operator induced by the mapping $S: X \times X \rightarrow X \times X$.  To each  non-degenerate and symmetric  set-theoretical solution $(X,S)$ of the quantum Yang-Baxter equation, P.Etingof, T.Schedler and A.Soloviev associate a group $G(X,S)$ called the structure group. Furthermore, they show 
that each non-degenerate symmetric solution $(X,S)$ is  in one-to-one correspondence with a quadruple $(G,X,\rho, \pi)$, where $G$ is a group, $X$ is a set, $\rho$ is a left action of $G$ on $X$, and $\pi$ is a bijective $1$-cocycle of $G$ with coefficients in $\mathbb{Z}^{X}$, where $\mathbb{Z}^{X}$ is the free abelian group generated by $X$ \cite{etingof}.

In this paper, given a non-degenerate symmetric solution $(X,S)$ with $\mid X \mid =n$ and structure group $G(X,S)$, we use the bijective $1$-cocycle $\pi:G(X,S) \rightarrow \mathbb{Z}^{X}$ and the fact that $\operatorname{Aut}(\mathbb{Z}^{X})=  GL_n (\mathbb{Z})$ to construct a group of automorphisms of $G(X,S)$. Indeed, given   $\sigma \in GL_n (\mathbb{Z})$, we define a bijection $\varphi: G(X,S) \rightarrow G(X,S)$ by 
$\varphi=\, \pi^{-1} \circ \sigma \circ \pi $, such that the following diagram is commutative:
\begin{center} \begin{tikzpicture}
   \matrix (m) [matrix of math nodes,row sep=3em,column sep=4em,minimum width=2em]
   {
      G(X,S) & \mathbb{Z}^{X} \\
      G(X,S) & \mathbb{Z}^{X} \\};
   \path[-stealth]
     (m-1-1) edge node [left] {$\varphi$} (m-2-1)
             edge [double] node [above] {$\pi$} (m-1-2)
     (m-2-1.east|-m-2-2) edge [double]
             node [above] {$\pi$} (m-2-2)
     (m-1-2) edge node [right] {$\sigma$} (m-2-2)
             ;
 \end{tikzpicture} \end{center}
 We find a necessary and sufficient condition on   $\sigma$  ensuring that $\varphi=\, \pi^{-1} \circ \sigma \circ \pi $ is a homomorphism of the group $G(X,S)$.  Note that depending on the context, we consider $\sigma \in \operatorname{Aut}(\mathbb{Z}^{X})=  GL_n (\mathbb{Z})$ as a map or as matrix and  we write the elements in $\mathbb{Z}^{X}$ multiplicatively. We prove the following result  and we refer to Theorem \ref{theo_im_group_auto}, where another equivalent condition is given and more generally to  Section \ref{sec_mainResult}.
 \begin{thm_A}\label{intro_theo_aut_subgp}
   Let $(X,S)$ be  a non-degenerate symmetric set-theoretical solution of the quantum Yang-Baxter equation, with $\mid X \mid =n$. Let $(G(X,S),X,\bullet, \pi)$ be  the  corresponding quadruple, where  $G(X,S)$ is the structure group, $\bullet$ denotes the left action  of  $G(X,S)$ on $X$ by permutations extended to $\mathbb{Z}^{X}$ and  $\pi: G(X,S) \rightarrow \mathbb{Z}^{X}$ is the  bijective $1$-cocycle defined by $\pi(x)=t_x$, $x \in X$ and  $\pi(a_1a_2)=(a_2^{-1}\bullet \pi(a_1))\,\pi(a_2)$, $a_1,a_2 \in G(X,S)$.  
  Let $\Im_\pi$  to be the following set: 
      \begin{eqnarray*}
    \Im_\pi=\,\{\sigma \in GL_n (\mathbb{Z}) \mid  A_j\,\sigma\,=\,\sigma\,(f_j^{-1}),\,  \forall 1 \leq j\leq n \}
     \end{eqnarray*}
   where $(f_j^{-1})$ and  $A_j$ are the permutation matrices corresponding to the action of $x_j$ and  $(\varphi(x_j))^{-1}$ respectively. We define  $\Phi_{\pi}$ to be the following set: 
                 \begin{equation}\Phi_{\pi} =\{ \varphi: G(X,S) \rightarrow G(X,S), \;\;\;\varphi= \pi^{-1} \circ \sigma \circ \pi\, \mid \sigma \in \Im_\pi \}  \end{equation} 
            Then $\Im_\pi \leq  GL_n (\mathbb{Z})$ and $\Phi_{\pi}\leq \operatorname{Aut}(G(X,S))$.   
    \end{thm_A}
    There exists a strong  connection between set-theoretical solutions and Garside groups \cite{chou_art,chou_godel}, \cite{gateva}, \cite{ddkgm}. Indeed,  there is a one-to-one correspondence between non-degenerate symmetric set-theoretical solutions of the quantum Yang-Baxter equation and Garside groups with a particular presentation. The structure group $G(X,S)$ of $(X,S)$ is a Garside group that admits a Garside element of length $n=\mid X \mid $ which is the lcm of $X$ with respect to left and right divisibility \cite{chou_art}. Furthermore, $G(X,S)$ admits  a normal free abelian subgroup  $N$ of rank $n$,  such that the finite quotient group $G(X,S)/N$ plays the role the finite Coxeter groups play for finite-type Artin groups \cite{chou_godel2}, \cite{deh_coxeterlike}. The subgroup $N$ is freely generated by the so-called frozen elements $\theta_1, \theta_2,..\theta_n$. Whenever   $\varphi$ is an automorphism of the group $G(X,S)$, $\varphi \in \Phi_{\pi}$, a question that arises naturally is  whether $\varphi$ preserves the Garside structure of $G(X,S)$. The answer is negative in general, as $\varphi \in \Phi_{\pi}$ is not necessarily an automorphism of the underlying monoid. Nevertheless, we show that $\Phi_{\pi}$ admits a subgroup of automorphisms that preserves the Garside structure of $G(X,S)$ and  $N$; the automorphisms in this subgroup are induced by permutation matrices in   $\Im_\pi$.
 \begin{thm_A}\label{intro_theo_garside}
 Let  $(X,S)$ be a non-degenerate symmetric  solution, with  structure group  $G(X,S)$ and $\mid X \mid =n$. Let 
 $\Delta$ be the Garside element of length $n$ and  $\operatorname{Div}(\Delta)$ be the set of left (and right) divisors of  $\Delta$. Let  $N$ be the free abelian normal subgroup of  $G(X,S)$ generated by the frozen elements $\theta_1, \theta_2,..\theta_n$. Let $\sigma$ be a  permutation matrix in  $\Im_\pi$ and let $\varphi \in \operatorname{Aut}(G(X,S))$ be induced  by $\sigma$.  Then 
 $\varphi$ is a length-preserving isomorphism of the bounded lattice $\operatorname{Div}(\Delta)$ w.r to right and left divisibility. Furthermore, $\varphi(N)=N$.
  \end{thm_A}
To some extent, we could extend the results of Theorem \ref{intro_theo_garside} to the subgroup of $\Phi_\pi$ induced by the generalized permutation matrices in $\Im_\pi$. We refer to Proposition \ref{prop_gene.permut_preserve_extended_garside}.  Indecomposable solutions are the ``building blocks'' of the solutions and there is a parallel between the role of the indecomposable solutions and the irreducible components of  a finite-type Artin group. But, while a finite-type Artin group is the direct product of its irreducible components, this is not necessarily the case for the structure group of a decomposable solution. For  indecomposable solutions, the groups   $\Im_\pi$ and $\Phi_\pi$ satisfy some properties that are very specific to them and in particular  for the (unique) indecomposable solutions with $\mid X \mid =2,3$,  we show the group $\Phi_{\pi}$ is an outer automorphism group. More precisely, we prove:
         \begin{thm_A}\label{intro_theo_out}
         Let  $(X,S)$ be an indecomposable non-degenerate symmetric  solution with structure group $G(X,S)$.  Assume $\mid X \mid=2,3$. Then\\
          $(i)$ $\Im_\pi$  is a finite group of generalized permutation matrices in $\operatorname{GL_n(\mathbb{Z})}$.\\
          $(ii)$ $\Phi_{\pi}$ has a normal subgroup of index $2$ that preserves the Garside structure.\\
           $(iii)$ $\Phi_{\pi} =\operatorname{Out}(G)$ for $\mid X \mid=2$ and $\Phi_{\pi} \leq \operatorname{Out}(G)$ for $\mid X \mid=3$.
         \end{thm_A}  
    The paper is organised as it follows. In Section~\ref{sec_QYBE}, we introduce the structure group of a set-theoretical solution of the quantum Yang-Baxter equation, the bijective  $1$-cocycle and its properties. In Section \ref{Sec_background_garside}, we recall some basic material on Garside theory and we present the Garside structure specific to the structure group of a set-theoretical solution of the quantum Yang-Baxter equation. In Section \ref{sec_mainResult}, we describe the construction of the  group of automorphisms $\Phi_\pi$, we prove Theorem \ref{intro_theo_aut_subgp} and  we show the invariance of $\Phi_\pi$ under some particular changes of bijective  $1$-cocycle.
 In Section \ref{sec_generalized}, we show the existence of a group of automorphisms that preserves the Garside structure of  the structure group and we prove Theorem \ref{intro_theo_garside}. 
In Section \ref{Sec_indecomposable}, we consider the special case of indecomposable solutions, we prove some properties  of $\Im_\pi$ and $\Phi_\pi$ that are specific to general indecomposable solutions and we prove Theorem \ref{intro_theo_out}. In Section \ref{sec_Application_to_quotientgroup}, we present an application of our construction to the Coxeter-like quotient group $G(X,S)/N$. At the end of the paper, we add an appendix, in which we give for completion the proofs of the properties of the bijective 1-cocycle that appear in Section 1.2.
\begin{acknowledgment}
I am very grateful to
 Arye Juhasz and Yuval Ginosar for fruitful discussions.
\end{acknowledgment}

\section{Set-theoretical solutions of the quantum Yang-Baxter equation}\label{sec_QYBE}

\subsection{Background on set-theoretical solutions of the QYBE}
\label{subsec_qybe_Backgd}
We follow~\cite{etingof} and refer to~\cite{etingof}, \cite{jespers_book} and \cite{ginosar,ginosar2,cedoetall,gateva_van,jespers} for more details.
Fix a finite dimensional vector space~$V$ over the field~$\mathbb{R}$. The Quantum Yang-Baxter Equation on~$V$ is the equality $R^{12}R^{13}R^{23}=R^{23}R^{13}R^{12}$ of linear transformations on $V  \otimes V \otimes V$, where ~$R: V  \otimes V\to V  \otimes V$ is a linear operator and $R^{ij}$ means $R$ acting on the $i$th and $j$th components. A \emph{set-theoretical solution} of this equation is a pair~$(X,S)$ such that $X$ is a basis for $V$ and $S : X \times X \rightarrow X \times X$ is a bijective map.
The map $S$ is defined by  $S(x,y)=(g_{x}(y),f_{y}(x))$, where $f_x, g_x:X\to X$ are functions  for all  $x,y \in X$. The pair~$(X,S)$ is \emph{non-degenerate} if for any  $x\in X$, the functions $f_{x}$ and $g_{x}$ are bijections. It is  \emph{involutive} if $S\circ S = Id_X$, and  \emph{braided} if $S^{12}S^{23}S^{12}=S^{23}S^{12}S^{23}$, where the map $S^{ii+1}$ means $S$ acting on the $i$-th and $(i+1)$-th components of $X^3$.
It is said to be \emph{symmetric} if  it is involutive and braided.
Let $\alpha:X \times X \rightarrow X\times X$ be defined by $\alpha(x,y)=(y,x)$, and let $R=\alpha \circ S$, then $R$ satisfies the QYBE if and only if  $(X,S)$ is  braided \cite{etingof}. The solution $(X,S)$ is \emph{the trivial solution} if $g_{x}=f_{x}=Id_X$, $\forall x \in X$.
\begin{defn} \label{def_struct_gp} Let ~$(X,S)$ be a non-degenerate symmetric set-theoretical solution. The \emph{structure group} of $(X,S)$ is presented by   $\operatorname{Gp} \langle X\mid\ xy = g_x(y)f_y(x)\ ;\ x,y\in X \rangle\label{equation:structuregroup1}$. 
\end{defn}
For each $x \in X$,  there are unique $y,z \in X$ such that~$S(x,y) = (x,y)$ and~$S(z,x) = (z,x)$, since  $g_x,f_x$ are bijective and $S$ is involutive. So, the presentation of~$G(X,S)$ contains $\frac{n(n-1)}{2}$ defining relations, where $\mid X \mid =n$. Note that the structure group of the trivial solution is the free abelian group generated by $X$.
 \begin{ex} \label{exemple:exesolu_et_gars} Let $X = \{x_1,x_2,x_3,x_4\}$, and ~$S: X\times X\to X\times X$ be defined by $S(x_i,x_j)=(x_{g_{i}(j)},x_{f_{j}(i)})$ where~$g_i$ and $f_j$ are permutations on~$\{1,2,3,4\}$ as follows: $g_{1}=(2,3)$, $g_2=(1,4)$, $g_{3}=(1,2,4,3)$, $g_{4}=(1,3,4,2)$; $f_{1}=(2,4)$, $f_2=(1,3)$, $f_{3}=(1,4,3,2)$, $f_{4}=(1,2,3,4)$.
The solution  $(X,S)$ is  non-degenerate and symmetric and its  structure group $G(X,S)$ has the following defining relations:  $$\begin{array}{ccccc}
 x_{1}x_{2}=x^{2}_{3};& x_{1}x_{3}=x_{2}x_{4};&
 x_{2}x_{1}=x^{2}_{4};\\ x_{2}x_{3}=x_{3}x_{1};&
 x_{1}x_{4}=x_{4}x_{2};&x_{3}x_{2}=x_{4}x_{1}\\
  \end{array}$$
 There are four trivial relations: $x^{2}_{1}=x^{2}_{1},\,x^{2}_{2}=x^{2}_{2},\,x_{3}x_{4}=x_{3}x_{4},\,x_{4}x_{3}=x_{4}x_{3}$.
  \end{ex}
 \begin{defn}\label{def_decomposale}\cite{etingof}
  Let $(X,S)$ be  a non-degenerate symmetric solution.\\
 $(i)$ A subset $Y$ of $X$ is an \emph{invariant} subset if $S(Y \times Y)\subseteq Y \times Y$.\\
 $(ii)$ An invariant subset $Y$  is \emph{non-degenerate} if $(Y,S\mid_{Y\times Y})$ is  non-degenerate and symmetric. \\
 $(iii)$ The solution $(X,S)$ is  \emph{decomposable} if $X$ is the union of two non-empty disjoint non-degenerate invariant subsets. Otherwise, $(X,S)$ is \emph{indecomposable}.
 \end{defn} 
 Two solutions
  $(X,S)$ and $(X',S')$ are \emph{equivalent solutions} if there exists a bijection $\alpha: X \rightarrow X'$ such that $S' \circ \alpha \times \alpha=\alpha \times \alpha \circ S$. There exists a unique indecomposable  solution $(X,S)$ (up to equivalence) for  $\mid X \mid =p$, $p$ a prime. Indeed, in  this case,    $(X,S)$ is a permutation solution, that is all the functions $f_i$ are equal to a  $p$-cycle $f$ and  all the functions $g_i$ are equal to $f^{-1}$. A classification of non-degenerate and symmetric solutions with $X$ up to $8$ elements, considering their decomposability and other properties is given in \cite{etingof}.

P.Etingof, T.Schedler and A.Soloviev show that non-degenerate symmetric solutions, up to equivalence, are in one-to-one correspondence with quadruples $(G,X,\rho, \pi)$, where $G$ is a group, $X$ is a set, $\rho$ is a left action of $G$ on $X$, and $\pi$ is a bijective $1$-cocycle of $G$ with coefficients in $\mathbb{Z}^{X}$, where $\mathbb{Z}^{X}$ is the free abelian group spanned by $X$. Indeed, they show that the group $G(X,S)$ is naturally a subgroup of $\operatorname{Sym(X)} \ltimes \mathbb{Z}^{X}$, such that the $1$-cocycle defined by the projection $G(X,S) \rightarrow \mathbb{Z}^{X}$ is bijective. More precisely they define and prove the following facts:
\begin{thm}\cite{etingof}\label{prop_etingof}
 Let $(X,S)$ be  a non-degenerate symmetric  solution of the quantum Yang-Baxter equation and $G(X,S)$ be its structure group. Let $\operatorname{Sym(X)} $ be the group of permutations of $X$ and $\mathbb{Z}^{X}$ be the free abelian group spanned by $X$. Let the map $\phi_f: G(X,S) \rightarrow  \operatorname{Sym(X)} \ltimes \mathbb{Z}^{X}$ be defined by $\phi_f(x)= f_{x}^{-1}\,t_x$, where $x \in X$ and $t_x$ is  the generator of $\mathbb{Z}^{X}$ corresponding to $x$. Then\\
 $(i)$ The assignment $x \mapsto f_x^{-1}$ is a left action of $G(X,S)$ on $X$.\\
  $(ii)$ Let $a \in G(X,S)$ and $w=t_1^{m_1}t_2^{m_2}..t_n^{m_n} \in \mathbb{Z}^{X}$. Assume  $a$ acts on $X$ via the permutation $f$. Then $a$ acts on $\mathbb{Z}^{X}$ in the following way: $a \bullet t_x= t_{f(x)}$ and  $a \bullet w= t_{f(1)}^{m_1}t_{f(2)}^{m_2}..t_{f(n)}^{m_n}$,  where $\bullet$ denotes the extension of the left action  of $G(X,S)$ on $X$ defined in $(i)$ to $\mathbb{Z}^{X}$.\\
 $(iii)$ The map  $\phi_f$  is a monomorphism. \\
 $(iv)$ The map $\pi: G(X,S) \rightarrow \mathbb{Z}^{X}$, defined by $\pi(g)= w$ if $\phi_f(g)= \alpha\, w$, with $\alpha \in \operatorname{Sym(X)}$ and  $w \in \mathbb{Z}^{X}$,  is a bijective $1$-cocycle satisfying $\pi(a_1a_2)=(a_2^{-1}\bullet \pi(a_1))\,\pi(a_2)$.
\end{thm}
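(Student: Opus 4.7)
The plan is to verify the four parts in order, using the braided and involutive hypotheses on $S$ to drive (i)--(iii), and then obtain (iv) as a formal consequence of the semidirect product multiplication together with a bijectivity argument. For part (i), I would extend the assignment $x \mapsto f_x^{-1}$ from $X$ to a homomorphism $G(X,S) \to \operatorname{Sym}(X)$ by verifying it respects each defining relation $xy = g_x(y)\,f_y(x)$. This reduces to checking the identity $f_y \circ f_x = f_{f_y(x)} \circ f_{g_x(y)}$ in $\operatorname{Sym}(X)$, which is precisely the third coordinate of the braid relation $S^{12}S^{23}S^{12} = S^{23}S^{12}S^{23}$ evaluated on a general triple. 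Hence it holds because $(X,S)$ is braided.

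For part (ii), once (i) is established, the extension of the action to $\mathbb{Z}^{X} = \bigoplus_{x \in X} \mathbb{Z}\,t_x$ is forced: each $a \in G(X,S)$ already acts on $X$ by some permutation $f$, and linearity dictates $a \bullet t_x = t_{f(x)}$. For part (iii), I would check that $\phi_f$ respects each defining relation of $G(X,S)$ by computing both $\phi_f(x)\phi_f(y)$ and $\phi_f(g_x(y))\phi_f(f_y(x))$ inside $\operatorname{Sym}(X) \ltimes \mathbb{Z}^{X}$. The permutation components agree by (i), and after using the action from (ii), the abelian components reduce to the identity $y = f_{f_y(x)}(g_x(y))$, which is exactly the content of $S \circ S = \mathrm{Id}_{X\times X}$ applied to the pair $(g_x(y),f_y(x))$. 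This shows $\phi_f$ is a well-defined homomorphism.

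For part (iv), define $\pi$ as the $\mathbb{Z}^{X}$-component of $\phi_f$. The cocycle identity is a formal consequence of the multiplication in the semidirect product: if $\phi_f(a_i) = \alpha_i\,\pi(a_i)$ with $\alpha_i \in \operatorname{Sym}(X)$, then $\phi_f(a_1 a_2) = \alpha_1 \alpha_2 \cdot (\alpha_2^{-1} \bullet \pi(a_1)) \cdot \pi(a_2)$, so projecting to $\mathbb{Z}^X$ gives exactly $\pi(a_1 a_2) = (a_2^{-1} \bullet \pi(a_1))\,\pi(a_2)$. The main obstacle is bijectivity of $\pi$, which also delivers the injectivity of $\phi_f$ needed to complete (iii). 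Surjectivity follows inductively from $\pi(x) = t_x$ together with the cocycle formula, so every monomial in the $t_x$ is attained. Injectivity is the nontrivial step: it is proved by comparing graded dimensions, showing that the positive monoid $G(X,S)^{+}$ has, in each total degree $k$, the same number of elements as $(\mathbb{Z}_{\geq 0})^{X}$. This in turn requires that $G(X,S)^{+}$ satisfies Ore's condition and admits a Garside-style normal form; alternatively one invokes the I-type structure established in the same paper. Bijectivity on the whole group then follows because $G(X,S)$ is the group of fractions of $G(X,S)^{+}$.
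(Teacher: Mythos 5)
You should first note that the paper itself does not prove this statement: it is quoted verbatim from \cite{etingof}, so the only possible comparison is with the argument of Etingof--Schedler--Soloviev. Measured against that, the formal parts of your proposal are correct and are exactly the standard computations: reducing (i) to the identity $f_y\circ f_x=f_{f_y(x)}\circ f_{g_x(y)}$, which is one of the components of the braid relation; reducing the homomorphism property of $\phi_f$ to $y=f_{f_y(x)}(g_x(y))$, which is the second coordinate of $S\circ S=Id$; extending the action linearly for (ii); and extracting the cocycle identity $\pi(a_1a_2)=(a_2^{-1}\bullet\pi(a_1))\,\pi(a_2)$ from the multiplication rule $f_x^{-1}t_x\,f_y^{-1}t_y=f_x^{-1}f_y^{-1}t_{f_y(x)}t_y$ in $\operatorname{Sym}(X)\ltimes\mathbb{Z}^{X}$.

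The genuine gap is the bijectivity of $\pi$, which also carries the injectivity asserted in (iii); this is the actual content of the theorem and your proposal does not prove it. The counting argument you sketch --- that $G(X,S)^{+}$ has in each degree as many elements as $(\mathbb{Z}_{\geq 0})^{X}$ --- is essentially the statement that the monoid is of $I$-type, and the $I$-type property is little more than a reformulation of the bijectivity of the (monoid-level) cocycle; so ``invoking the $I$-structure established in the same paper'' is dangerously close to assuming what is to be proved, unless you cite an independent proof of it (e.g.\ \cite{gateva_van}, \cite{jespers}) or the Garside-theoretic route of \cite{chou_art}, neither of which you actually develop: the Ore condition, the normal form, and the degreewise count are all asserted rather than established. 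Two further steps are glossed: surjectivity does not follow just because ``every monomial in the $t_x$ is attained,'' since $\pi$ is not a homomorphism --- mixed-sign monomials require the identity $\pi(a^{-1})=(a\bullet\pi(a))^{-1}$ and an induction on length; and passing from bijectivity on the monoid to bijectivity on the group of fractions also needs a twisted-fraction argument, again because $\pi$ is only a cocycle. In \cite{etingof} bijectivity is obtained by a direct induction of this kind, using non-degeneracy and involutivity and no Garside machinery; a self-contained proof would have to carry out such an induction (or genuinely reproduce the $I$-type count) rather than point to it.
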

The product in $\operatorname{Sym(X)} \ltimes \mathbb{Z}^{X}$ is defined by: $f_x^{-1}t_x \, f_y^{-1}t_y= f_x^{-1} f_y^{-1}t_{f_y(x)}t_y$ \cite{etingof}.
\subsection{Computation rules for the  $1$-cocycle and its inverse}\label{sec_prelimin1}
We assume  $(X,S)$ is a non-degenerate symmetric set-theoretical solution with $X$ finite of cardinality $n$ and structure group $G(X,S)$. We denote by $\mathbb{Z}^{n}$ the free abelian group generated  by $X$ written multiplicatively. Here, we present some technical preliminary results, in particular we give some computation rules for the  $1$-cocycle $\pi$ and its inverse $\pi^{-1}$ that are implicit in \cite{etingof} and can be  easily derived from \cite{deh_coxeterlike}. The proofs of the lemmas of this section appear in the appendix. With no loss of generality, we assume $\pi(1)=1$.
\begin{lem}\cite{etingof}\label{lem_etingof_inverseT}
Let $T: X \rightarrow X$ be the map defined by $T(x) =f^{-1}_x(x)$. Then the  map $T$ is invertible and $T^{-1}(y)= g^{-1}_y(y)$. Furthermore, $f_x^{-1}\circ T =T \circ g_x$, $\forall x \in X$.
\end{lem}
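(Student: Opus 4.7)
The plan is to prove Part 1 (invertibility of $T$ with explicit inverse $T^{-1}(y) = g_y^{-1}(y)$) using involutivity of $S$ alone, and then to prove Part 2 (the intertwining $f_x^{-1} \circ T = T \circ g_x$) using the braided relation together with one of the involutivity identities from Part 1.

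For Part 1, the starting point is the pair of ``integrability'' identities that fall out of $S\circ S = \mathrm{Id}$ applied to $(x,y)$:
\begin{equation*}
g_{g_x(y)}(f_y(x)) = x, \qquad f_{f_y(x)}(g_x(y)) = y, \qquad x,y \in X.
\end{equation*}
Set $U(y) := g_y^{-1}(y)$ and substitute $y = U(x)$ in the first identity, so that $g_x(y) = x$. It becomes $g_x(f_{U(x)}(x)) = x$, and injectivity of $g_x$ forces $f_{U(x)}(x) = U(x)$, which is exactly $T(U(x)) = x$. Since $X$ is finite, $T \circ U = \mathrm{Id}_X$ already implies that $T$ is a bijection with $T^{-1} = U$.

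For Part 2, I extract a braiding identity from $S^{12}S^{23}S^{12} = S^{23}S^{12}S^{23}$: equating third components of both sides applied to $(a,b,c) \in X^3$ yields
\begin{equation*}
f_c(f_b(a)) = f_{f_c(b)}\bigl(f_{g_b(c)}(a)\bigr).
\end{equation*}
To prove $T \circ g_x = f_x^{-1} \circ T$, it suffices to check the equivalent identity $f_y(f_x(T(g_x(y)))) = y$, since $T(g_x(y)) = f_{g_x(y)}^{-1}(g_x(y))$. Apply the braiding identity with $a := f_{g_x(y)}^{-1}(g_x(y))$, $b := x$, $c := y$: the left-hand side is precisely the quantity to be shown equal to $y$, while on the right the inner factor collapses to $f_{g_x(y)}(a) = g_x(y)$, leaving $f_{f_y(x)}(g_x(y))$. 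The second integrability identity from Part 1, evaluated at the same $x,y$, gives $f_{f_y(x)}(g_x(y)) = y$, closing the argument.

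The main obstacle is cosmetic: choosing the right substitution in the braiding identity so that one factor on the right-hand side cancels the internal $f_{g_x(y)}^{-1}$ produced by $T$ and the surviving term $f_{f_y(x)}(g_x(y))$ is exactly what the involutive identity of Part 1 evaluates. No further ingredients beyond involutivity, non-degeneracy, and one braiding equation are needed; the remainder is careful index bookkeeping among $f_y(x)$, $g_x(y)$, and $f_x^{-1}(T(y))$.
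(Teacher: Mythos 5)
Your argument is correct, but note that the paper itself offers no proof to compare against: Lemma \ref{lem_etingof_inverseT} is quoted from Etingof--Schedler--Soloviev, and the appendix only proves Lemmas \ref{lem_calcul_pi-1}, \ref{lem_form_frozen} and \ref{lem_frozen_act_trivially}. Checking your proof on its own terms: the two identities $g_{g_x(y)}(f_y(x))=x$ and $f_{f_y(x)}(g_x(y))=y$ are exactly what $S\circ S=\mathrm{Id}$ gives componentwise; substituting $y=g_x^{-1}(x)$ in the first and using injectivity of $g_x$ does yield $T(U(x))=x$ with $U(y)=g_y^{-1}(y)$; and the third-component consequence of the braid relation, $f_c(f_b(a)) = f_{f_c(b)}\bigl(f_{g_b(c)}(a)\bigr)$, is derived correctly, so the substitution $a=T(g_x(y))$, $b=x$, $c=y$ together with the second involutivity identity gives $f_y\bigl(f_x(T(g_x(y)))\bigr)=y$, which is equivalent to $T\circ g_x=f_x^{-1}\circ T$. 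Two small remarks: your appeal to finiteness of $X$ to upgrade $T\circ U=\mathrm{Id}$ to bijectivity is legitimate in this paper's setting (Section 1.2 assumes $|X|=n$ finite), but it can be avoided altogether, since substituting $x=T(y)$ in the second involutivity identity gives $g_{T(y)}(y)=T(y)$, hence $U\circ T=\mathrm{Id}$ as well; this symmetric argument is closer to the original source and makes the lemma independent of finiteness.
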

Note that inductively  $T^{m}(x)=f^{-1}_{T^{m-1}(x)}T^{m-1}(x)=f^{-1}_{T^{m-1}(x)}f^{-1}_{T^{m-2}(x)}..f^{-1}_{T(x)}f^{-1}_{x}(x)$.
\begin{lem}\label{lem_calcul_pi-1}
Let $\pi: G(X,S) \rightarrow \mathbb{Z}^{n}$ be the bijective $1$-cocycle defined in Prop. \ref{prop_etingof}$(iv)$. Let $x_i,x_j  \in X$. Let  $t_i$, $t_j$ be   generators of $\mathbb{Z}^{n}$ corresponding to $x_i$ and $x_j$ respectively. Let $u,v \in \mathbb{Z}^{n}$. Then \\
$(i)$ $\pi(x_i^{-1})=t^{-1}_{f^{-1}_i(i)}$\\
$(ii)$ $\pi^{-1}(t_j)=x_j$.\\
$(iii)$  $\pi^{-1}(t_it_j)=x_{f^{-1}_j(i)}x_j$.\\
$(iv)$ $\pi^{-1}(uv)=\pi^{-1}(\pi^{-1}(v)\bullet u)\,\pi^{-1}(v)$
\end{lem}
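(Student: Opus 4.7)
The plan is to derive all four identities from two ingredients that are already in hand: the defining value $\pi(x)=t_x$ for $x\in X$ and the twisted cocycle identity $\pi(a_1a_2)=(a_2^{-1}\bullet\pi(a_1))\,\pi(a_2)$ from Theorem \ref{prop_etingof}(iv), together with the explicit formula $a\bullet t_x=t_{f(x)}$ from Theorem \ref{prop_etingof}(ii) applied to $a=x_j$, which acts on $X$ as $f_j^{-1}$. Part (ii) is immediate from $\pi(x_j)=t_j$ and the bijectivity of $\pi$, so no real work is needed there.

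For part (i), I would apply the cocycle identity to the product $x_i\cdot x_i^{-1}=1$: since $\pi(1)=1$ and $\pi(x_i)=t_i$, one gets $1=(x_i\bullet t_i)\,\pi(x_i^{-1})$. Using that $x_i$ acts via $f_i^{-1}$, this evaluates to $1=t_{f_i^{-1}(i)}\,\pi(x_i^{-1})$, so $\pi(x_i^{-1})=t_{f_i^{-1}(i)}^{-1}$. For part (iii), I would reverse-engineer the pre-image of $t_it_j$ under $\pi$: set $a_2=x_j$, so $\pi(a_2)=t_j$, and look for $a_1$ with $(x_j^{-1}\bullet\pi(a_1))\,t_j=t_it_j$, i.e.\ $\pi(a_1)=x_j\bullet t_i=t_{f_j^{-1}(i)}$. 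Thus $a_1=x_{f_j^{-1}(i)}$ and $\pi^{-1}(t_it_j)=x_{f_j^{-1}(i)}\,x_j$.

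Part (iv) is the general pattern of the same argument. Given $u,v\in\mathbb{Z}^n$, I would set $a_2=\pi^{-1}(v)$ and $a_1=\pi^{-1}(\pi^{-1}(v)\bullet u)$, so that $\pi(a_2)=v$ and $\pi(a_1)=\pi^{-1}(v)\bullet u$. Plugging into the cocycle identity yields
\begin{equation*}
\pi(a_1a_2)=\bigl(a_2^{-1}\bullet\pi(a_1)\bigr)\,\pi(a_2)=\bigl(\pi^{-1}(v)^{-1}\bullet(\pi^{-1}(v)\bullet u)\bigr)\,v=uv,
\end{equation*}
where the last step uses that $\bullet$ is a left action. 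Applying $\pi^{-1}$ to both sides gives (iv); alternatively, (iv) can be seen as the universal statement of which (iii) is the special case $u=t_i$, $v=t_j$, and (i) is the special case obtained from $u\,u^{-1}=1$.

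The only real subtlety is book-keeping of the action: one must remember that $x_j$ acts on $X$ by $f_j^{-1}$ (not $f_j$), so that $x_j\bullet t_i=t_{f_j^{-1}(i)}$ while $x_j^{-1}\bullet t_i=t_{f_j(i)}$, and apply the twisted cocycle formula in the correct order. Once these conventions are fixed, none of the four identities requires more than a one-line application of the cocycle rule, which is why the author defers them to the appendix.
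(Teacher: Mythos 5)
Your proposal is correct and follows essentially the same route as the paper's appendix proof: (ii) from $\pi(x_j)=t_j$, (i) and (iii) by applying the cocycle identity $\pi(a_1a_2)=(a_2^{-1}\bullet\pi(a_1))\,\pi(a_2)$ with the convention that $x_j$ acts via $f_j^{-1}$, and (iv) by applying $\pi$ to the candidate product $\pi^{-1}(\pi^{-1}(v)\bullet u)\,\pi^{-1}(v)$ and using that $\bullet$ is a left action. The only cosmetic difference is that you spell out (i) via $x_i x_i^{-1}=1$ where the paper simply cites the definition of $\pi$ and $\pi(1)=1$.
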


\begin{defn}\label{defn_class}
Let $(X, S)$ be  a non-degenerate symmetric solution, with $X$ finite.\\
$(i)$  Let $x,y \in X$. We say $(X,S)$  \emph{satisfies the condition $C$}, if $f_x\circ f_y=Id_X$ and $g_x \circ g_y=Id_X$, whenever $S(x,y)=(x,y)$. The elements $xy$ and $yx$ are called \emph{frozen elements of length $2$}  \cite{chou_godel2}.\\
$(ii)$ We say $(X,S)$ is of \emph{class $m$}, if 
 $f_x\,f_{T(x)}\,f_{T^{2}(x)}\,...\,f_{T^{m-1}(x)}\,=\,Id_X$,  for all $x \in X$.
\end{defn}

\begin{rem}\label{remark_defn_class}
In  Defn. \ref{defn_class}$(ii)$, we use the terminology from \cite{deh_coxeterlike}.  The formulation is not the same as  Defn. 2.2\cite{deh_coxeterlike}, but it is equivalent to it. Note that satisfying condition $(C)$ is equivalent to being of class 2,  since  $f_x \circ f_y=Id_X$ implies $g_x \circ g_y=Id_X$ from Lemma \ref{lem_etingof_inverseT} and  whenever $S(x,y)=(x,y)$, $y=g_x^{-1}(x)=f_x^{-1}(x)$ and the condition $C$ can be rewritten as  $f_xf_{g_x^{-1}(x)}=f_xf_{T(x)}=Id_X$. Note also that being of class $m$ implies $T^m(x)=x$, for all $x \in X$.
\end{rem}

 \begin{ex} In Example \ref{exemple:exesolu_et_gars}, $T(x_1)=x_1,\  T(x_2)=x_2,\ T(x_3)=x_4,\ T(x_4)=x_3$, since we have the four following trivial relations: $x^{2}_{1}=x^{2}_{1},\,x^{2}_{2}=x^{2}_{2},\,x_{3}x_{4}=x_{3}x_{4},\,x_{4}x_{3}=x_{4}x_{3}$. The solution satisfies condition $(C)$, as $f_1^2=f_2^2=f_3f_4=Id_X$.           
   \end{ex}
    
 In \cite{chou_godel2}, the present author and E.Godelle show that  if $(X, S)$ is  a non-degenerate symmetric set-theoretical solution, with $\mid X \mid =n$, that satisfies the condition $C$, then there is a short exact sequence $1 \rightarrow N \rightarrow G(X,S)  \rightarrow W  \rightarrow 1$, where $N$ is a normal free abelian subgroup of $G(X,S)$  and $W$ is a finite group of order $2^n$. Moreover,  $N$ is freely generated by the $n$ frozen elements of $(X,S)$ of length two and $W$  is  a Coxeter-like group, that is a finite quotient that plays the role that Coxeter groups play for the finite-type Artin groups \cite{chou_godel2}. P.Dehornoy extends our result and using another terminology he shows  that the condition $C$ may be relaxed \cite{deh_coxeterlike}. Indeed, he shows that if $(X,S)$ is of class $m$, then for each $x \in X$ there is a chain of trivial relations of the form $xy_1=xy_1,\;y_1y_2=y_1y_2,\;y_2y_3=y_2y_3,..,y_{m-1}x=y_{m-1}x$, $y_i \in X$.   Here, we  call the element $y_1y_2y_3..y_{m-1}x$ the \emph{frozen element of length $m$ ending with $x$}. The subgroup $N$, generated by the $n$ frozen elements of length $m$,   $y_1y_2y_3..y_{m-1}x,\;xy_1y_2y_3..y_{m-1},\;.., y_2y_3..y_{m-1}xy_1$, is  normal, free abelian  of rank $n$ and the group $W$ defined by $G(X,S)/N$ is finite of order $m^n$ and plays the role that Coxeter groups play for finite-type Artin groups \cite{deh_coxeterlike}.

  \begin{ex} In Example \ref{exemple:exesolu_et_gars}, the normal subgroup $N$ is  generated by the four frozen elements of length two: $x_{1}^2,\; x_{2}^2,\;x_{3}x_{4},\;x_{4}x_{3}$; a presentation of the Coxeter-like group $W$ is obtained by adding the following four relations:  $x_{1}^2=1,\; x_{2}^2=1,\;x_{3}x_{4}=1,\;x_{4}x_{3}=1$ to the presentation of $G(X,S)$. 
  \end{ex}

  The following lemma is useful in the characterisation of the frozen 
 element of length $m$ ending with $x \in X$, where $m$ is the class of the solution.
  \begin{lem}\label{lem_form_frozen}
 Assume $(X,S)$ is of class $m$. Let $x \in X$.\\
  $(i)$  $S(x,T^{-1}(x))=(x,T^{-1}(x))$ and $S(T(x),x)=(T(x),x)$.\\
  $(ii)$  $T^{m-1}(x)\,T^{m-2}(x)\,...\,T(x)\,x$ is the  frozen of length $m$ ending with $x$.
   \end{lem}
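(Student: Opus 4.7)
The plan is to reduce both parts to properties of the map $T$ recalled in Lemma \ref{lem_etingof_inverseT}, together with the involutivity of $S$ and the bijectivity of each $f_x, g_x$.

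For part (i), I would start with the identity $T^{-1}(x) = g_x^{-1}(x)$ from Lemma \ref{lem_etingof_inverseT}. This gives immediately $g_x(T^{-1}(x)) = x$, so the first coordinate of $S(x,T^{-1}(x))$ is already $x$. To handle the second coordinate, write $S(x,T^{-1}(x)) = (x, u)$ with $u = f_{T^{-1}(x)}(x)$, and apply $S$ once more: since $S$ is involutive, $S(x,u) = (x, T^{-1}(x))$, so $g_x(u) = x$. Because $g_x$ is a bijection and $g_x(T^{-1}(x)) = x$ as well, one concludes $u = T^{-1}(x)$, as desired. The identity $S(T(x),x) = (T(x),x)$ is proved by the symmetric argument: $T(x) = f_x^{-1}(x)$ gives $f_x(T(x)) = x$, so $S(T(x),x) = (v, x)$ with $v = g_{T(x)}(x)$; involutivity then yields $f_x(v) = x$, and bijectivity of $f_x$ forces $v = T(x)$.

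For part (ii), I would write $w = T^{m-1}(x)\,T^{m-2}(x)\cdots T(x)\,x$ and verify that each pair of consecutive letters is a fixed point of $S$. A generic consecutive pair has the form $T^{k+1}(x)\,T^k(x) = T(z)\,z$ where $z = T^k(x)$, and by part (i) applied at $z$, $S(T(z),z) = (T(z),z)$. So every consecutive pair is trivial, which is the defining property of a chain of trivial relations. By Remark \ref{remark_defn_class}, the class hypothesis gives $T^m(x) = x$, so the pair wrapping around, namely $x\cdot T^{m-1}(x) = T(T^{m-1}(x))\cdot T^{m-1}(x)$, is also trivial by part (i); this matches the ``cyclic'' form of chain described by Dehornoy. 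Hence $w$ is a word of length $m$ whose $m$ cyclic consecutive pairs are all trivial relations, and ends with $x$, so $w$ is the frozen element of length $m$ ending with $x$.

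I do not expect any real obstacle: the key trick is already isolated in part (i), where involutivity plus bijectivity of $g_x$ (resp.\ $f_x$) is exactly what pins down the second coordinate. Part (ii) is then bookkeeping, with the closure of the chain ensured by the class-$m$ hypothesis $T^m = \mathrm{Id}_X$.
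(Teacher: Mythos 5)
Your proof is correct and follows essentially the same route as the paper: part (i) by computing the coordinate determined by $T(x)=f_x^{-1}(x)$ (resp.\ $T^{-1}(x)=g_x^{-1}(x)$) and pinning down the other coordinate via involutivity and non-degeneracy, and part (ii) by chaining the trivial relations $S(T(z),z)=(T(z),z)$ along the $T$-orbit and closing the chain with $T^m(x)=x$. Your write-up of (i) is merely a more explicit version of the paper's terse argument, so there is nothing to correct.
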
 
 In the following lemma, we give some computation rules for $\pi^{-1}$ and show the frozen elements act trivially on $\mathbb{Z}^{n}$. It means  that the normal subgroup $N$ is contained in the kernel of the left action of $G(X,S)$ on $X$ with the map  $x \mapsto f_x^{-1}$.
     \begin{lem}\label{lem_frozen_act_trivially}
    Assume $(X,S)$ is of class $m$.   Let $x \in X$. 
     Assume  $\pi(x)=t$. \\
     $(i)$   $\pi^{-1}(t^{-1})\,=\,(T^{m-1}(x))^{-1}\,=\,(T^{-1}(x))^{-1}\,=\,(g^{-1}_x(x))^{-1}$\\
     $(ii)$ For $\ell>0$, $\pi^{-1}(t^{\ell})=T^{\ell-1}(x)\,T^{\ell-2}(x)\,...\,T(x)\,x$.\\
       $(iii)$  $\pi^{-1}(t^{m})\,=\theta$, where $\theta$ is the frozen element of length $m$ ending with $x$.\\
        $(iv)$ $\pi^{-1}(t^{\pm m})\bullet w=w$, for all  $w \in \mathbb{Z}^{n}$.\\
        $(v)$  $\pi^{-1}(t^{-m})\,=\theta^{-1}$.\\
        $(vi)$  $\pi^{-1}(t_{i_1}^{\pm m}..t_{i_k}^{\pm m})\bullet w=w$, for all  $w \in \mathbb{Z}^{n}$.
      \end{lem}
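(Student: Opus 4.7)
The plan is to chain together the computation rules for $\pi$ and $\pi^{-1}$ in Lemma \ref{lem_calcul_pi-1}, the bijection $T$ of Lemma \ref{lem_etingof_inverseT}, and the class-$m$ hypothesis $f_x f_{T(x)}\cdots f_{T^{m-1}(x)}=Id_X$; part $(ii)$ is the workhorse from which the remaining items flow. For $(i)$, I would apply Lemma \ref{lem_calcul_pi-1}$(i)$, which yields $\pi(y^{-1})=t_{T(y)}^{-1}$ (since $f_y^{-1}(y)=T(y)$), and specialize to $y=T^{-1}(x)$ to get $\pi^{-1}(t^{-1})=(T^{-1}(x))^{-1}$. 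The remaining two descriptions follow from $T^{-1}=T^{m-1}$ (since $T^m=Id_X$ when $(X,S)$ is of class $m$, see Remark \ref{remark_defn_class}) and from the identity $T^{-1}(y)=g_y^{-1}(y)$ in Lemma \ref{lem_etingof_inverseT}.

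For $(ii)$ I would induct on $\ell\geq 1$, with base case $\ell=1$ being Lemma \ref{lem_calcul_pi-1}$(ii)$. For the inductive step, apply Lemma \ref{lem_calcul_pi-1}$(iv)$ to $t^{\ell+1}=t\cdot t^{\ell}$:
\[
\pi^{-1}(t^{\ell+1})=\pi^{-1}\bigl(\pi^{-1}(t^{\ell})\bullet t\bigr)\,\pi^{-1}(t^{\ell}).
\]
Using the induction hypothesis $\pi^{-1}(t^{\ell})=T^{\ell-1}(x)\cdots T(x)\,x$ together with the action formula $y\bullet t_z=t_{f_y^{-1}(z)}$ from Theorem \ref{prop_etingof}$(i)$-$(ii)$ and the fact that a left action composes as $(ab)\bullet w=a\bullet(b\bullet w)$, the computation telescopes through $f_{T^k(x)}^{-1}(T^k(x))=T^{k+1}(x)$ and yields $\pi^{-1}(t^{\ell})\bullet t=t_{T^{\ell}(x)}$. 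Invoking Lemma \ref{lem_calcul_pi-1}$(ii)$ once more then finishes the step.

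Parts $(iii)$--$(v)$ then follow quickly. Setting $\ell=m$ in $(ii)$ and identifying $T^{m-1}(x)\cdots T(x)\,x$ with the frozen element ending in $x$ via Lemma \ref{lem_form_frozen}$(ii)$ gives $(iii)$. For $(iv)$, the action of $\theta=\pi^{-1}(t^m)$ on a generator $t_y$ is $t_{(f_{T^{m-1}(x)}^{-1}\circ\cdots\circ f_x^{-1})(y)}$; the class-$m$ identity, rewritten as $(f_x f_{T(x)}\cdots f_{T^{m-1}(x)})^{-1}=Id_X$, makes this composite trivial, so $\theta$ and hence $\theta^{\pm 1}$ fix every generator. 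For $(v)$, applying the cocycle identity from Theorem \ref{prop_etingof}$(iv)$ to $\theta\cdot\theta^{-1}=1$ gives $(\theta\bullet\pi(\theta))\,\pi(\theta^{-1})=1$, and $\theta\bullet t^m=t^m$ by $(iv)$, so $\pi(\theta^{-1})=t^{-m}$.

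For $(vi)$ I would induct on $k$, applying Lemma \ref{lem_calcul_pi-1}$(iv)$ with $u=t_{i_1}^{\pm m}\cdots t_{i_{k-1}}^{\pm m}$ and $v=t_{i_k}^{\pm m}$. Since $\pi^{-1}(v)$ acts trivially by $(iv)$, we get $\pi^{-1}(v)\bullet u=u$ and therefore $\pi^{-1}(uv)=\pi^{-1}(u)\pi^{-1}(v)$; both factors act trivially (inductive hypothesis and $(iv)$), hence so does their product. The main obstacle is the bookkeeping in step $(ii)$: one must track which generator is being acted upon at each stage so that the telescoping really does collapse to $t_{T^\ell(x)}$. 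Once $(ii)$ is in hand, every subsequent item is a one-line consequence of the class-$m$ condition together with the cocycle identity or the bijectivity of $\pi$.
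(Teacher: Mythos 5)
Your proposal is correct and follows essentially the same route as the paper's appendix proof: the same computation rules from Lemma \ref{lem_calcul_pi-1}, the same inductions for $(ii)$ and $(vi)$, and the class-$m$ identity giving the trivial action in $(iv)$. The only cosmetic differences are that in $(ii)$ you peel the extra factor of $t$ off the other side (hence the telescoping $\pi^{-1}(t^{\ell})\bullet t=t_{T^{\ell}(x)}$ instead of applying the induction hypothesis to the generator $t_{T(x)}$), and that you obtain the $t^{-m}$ half of $(iv)$ via $(v)$ while the paper proves $(iv)$ directly and then deduces $(v)$; both orderings are sound since your proof of $(v)$ uses only the $t^{+m}$ case.
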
 
   
    \section{The connection between Garside groups and the QYBE}\label{Sec_background_garside}
    There exists a strong  connection between set-theoretical solutions and Garside groups \cite{chou_art,chou_godel,ddkgm,gateva}. Indeed, in \cite{chou_art}, we show there is a one-to-one correspondence between non-degenerate symmetric set-theoretical solutions of the quantum Yang-Baxter equation and Garside groups with a particular presentation (see Thm. \ref{thm__strgp_garside}). We first begin with  some basic background on Garside groups in Section \ref{sec_bcgd_gars} and then in Section   \ref{secQYBE_Garside} we present some prior results relating Garside groups and set-theoretical solutions of the quantum Yang-Baxter equation.  
     \subsection{Background on Garside monoids and Garside groups}
     \label{sec_bcgd_gars}
     Here, we recall some basic material on Garside theory, and refer to \cite{ddkgm}, \cite{DiM} for  more details.
     We start with some preliminaries. If $M$ is a monoid generated by a set $X$, and if $g\in M$ is the image of the word~$w$ by the canonical morphism from the free monoid on $X$ onto $M$, then we say  \emph{$w$ represents $g$}. A monoid~$M$ is \emph{cancellative} if for every~${e,f,g,h}$ in $M$, the equality~$efg = ehg$ implies~$f = h$. The element $f$ is a \emph{right divisor}  ({\it resp.} a \emph{left divisor}) of $g$ if there is an element $h$ in $M$ such that $g = hf$  ({\it resp.} $g=fh$).  The element $f$ is a \emph{a least common multiple w.r to right-divisibility (left lcm)}  of $g$ and  $h$ in $M$ if $g$ and $h$ are right divisors of $f$ and
          additionally  if there is an element $f'$ such that $g$ and $h$ are right divisors of $f'$, then
             $f$ is  right divisor of $f'$. We denote it by $f=g \vee_R h$.
            \emph{The complement at left of
             $g$ on $h$} is defined to be an element $c\in M$ such that $g\vee_R h= cg$, whenever $g\vee_R h$ exists. We denote it by
             $c=g \setminus h$. The right lcm and  complement at right are defined similarly and denoted respectively by $\vee_L$ and $\tilde{\setminus}$. A monoid~$M$  is left noetherian ({\it resp.}  \emph{right noetherian}) if every sequence~$(g_n)_{n\in\mathbb{N}}$ of elements of $M$ such that $g_{n+1}$ is a left divisor ({\it resp.} a right divisor) of $g_n$ stabilizes. It is noetherian if it is both left and right noetherian. An element~$\Delta$ is \emph{balanced} if it has the same set of right and left divisors. In this case, we denote by~$\Div{\Delta}$ its set of divisors. If~$M$ is a cancellative and noetherian monoid, then left and right divisibilities are partial orders on~$M$. 
             
             \begin{defn} $(1)$ A \emph{locally Garside monoid} $M$ is a cancellative noetherian monoid such that
               any two elements in $M$ have a lcm for left (and right) divisibility if and only if they have a common multiple for left (and right) divisibility. \\
               $(2)$ A \emph{Garside element} of  $M$ is a balanced element $\Delta$ with $\operatorname{Div}(\Delta)$ generating  $M$. \\
             $(3)$ A monoid $M$ is a \emph{Garside monoid} if $M$ is  locally Garside with a Garside element $\Delta$ satisfying $\operatorname{Div}(\Delta)$ is finite. A group is a \emph{Garside group} if it is the group of fractions of  a Garside monoid.
             \end{defn}
      
     Garside groups have been first introduced in~\cite{DePa}. The seminal examples are the braid groups and more generally the  Artin  groups of finite type.  Recall that an element $g\neq 1$ in a monoid is called an \emph{atom} if the equality $g = fh$ implies $f = 1$ or $h = 1$. From the defining properties of a Garside monoid, if $M$ is a  Garside monoid, then  $M$ is generated by its set of atoms, and every atom divides every Garside element. There is no invertible element, except the trivial one, and any two elements in $M$ have a left ({\it resp. right}) gcd and a left ({\it resp. right}) lcm; in particular, $M$ satisfies the Ore's conditions, so it embeds in its (left and right) group of fractions~\cite{Clifford}. The left and right gcd of two Garside elements are Garside elements and coincide; therefore, by the noetherianity property there exists a unique minimal Garside element for both left and right  divisibilities. This element~$\Delta$ will be called \emph{the} Garside element of the monoid.
    
     \begin{defn}\cite{periodic}\label{defn_periodic}
     Let $G$ be a Garside group with Garside element $\Delta$.\\
     $(i)$ An element $g \in G$ is said to be \emph{periodic}  if $g^q$ is conjugate to $\Delta^p$, where  $p\in \mathbb{Z}$,  $q\in \mathbb{Z}_{\geq 1}$, $p$ and $q$ are prime,
   and $q$ is the smallest positive integer such that   $g^q$ is conjugate to a power of  $\Delta$.\\
   $(ii)$ An element $g \in G$ is said to be \emph{primitive} if it is not a non-trivial power of another element, that is if $g=h^k$ with $k \in \mathbb{Z}$, then $k =\pm1$. 
     \end{defn}
     \begin{thm}\label{thm_algo_periodic}
     Let $G$ be a Garside group. \\
     $(i)$ \cite{sibert}There exists a finite-time algorithm that, given an element $g \in G$ and an integer $k \geq 1$, decides whether there exists $h \in G$ such that $h^k=g$ and then finds such an element $h$ if it exists.\\
     $(ii)$ \cite[Prop. 5.2]{periodic} There exists a finite-time algorithm that makes a list of primitive periodic elements, such that each primitive periodic element of $G$ is conjugate to either exactly one element in the list or its inverse.
     \end{thm}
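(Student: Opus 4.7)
The plan is to invoke the standard Garside-theoretic machinery — unique left-greedy normal forms, the invariants $\inf$ and $\sup$, the canonical length, and the finite super summit set $\operatorname{SSS}(g)$ — in order to reduce each assertion to a finite, effective search inside a computable subset of $\operatorname{Div}(\Delta^N)$ for some explicit $N$.

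For part $(i)$, the key observation is that if $h^k=g$ then the Garside invariants of $h$ are controlled by those of $g$. Writing $g$ in its left-greedy normal form $\Delta^p s_1 \cdots s_r$ one reads off $\inf(g)=p$ and $\sup(g)=p+r$, and a direct analysis of how normal forms behave under $k$-th powers shows that $\inf(h)$ and $\sup(h)$ lie in an explicitly bounded range. Transporting $g$ to its super summit set, which is finite and algorithmically constructible via iterated cycling and decycling, one argues that any $k$-th root $h$ must be conjugate to an element of canonical length bounded by a quantity depending only on $g$. Enumerating all such candidates in the appropriate finite subset of $G$ and testing the equation $h^k=g$ by normal-form computation then produces the required decision procedure, which is essentially Sibert's strategy.

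For part $(ii)$, I would reduce to part $(i)$. A primitive periodic $g$ satisfies $g^q$ conjugate to $\Delta^p$ with $\gcd(p,q)=1$ and $q$ minimal; since $\Delta^2$ is central in a Garside group, the conjugacy class of $\Delta^p$ is explicitly describable, and the invariants $\inf(g)$ and $\sup(g)$ of a periodic $g$ are controlled by $p$ and $q$. The finiteness of $\operatorname{Div}(\Delta)$ forces $p$ and $q$ into a finite, explicit range of admissible pairs. For each such pair, part $(i)$ extracts the $q$-th roots of representatives of the class of $\Delta^p$; the resulting finite list is pruned up to conjugacy using the standard super summit set conjugacy test, each element is grouped with its inverse, and primitivity is verified by reapplying part $(i)$ with $k$ ranging over the proper divisors of $q$.

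The main obstacle for both parts is the a priori bound on the location of a root: one must show that the existence of an $h$ with $h^k=g$ forces $h$ into an explicitly computable finite region of $G$. This requires a careful control of how the left-greedy normal form decomposes under $k$-th powers, together with a sharp analysis of the behaviour of $\inf$, $\sup$, and canonical length under cycling and decycling inside $\operatorname{SSS}(g)$ — a non-trivial technical statement whose full justification is precisely the content of the cited works \cite{sibert} and \cite{periodic}.
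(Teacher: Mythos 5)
This statement is not proved in the paper at all: it is quoted background, imported verbatim with the citations \cite{sibert} and \cite[Prop.~5.2]{periodic}, and is then used only once, in the proof of Theorem \ref{thm_indec_outer_aut}, to enumerate the roots of $\Delta$ in $G_3$. So there is no internal proof to compare yours against; in effect the paper's ``proof'' is the citation, and your proposal, which sketches the strategy of the cited works and explicitly defers the decisive technical step (the a priori bound forcing any $k$-th root into an explicitly computable finite region, via control of $\inf$, $\sup$ and canonical length under powers, cycling and decycling) to those same references, is in substance the same move the paper makes. Your outline is consistent with how Sibert's root-extraction algorithm and Lee--Lee's enumeration of primitive periodic elements actually proceed, with one correction you should make: it is not true that $\Delta^2$ is central in an arbitrary Garside group. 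Only some power $\Delta^e$ is central, where $e$ is the (finite) order of the conjugation automorphism by $\Delta$ --- this is exactly the normalization the paper itself recalls when discussing Picantin's result on $\Delta$-pure Garside monoids --- so the description of the conjugacy class of $\Delta^p$ and the bookkeeping of admissible pairs $(p,q)$ in your part $(ii)$ should be phrased in terms of $\Delta^e$ rather than $\Delta^2$. With that adjustment, your text is a faithful summary of the cited results, but it should be presented as such rather than as an independent proof, since the finiteness bounds you invoke are precisely what \cite{sibert} and \cite{periodic} establish.
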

       \subsection{Description of the Garside structure  specific to the  structure group}
       \label{secQYBE_Garside}
     In the following theorem, we describe the  one-to-one correspondence between non-degenerate symmetric set-theoretical solutions of the quantum Yang-Baxter equation and Garside groups with a particular presentation. 
         
      \begin{thm}\cite[Thm.1]{chou_art}\label{thm__strgp_garside}
      (i) Assume that  $\operatorname{Mon} \langle X\mid \Re \rangle$ is a  Garside monoid such that:
      \begin{enumerate}
      \item[(a)] the cardinality of $\Re$ is $n(n-1)/2$, where $n$ is the cardinality of $X$ and each side of a relation in $\Re$ has length 2 and
      \item[(b)] if the  word $x_{i}x_{j}$ appears in $\Re$, then it appears only once.
      \end{enumerate}
       Then, there exists a function $S: X \times X \rightarrow X \times X$ such that $(X,S)$ is  a non-degenerate symmetric set-theoretical solution and $\operatorname{Gp} \langle X\mid \Re \rangle$ is its structure group.\\
      (ii) For every non-degenerate symmetric set-theoretical solution~$(X,S)$, the structure group $G(X,S)$ is a Garside group, whose Garside monoid is as above.
      \end{thm}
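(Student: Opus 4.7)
The plan is to prove the two implications separately, exploiting the direct correspondence between the combinatorics of $S$ and the length-two defining relations.

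For direction (i), given the Garside monoid $M=\operatorname{Mon}\langle X\mid\Re\rangle$, I would define $S$ directly from $\Re$. Condition (b) guarantees that the $n(n-1)/2$ relations involve $n(n-1)$ distinct length-two words, leaving exactly $n$ words in $X\times X$ that do not appear in any relation. Setting $S(x_i,x_j)=(x_k,x_l)$ whenever $x_ix_j=x_kx_l$ occurs in $\Re$ (reading either side), and $S(x_i,x_j)=(x_i,x_j)$ for the $n$ leftover pairs, yields a well-defined involution. For non-degeneracy, I would fix $x_i$ and note that since the Garside element $\Delta$ is the right lcm of all atoms and has length $n$, for any $x_k\neq x_i$ the right lcm $x_i\vee_L x_k$ exists and divides $\Delta$; a double-count matching the $n(n-1)/2$ unordered pairs of distinct atoms with the $n(n-1)/2$ relations forces this lcm to have length exactly $2$. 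Writing it as $x_ix=x_kx'$ exhibits each such $x_k$ as $g_{x_i}(x)$ for a unique $x$, so $g_{x_i}$ is a bijection, and symmetrically $f_{x_j}$ is. For the braided property, the compositions $S^{12}S^{23}S^{12}$ and $S^{23}S^{12}S^{23}$ applied to $(x_i,x_j,x_k)$ both describe rewriting the length-three word $x_ix_jx_k$ in $M$ by alternate applications of relations; the cube condition intrinsic to any Garside monoid forces the two outputs to agree. Finally $G(X,S)=\operatorname{Gp}\langle X\mid\Re\rangle$ by construction of $S$.

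For direction (ii), I would form the candidate monoid $M(X,S)=\operatorname{Mon}\langle X\mid x_ix_j=g_{x_i}(x_j)f_{x_j}(x_i)\rangle$ and verify the Garside axioms. Homogeneity yields noetherianity. Bijectivity of $g_{x_i}$ and $f_{x_j}$ together with involutivity of $S$ let one define a complement function on atoms, and the braided condition encodes the cube condition on this complement; Dehornoy's word-reversing argument then delivers cancellativity together with the existence of length-two right lcms of every pair of distinct atoms. An induction on $|Y|$ for $Y\subseteq X$, using the cube condition at each step, produces the right lcm $\Delta$ of all $n$ atoms, necessarily of length exactly $n$ since each atom must appear in any reduced expression. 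Symmetric reasoning yields the same $\Delta$ as the left lcm, so $\Delta$ is balanced with $\operatorname{Div}(\Delta)$ finite, and Ore's theorem identifies $G(X,S)$ as the group of fractions of $M(X,S)$.

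The main obstacle I expect is cancellativity of $M(X,S)$ in direction (ii): the defining relations do not render it evident. The standard route is Dehornoy's reversing technique, which reduces cancellativity to verifying the cube condition on the atom complements. Expressing this condition in terms of $g$ and $f$ produces precisely the braid-type identities equivalent to $(X,S)$ being braided, so the work amounts to matching two descriptions of the same length-three combinatorial datum. Once cancellativity and pairwise length-two right lcms of atoms are established, producing $\Delta$ of length $n$ is a clean induction on $|X|$.
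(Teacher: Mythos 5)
The paper itself does not reprove this theorem; it quotes it from the cited source \cite[Thm.~1]{chou_art}, and your overall strategy (define $S$ from the relations and check the three properties; conversely, get cancellativity and lcms via a complemented presentation and the cube condition, then build $\Delta$ as the lcm of the atoms) is indeed the standard route of that proof. Within that route, however, two of your key steps are not justified as written. First, in direction (i), the braidedness argument is a non sequitur: $S^{12}S^{23}S^{12}(x_i,x_j,x_k)$ and $S^{23}S^{12}S^{23}(x_i,x_j,x_k)$ are both length-three words representing the element $x_ix_jx_k$ of $M$, but an element of $M$ has many length-three representatives, so "rewriting the same word" does not force the two triples to coincide, and the cube condition does not literally say that any two sequences of $S$-moves on a triple agree. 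What is actually needed is the translation step: the presentation is right-complemented (this uses the non-degeneracy you just established), completeness for reversing in a cancellative monoid with right lcms gives the cube condition on triples of atoms as identities for the iterated complement $x\setminus y$, and these identities must then be converted, using involutivity and non-degeneracy, into the Yang--Baxter identities for the maps $f,g$. That conversion is the real content of the braidedness proof and is missing from your sketch. (Your non-degeneracy count, by contrast, is essentially right, though you should make explicit that cancellativity plus homogeneity plus condition (b) force the map sending a relation to its unordered pair of initial letters to be injective, hence bijective by the count $n(n-1)/2$.)

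Second, in direction (ii), your justification that $\Delta$ has length exactly $n$ --- "each atom must appear in any reduced expression" --- is false: for $G_2=\langle x_1,x_2\mid x_1^2=x_2^2\rangle$ one has $\Delta=x_1x_1=x_2x_2$, and the representative $x_1x_1$ contains no $x_2$ although $x_2$ divides $\Delta$. The correct argument goes through the complement calculus: since both sides of every relation have length $2$, the right complement of an atom under an atom is a single atom, and one shows (using non-degeneracy and involutivity to rule out the complement becoming trivial) that the complement of an atom $x\notin Y$ under the right lcm of $Y$ is again an atom, so the lcm of $Y\cup\{x\}$ has length $|Y|+1$; this is exactly the statement that divisors of $\Delta$ correspond to subsets of $X$ (Prop.~\ref{prop_simple_lcm} in the present paper, again imported from \cite{chou_art}). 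Relatedly, asserting that "symmetric reasoning yields the same $\Delta$ as the left lcm, so $\Delta$ is balanced" hides two genuine lemmas: that the left lcm and right lcm of $X$ coincide, and that the left and right divisors of $\Delta$ agree, which again rests on the subset--divisor correspondence on both sides. With these steps supplied, your outline matches the known proof; without them, (i) lacks the braidedness verification and (ii) lacks the length-$n$ and balancedness claims that make $\Delta$ a Garside element.
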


      Moreover, the Garside structure of the structure groups has interesting additional properties described below.    
       \begin{prop}\cite{chou_art,chou_godel}\label{prop_simple_lcm}
       $(i)$ Let $x,y \in X$, $x \neq y$. Then $x\setminus y=f_{x}^{-1}(y)$.\\
       (ii) The Garside element~$\Delta$ is the lcm of $X$ for both left and right divisibilities.\\
       (iii) Let $s$ belong to $M$. Then, \begin{center}\begin{tabular}{ll}&$s$ belongs to~$\Div{\Delta}$\\  $\iff$&$\exists X_{\ell}\subseteq X$ such that~$s$ is the  lcm w.r to left-divisibility of~$X_{\ell}$\\$\iff$& $\exists X_{r}\subseteq X$ such that~$s$ is the  lcm w.r to right-divisibility of~$X_r$.\end{tabular}\end{center}(iv) If~$s$ belongs to $\Div{\Delta}$ then the subsets $X_{\ell}$ and $X_r$ defined in Point~(ii) are unique and have the same cardinality.\\
       
       \end{prop}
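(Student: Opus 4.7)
For part (i), I would directly apply the defining relation $ab = g_a(b) f_b(a)$ with $a = f_x^{-1}(y)$ and $b = x$. Since $f_b(a) = f_x(f_x^{-1}(y)) = y$, this gives
\[
f_x^{-1}(y) \cdot x \;=\; g_{f_x^{-1}(y)}(x) \cdot y,
\]
exhibiting a length-two common right multiple of $x$ and $y$. Because $x \neq y$, any common right multiple has length at least two, and the hypothesis in Theorem~\ref{thm__strgp_garside} that each word $x_ix_j$ appears at most once in the defining relations ensures uniqueness of the length-two multiple. Hence $x \vee_R y = f_x^{-1}(y) \cdot x$, and $x \setminus y = f_x^{-1}(y)$ reads off the defining equation $x \vee_R y = (x \setminus y)\, x$.

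For part (ii), every atom divides the Garside element, so $\Delta$ is a common multiple of $X$ on both sides. To establish minimality I would build $\vee_L(X)$ iteratively using part (i): starting from $x_1$, at step $k{+}1$ compute $\vee_L(x_1,\ldots,x_k) \vee_R x_{k+1}$, which increases length by exactly one (the new atom being distinct from the existing right divisors and the defining relations having length two). After $n$ steps, $\vee_L(X)$ has length $n$; since $\Delta$ has length $n$ in the presentation of Theorem~\ref{thm__strgp_garside} and $\vee_L(X) \leq_L \Delta$, cancellativity in the homogeneous monoid forces equality. The symmetric computation gives $\vee_R(X) = \Delta$.

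For parts (iii) and (iv), given $s \in \Div{\Delta}$, I would define $X_\ell := \{x \in X : x \leq_L s\}$. Uniqueness in (iv) is automatic since $X_\ell$ is intrinsically determined by $s$. The inclusion $\vee_L(X_\ell) \leq_L s$ is immediate from the definition of $X_\ell$, so the content is the reverse. I would induct on $\operatorname{length}(s)$: writing $s = x \cdot s_1$ with $x \in X_\ell$ and observing $s_1 \in \Div{\Delta}$ (as $\Delta = x \cdot \Delta_x$ with $s_1 \leq_L \Delta_x$), apply the inductive hypothesis to obtain $s_1 = \vee_L(Y)$ for some $Y \subseteq X$, and then use part (i) to verify that prepending $x$ produces the right lcm of the appropriate subset of $X$. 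A symmetric argument yields $X_r$; then cardinality matching $|X_\ell| = |X_r| = \operatorname{length}(s)$ follows from the same induction. The converse direction of (iii) is immediate, since any $\vee_L(Y)$ left-divides $\Delta$ by (ii).

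The principal obstacle is the inductive step in (iii): propagating the ``right lcm of a subset of $X$'' property through multiplication on the left by an atom, without picking up ``hidden'' atoms that fail to appear among the left divisors of $s$. This propagation relies on the special structure of these Garside monoids (where the left divisors of a simple element are in bijection with subsets of $X$ of cardinality equal to the length); in a generic Garside monoid, the conclusion of (iii) can fail.
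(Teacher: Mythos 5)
First, a caveat on the comparison: the paper does not prove Proposition \ref{prop_simple_lcm} at all --- it is imported verbatim from \cite{chou_art,chou_godel} --- so your attempt can only be measured against the content of those references, where the statement is established using the specific Yang--Baxter structure (non-degeneracy, involutivity/braidedness, and a completeness/confluence analysis of the quadratic presentation). Your treatment of part (i) is essentially sound: the defining relation $f_x^{-1}(y)\,x=g_{f_x^{-1}(y)}(x)\,y$ exhibits a length-two common left multiple of $x$ and $y$, homogeneity plus the ``each word $x_ix_j$ occurs at most once'' condition gives its uniqueness in length two, and since the lcm exists in a Garside monoid and right-divides this element, it must coincide with it; reading off $x\vee_R y=(x\setminus y)\,x$ then gives $x\setminus y=f_x^{-1}(y)$.

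For parts (ii)--(iv), however, there is a genuine gap, and it sits exactly where you flag it. Your induction for (ii) rests on the claim that adjoining a new atom to $\{x_1,\dots,x_k\}$ increases the length of the lcm by exactly one, and your induction for (iii) rests on the claim that prepending an atom to a right lcm of a subset of $X$ again yields the right lcm of a subset of one larger cardinality. Neither claim follows from the ingredients you actually invoke (homogeneity, quadratic relations, uniqueness of pairwise length-two lcms): pairwise lcm behaviour does not control lcms of triples in a general homogeneous quadratic Garside monoid, and the justification you offer --- ``the left divisors of a simple element are in bijection with subsets of $X$ of cardinality equal to the length'' --- is precisely the content of (iii)--(iv), so the argument is circular. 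The same circularity appears in (ii) when you take ``$\Delta$ has length $n$'' as an input: that $\Delta$ is the two-sided lcm of $X$ and has length $n$ is the statement being proved (and note that in a general Garside monoid the minimal Garside element need not equal the lcm of the atoms, nor need that lcm be balanced --- balancedness of $\vee_R(X)=\vee_L(X)$ is part of what must be shown). In \cite{chou_art} these steps are carried out by exploiting the non-degeneracy and involutivity of $S$ (via the associated complement/rewriting structure), not merely the shape of the presentation; some such input is indispensable, and it is exactly what is missing from your sketch. A minor additional point: you conflate $\vee_L$ and $\vee_R$ in the iterative construction in (ii); with the paper's conventions ($\vee_R$ = lcm with respect to right-divisibility) the iteration should stay on one side throughout.
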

           Note that for  ~$s \in \Div{\Delta}$,  ~$X_{\ell}=X_{r}$ if and only if $s$ is balanced.  In \cite{chou_godel2}, the present author and E.Godelle show that  if $(X, S)$ is  a non-degenerate symmetric set-theoretical solution, with $\mid X \mid =n$, that satisfies the condition $C$, then there is a normal free abelian subgroup $N$ of $G(X,S)$ such that $G(X,S)/N$  is  a finite quotient group  that plays the role that Coxeter groups play for the finite-type Artin groups (Coxeter-like group), that is there is a bijection between the elements in $G(X,S)/N$ and $\Div{\Delta}$ \cite{chou_godel2}.  P.Dehornoy extends our result and  shows  that if $(X,S)$ is of class $m$, then there is a bijection between the elements in $G(X,S)/N$ and $\Div{\Delta^{m-1}}$ \cite{deh_coxeterlike}.
       
    M.Picantin defines  the notion of  a $\Delta$-pure Garside group. A Garside monoid $M$ is  $\Delta$\verb`-`\emph{pure} if for every $x,y$ in $X$, it holds that $\Delta_{x} = \Delta_{y}$, where $\Delta_{x} = \vee_L \{b \setminus x ; b \in M\}$. He shows if  $G$ is  the group of fraction of a $\Delta$-pure Garside monoid with exponent $e$, then the centre of $G$ is  infinite cyclic generated by $\Delta^{e}$, where $e$ is the order of the conjugation automorphism by $\Delta$  \cite{picantin}. We show the structure group of $(X,S)$ is   $\Delta$-pure Garside  if and only if $(X,S)$ is indecomposable  \cite{chou_art}. So, if $(X,S)$ is indecomposable,   $Z(G)=\langle \Delta^{e} \rangle$ and if it is decomposable, $Z(G)$ is the direct product of cyclic groups. 
      \begin{ex}\label{ex_center_G_3}
              The solution $(X,S)$ given in Ex.\ref{exemple:exesolu_et_gars} is an indecomposable solution. So, the monoid with the same presentation as $G(X,S)$ is a  $\Delta$-pure Garside  monoid and $Z(G(X,S))=\langle \Delta \rangle$, where $\Delta=(x_1x_3)^2=(x_2x_4)^2=(x_3x_2)^2=(x_4x_1)^2$ is the left and right lcm of $X$. For $G_2$ and $G_3$, the structure groups of the (unique) indecomposable solutions for $\mid X \mid =2$ and $\mid X \mid =3$ respectively, the centre is infinite cyclic generated by the Garside element $\Delta$. We have   $Z(G_2)=\langle \Delta_2 \rangle$, where $\Delta_2=x_1^2=x_2^2$ and  $Z(G_3)=\langle \Delta_3 \rangle$, where $\Delta_3=x_1^3=x_2^3=x_3^3$. Using that, we recover the result  $\operatorname{Inn}(G_2)=\mathbb{Z}_2 * \mathbb{Z}_2$ \cite{karass_piet}.
              \end{ex}
     \section{Construction of a group of automorphisms of the structure group}
 \label{sec_mainResult}
\subsection{Description of the construction of the group of automorphisms $\Phi_\pi$}\label{subsec_subg_aut}
 
  Let $(X,S)$ be  a non-degenerate symmetric  solution of the quantum Yang-Baxter equation and $G(X,S)$ be its structure group.  Assume  $\mid X \mid=n$.  Let $\pi: G(X,S) \rightarrow \mathbb{Z}^{n}$, be the bijective $1$-cocycle defined in Prop. \ref{prop_etingof}. Let $\sigma \in GL_n (\mathbb{Z})=\operatorname{Aut}(\mathbb{Z}^{n})$. Depending on the context, $\sigma$ denotes a map in  $\operatorname{Aut}(\mathbb{Z}^{n})$ or a matrix in $GL_n (\mathbb{Z})$.  We define a  map $\varphi: G(X,S) \rightarrow G(X,S)$ in the the following way:
\begin{equation} \varphi: G(X,S) \rightarrow G(X,S), \;\;\;\varphi(a)= \pi^{-1} \circ \sigma \circ \pi(a) \end{equation}
 That is, $\varphi$ is defined such that the following diagram is commutative:\\
\begin{center} \begin{tikzpicture}\label{diagram}
   \matrix (m) [matrix of math nodes,row sep=3em,column sep=4em,minimum width=2em]
   {
      G(X,S) & \mathbb{Z}^{n} \\
      G(X,S) & \mathbb{Z}^{n} \\};
   \path[-stealth]
     (m-1-1) edge node [left] {$\varphi$} (m-2-1)
             edge [double] node [above] {$\pi$} (m-1-2)
     (m-2-1.east|-m-2-2) edge [double]
             node [above] {$\pi$} (m-2-2)
     (m-1-2) edge node [right] {$\sigma$} (m-2-2)
             ;
 \end{tikzpicture} \end{center}
     As $\varphi$ is a bijection of $G(X,S)$,  the question whether $\varphi$ is an automorphism of $G(X,S)$ reduces to the question whether  $\varphi$ is a homomorphism of $G(X,S)$. More generally,  an epimorphism of $G(X,S)$ is necessarily an automorphism of $G(X,S)$, as  $G(X,S)$ is Hopfian. But,  a monomorphism of $G(X,S)$ may not be an automorphism of $G(X,S)$, as  $G(X,S)$ is co-Hopfian.  We give the proof for completion.
 \begin{prop}\label{prop_QYBE_residually_finite}  The group $G(X,S)$ is residually finite, so $G(X,S)$ is Hopfian. Furthermore,   $G(X,S)$ is co-Hopfian.
 \end{prop}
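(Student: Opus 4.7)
The plan addresses the three claims in sequence.

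For residual finiteness, the key is the monomorphism $\phi_f : G(X,S) \hookrightarrow \operatorname{Sym}(X) \ltimes \mathbb{Z}^{X}$ from Theorem~\ref{prop_etingof}(iii). The ambient group contains the finitely generated abelian group $\mathbb{Z}^{X}$ as a normal subgroup of index $n!$; finitely generated abelian groups are residually finite (each nontrivial element is detected by reducing modulo a sufficiently deep characteristic subgroup), and residual finiteness passes through finite extensions (by intersecting preimages of finite-index characteristic subgroups of the kernel). Residual finiteness is inherited by subgroups, so $G(X,S)$ is itself residually finite. The Hopfian conclusion is then immediate from Malcev's theorem: every finitely generated residually finite group is Hopfian, and $G(X,S)$ is generated by the finite set $X$.

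For the co-Hopfian claim, let $\psi : G(X,S) \to G(X,S)$ be an injective endomorphism. My first task is to show $\psi$ has finite-index image. Since $G(X,S)$ contains the normal free abelian subgroup $N$ of rank $n$ with finite quotient of order $m^{n}$, I will set $M = N \cap \psi^{-1}(N)$; this is a sublattice of $N \cong \mathbb{Z}^{n}$ that is of finite index in $G(X,S)$. The restriction $\psi|_M : M \to N$ is an injective homomorphism between free abelian groups of rank $n$, so by elementary lattice theory its image has finite index in $N$. Multiplying the finite indices $[G(X,S):N]$, $[N:M]$, and $[N:\psi(M)]$ yields $[G(X,S) : \psi(G(X,S))] < \infty$.

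The main obstacle is then to promote the finite-index embedding to full surjection. Here I plan to invoke the Garside rigidity intrinsic to the structure group. The centre $Z(G(X,S))$ is $\langle \Delta^{e} \rangle$ in the indecomposable case (Example~\ref{ex_center_G_3}), or a corresponding direct product of cyclic factors in the decomposable case, and $\psi$ must send it into the centraliser of the finite-index image $\psi(G(X,S))$, which coincides with $Z(G(X,S))$ up to finite commensurability. Combining the explicit formulas for $\pi$ and $\pi^{-1}$ in Lemma~\ref{lem_calcul_pi-1} with the triviality of the action of frozen elements in Lemma~\ref{lem_frozen_act_trivially}, the plan is to show that $\psi$ must permute the frozen generators of $N$ up to units, giving $\psi(N) = N$; then the induced endomorphism of the finite quotient $W = G(X,S)/N$ is injective, hence bijective, and together with surjectivity on $N$ this forces $\psi$ to be surjective on $G(X,S)$.
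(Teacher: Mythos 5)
Your treatment of the first two claims is fine and is essentially the paper's own argument: the paper also realizes $G(X,S)$ inside a finitely generated virtually abelian group (it quotes $G(X,S)=\mathbb{Z}^{n}\rtimes H$ with $H\leq S_n$, which is the image of the embedding $\phi_f$), deduces residual finiteness, and then cites Malcev for Hopficity.

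The co-Hopfian part is where the proposal breaks down, and the problem is not a repairable detail but the direction of the statement. In this paper ``co-Hopfian'' is used in a non-standard way: the sentence immediately preceding the proposition reads ``a monomorphism of $G(X,S)$ may not be an automorphism of $G(X,S)$, as $G(X,S)$ is co-Hopfian'', and the paper's proof accordingly takes $\sigma:\mathbb{Z}^{n}\rightarrow\mathbb{Z}^{n}$ with $\det\sigma\neq 0,\pm 1$ and observes that when $\varphi=\pi^{-1}\circ\sigma\circ\pi$ is a homomorphism it is a monomorphism of $G(X,S)$ that is not surjective. So the intended content is the \emph{existence} of injective non-surjective endomorphisms, the opposite of what you set out to prove. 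Moreover, the statement you are proving (every injective endomorphism is onto) is genuinely false for these groups: $G(X,S)$ contains the finite-index lattice $N\cong\mathbb{Z}^{n}$ and, like $\mathbb{Z}^{n}$ itself, admits proper self-embeddings of finite index. A concrete counterexample is $G_2=\langle x_1,x_2\mid x_1^{2}=x_2^{2}\rangle$ (the Klein bottle group, with $\Delta=x_1^{2}$ central): the assignment $x_i\mapsto x_i^{3}=x_i\Delta$ respects the defining relation, is injective, and has image of index $3$. This pinpoints the failing step in your plan: nothing forces $\psi$ to ``permute the frozen generators of $N$ up to units''; in the example above $\psi$ maps $N=\langle x_1x_2,\,x_2x_1\rangle$ onto the index-$3$ sublattice spanned by $x_1x_2\Delta^{2}$ and $x_2x_1\Delta^{2}$, so $\psi(N)\neq N$ and no bijective induced map on $W=G(X,S)/N$ is available. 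Your finite-index computation via $M=N\cap\psi^{-1}(N)$ is correct, but it only yields $[G(X,S):\psi(G(X,S))]<\infty$, which is exactly where a non-co-Hopfian (in the standard sense) group can, and here does, stop; the ``promotion to surjectivity'' cannot be carried out.
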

 \begin{proof}
The structure group  $G(X,S)=\, \mathbb{Z}^{n} \rtimes H$, where $\mathbb{Z}^{n}$ is   the free abelian group of rank $n$ and $H$ is a subgroup of $S_n$ the symmetric group of $n$ elements \cite{etingof}. Both $\mathbb{Z}^{n}$ and $H$ are residually finite, so $G(X,S)$ is residually finite \cite{johnson}. This implies further that $G(X,S)$ is Hopfian \cite{malcev,baumslag}. Let $\sigma: \mathbb{Z}^{n} \rightarrow \mathbb{Z}^{n}$, such that $\operatorname{det}(\sigma) \neq 0,\pm 1$. Then $\sigma$ is a monomorphism that is not surjective. Let $\varphi:G(X,S) \rightarrow G(X,S)$ be defined by  $\varphi= \pi^{-1} \circ \sigma \circ \pi$. Assume $\varphi$ is a homomorphism of $G(X,S)$. So,  $\varphi$ is a monomorphism that is not necessarily surjective. That is, $G(X,S)$ is co-Hopfian.
 \end{proof}
 
 We recall some notation from Section \ref{sec_QYBE}. The function $S$ is defined by $S(x_i,x_j)=(x_{g_{i}(j)},x_{f_{j}(i)})$, where $f_i,g_i:X\to X$ are bijections. 
 The bijective $1$-cocycle $\pi: G(X,S) \rightarrow \mathbb{Z}^{n}$ is defined by  $\pi(x_i)= t_i$, where $x_i \in X$, $t_i$ is  the generator of $\mathbb{Z}^{n}$ corresponding to $x_i$ and $\pi$ satisfies $\pi(a_1a_2)=(a_2^{-1}\bullet \pi(a_1))\pi(a_2)$, for $a_1,a_2 \in G(X,S)$, where  $\bullet$ is the extension of the left action $x \mapsto f_x^{-1}$  of $G(X,S)$ to $\mathbb{Z}^{n}$. Given $\sigma \in GL_n (\mathbb{Z})$, we denote by $(w_i)$ the $i$-th column of $\sigma$ and by $w_i$ the element in $\mathbb{Z}^{n}$ representing $(w_i)$ written multiplicatively, that is \begin{equation*}
 w_i=\sigma(t_i)= t_1^{\sigma_{1,i}}..t_n^{\sigma_{n,i}} =\prod_{j=1}^{j=n} t_j^{\sigma_{j,i}}
 \end{equation*}
 \begin{ex}\label{exemple_sigma_not_working}
 We consider Ex. \ref{exemple:exesolu_et_gars}. Let $\sigma \in GL_n (\mathbb{Z})$ be:
  \[ \left( \begin{array}{cccc}
- 4 & -3 & -4 &0 \\
 1 & 1 & 2 & 0\\
  2 & 1 & 1 & 0\\
 0 & 0 & 0 &  -1\end{array} \right)\] 
 We compute the function  $\varphi=\pi^{-1} \circ \sigma \circ \pi$, using Lemma \ref{lem_calcul_pi-1} and Lemma \ref{lem_frozen_act_trivially}: 
 $\varphi(x_1)=\pi^{-1}(w_1)=\pi^{-1}(t_2t_3^{2}t_1^{-4})=x_2(x_4x_3)(x_1^{-4})$,  $\varphi(x_2)=\pi^{-1}(w_2)=\pi^{-1}(t_3t_2t_1^{-1}t_1^{-2})=x_2x_4x_{g_1^{-1}(1)}^{-1}(x_1^{-2})=x_2x_4(x_1^{-3})$,  $\varphi(x_3)=\pi^{-1}(w_3)=\pi^{-1}(t_3t_2^{2}t_1^{-4})=x_3(x_2^{2})(x_1^{-4})$ and  $\varphi(x_4)=\pi^{-1}(t_4^{-1})=x_{g^{-1}_4(4)}^{-1}=x_3^{-1}$. From Prop. \ref{prop_etingof} and Lemma \ref{lem_calcul_pi-1},  the element  $\varphi(x_1)=x_2(x_4x_3)(x_1^{-4})$  acts on the left on $\mathbb{Z}^{n}$ via the permutation  $f_2^{-1}f_4^{-1}f_3^{-1}f_1^{4}=\,f_{2}^{-1}$. The permutations corresponding to $\pi^{-1}(w_2)$, $\pi^{-1}(w_3)$ and $\pi^{-1}(w_4)$ are respectively $f_3$, $f_3^{-1}$ and $f_3$.
 \end{ex}
  \begin{prop}\label{prop_phi_hom'}
  Let  $\sigma \in GL_n (\mathbb{Z})$ and let $\varphi$ be the bijection defined by $\sigma$. Let $1\leq k\leq n$. Let  $(w_k)$ denote the $k$-th column of $\sigma$ and let  $w_k$ be the element in $\mathbb{Z}^{n}$ representing $(w_k)$. Then   
  \begin{equation}\label{equation_hom_action}
   \varphi \in \operatorname{Aut}(G(X,S)) \;
     	\Leftrightarrow	 \; (\pi^{-1}(w_j))^{-1}\bullet w_i=w_{f_j(i)},\; \forall 1 \leq i, j\leq n
     \end{equation}
 That is, $\varphi$ is a homomorphism of groups if and only if  $\sigma(x_j^{-1}\bullet t_i)=(\varphi(x_j))^{-1}\bullet \sigma(t_i)$.
  \end{prop}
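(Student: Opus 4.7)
The plan is to handle the forward direction by a direct cocycle calculation on generators, and then for the backward direction to bootstrap from generators of $G(X,S)$ to arbitrary elements using the $1$-cocycle structure.

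For the forward direction, assume $\varphi$ is a homomorphism and compare $\varphi(x_ix_j)$ with $\varphi(x_i)\varphi(x_j)$ after applying $\pi$. Using the cocycle identity $\pi(a_1a_2)=(a_2^{-1}\bullet \pi(a_1))\pi(a_2)$, the relation $\pi\varphi=\sigma\pi$, the formula $x_j^{-1}\bullet t_i=t_{f_j(i)}$ (from Thm.\ \ref{prop_etingof}$(ii)$), and $\sigma(t_k)=w_k$, both sides expand to equations in $\mathbb{Z}^n$; cancelling the common right factor $w_j$ yields precisely $(\pi^{-1}(w_j))^{-1}\bullet w_i=w_{f_j(i)}$.

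For the backward direction, the first observation is that both sides of $\sigma(x_j^{-1}\bullet v)=(\pi^{-1}(w_j))^{-1}\bullet\sigma(v)$ are group homomorphisms of $\mathbb{Z}^n$ in the variable $v$, so the assumed equality on the generators $t_i$ upgrades to an equality for every $v\in\mathbb{Z}^n$. Inverting this identity of automorphisms of $\mathbb{Z}^n$ produces the dual identity $\sigma(x_j\bullet v)=\pi^{-1}(w_j)\bullet\sigma(v)$, needed for the inverse-generator case below.

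Next, I show $\varphi(ay)=\varphi(a)\varphi(y)$ for every $a\in G(X,S)$ and every letter $y\in X\cup X^{-1}$. For $y=x_j$, expanding $\pi(\varphi(ax_j))=\sigma(\pi(ax_j))$ via the cocycle and the upgraded condition with $v=\pi(a)$, and expanding $\pi(\varphi(a)\varphi(x_j))$ via the cocycle, produces the same expression $\bigl(\pi^{-1}(w_j)^{-1}\bullet\sigma(\pi(a))\bigr)w_j$. For $y=x_j^{-1}$ the same template works once one knows $\varphi(x_j^{-1})=\varphi(x_j)^{-1}$; applying the cocycle formula $\pi(g^{-1})=g\bullet\pi(g)^{-1}$ to $g=\varphi(x_j)$ and comparing with $\sigma(\pi(x_j^{-1}))=w_{T(j)}^{-1}$ (Lem.\ \ref{lem_calcul_pi-1}$(i)$) reduces the required equality to $\pi^{-1}(w_j)\bullet w_j=w_{T(j)}$, which the dual condition supplies at $v=t_j$ since $x_j\bullet t_j=t_{T(j)}$.

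A routine induction on the length of a word for $b$ in the alphabet $X\cup X^{-1}$, iterating the single-letter identity just established, then gives $\varphi(ab)=\varphi(a)\varphi(b)$ for all $a,b\in G(X,S)$; since $\varphi$ is already a bijection, this makes $\varphi$ an automorphism. The main subtlety will be the inverse-generator step, where one must simultaneously deploy the dual equivariance, the inversion formula for $\pi$, and the map $T$, and keep track of the asymmetric nature of the cocycle identity.
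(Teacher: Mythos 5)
Your argument is correct, and your forward direction is exactly the paper's computation: apply $\pi$ to $\varphi(x_ix_j)$ and $\varphi(x_i)\varphi(x_j)$, use $\pi\varphi=\sigma\pi$, the cocycle identity and $\pi(x_ix_j)=t_{f_j(i)}t_j$, and cancel the common factor $w_j$. Where you diverge is the converse. The paper handles both directions at once by asserting that $\varphi$ is a homomorphism if and only if $\varphi(x_ix_j)=\varphi(x_i)\varphi(x_j)$ for all generator pairs, justified by the shape of the defining relations; strictly speaking this shows the assignment $x_i\mapsto\varphi(x_i)$ extends to a homomorphism, and the identification of that homomorphism with the given bijection $\varphi$ on all of $G(X,S)$ is left implicit. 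You instead bootstrap explicitly: since $v\mapsto\sigma(x_j^{-1}\bullet v)$ and $v\mapsto(\pi^{-1}(w_j))^{-1}\bullet\sigma(v)$ are automorphisms of $\mathbb{Z}^n$, the hypothesis on the $t_i$ upgrades to all of $\mathbb{Z}^n$, and inverting gives the dual identity $\sigma(x_j\bullet v)=\pi^{-1}(w_j)\bullet\sigma(v)$; the one-letter steps $\varphi(ax_j)=\varphi(a)\varphi(x_j)$ and $\varphi(ax_j^{-1})=\varphi(a)\varphi(x_j^{-1})$ then follow from the cocycle identity, with the inverse-letter case correctly reduced (via $\pi(g^{-1})=g\bullet\pi(g)^{-1}$, Lemma \ref{lem_calcul_pi-1}$(i)$ and $x_j\bullet t_j=t_{T(j)}$) to $\pi^{-1}(w_j)\bullet w_j=w_{T(j)}$, which is the dual identity at $t_j$; induction on word length finishes it, and bijectivity plus Hopficity are not even needed since you prove full multiplicativity directly. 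I checked these computations and they are all consistent with Theorem \ref{prop_etingof} and Lemma \ref{lem_calcul_pi-1}. The trade-off: the paper's route is shorter because it leans on the presentation of $G(X,S)$, while yours is longer but self-contained and verifies multiplicativity of the actual map $\varphi$ on all elements, filling in the step the paper's one-line reduction glosses over.
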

  \begin{proof}
The defining relations in $G(X,S)$ have the form: $x_i\,x_j =x_{g_{i}(j)}\,x_{f_{j}(i))}$, with $x_i,x_j \in X$. So, $\varphi$  is a homomorphism of $G(X,S)$ if and only if $\varphi(x_i\,x_j)=\varphi(x_i)\,\varphi(x_j)$, $\forall x_i,x_j \in X$. The map $\pi: G(X,S) \rightarrow \mathbb{Z}^{n}$ is bijective, so $\varphi(x_i\,x_j)=\varphi(x_i)\,\varphi(x_j)$  in $G(X,S)$  if and only if $\pi(\varphi(x_i\,x_j))=\pi(\varphi(x_i)\,\varphi(x_j))$ in $\mathbb{Z}^{n}$.
From commutativity of the diagram \ref{diagram},  $\pi(\varphi(x_i\,x_j))=\sigma \circ \pi(x_i\,x_j)$. From the 
definition of $\pi$, $\pi(x_i\,x_j)=t_{f_j(i)}\,t_j$, so 
$\pi(\varphi(x_i\,x_j))\,=\,\sigma \circ \pi(x_i\,x_j)\,=\, \sigma(t_{f_j(i)}\,t_j)\,=\, \sigma(t_{f_j(i)})\;\sigma(t_j)\,=\, w_{f_j(i)}\;w_j$. On the other hand,  $\pi(\varphi(x_i)\,\varphi(x_j))\,=\,\pi(\pi^{-1}(w_i)\,\pi^{-1}(w_j))\,=$\\ $(\,(\pi^{-1}(w_j))^{-1} \,\bullet\,\pi(\pi^{-1}(w_i))\,)\;\pi(\pi^{-1}(w_j))\,=\, ((\pi^{-1}(w_j))^{-1} \,\bullet\,w_i)\;w_j$, using first the definition of $\varphi$ and next the property of $\pi$.
So, $\varphi$ is a
homomorphism of $G(X,S)$ if and only if $w_{f_j(i)}\;w_j\,=\,((\pi^{-1}(w_j))^{-1} \,\bullet\,w_i)\;w_j$, 
 for all $1\leq i,j \leq n$, that is if and only if $(\pi^{-1}(w_j))^{-1}\bullet w_i=w_{f_j(i)}$, for all $1\leq i,j \leq n$. We have $(\pi^{-1}(w_j))^{-1}\bullet w_i=(\varphi(x_j))^{-1}\bullet\sigma(t_i)$ and $w_{f_j(i)}=\sigma(t_{f_j(i)})=\sigma(x_j^{-1}\bullet t_i)$, so $\varphi$ is a
 homomorphism of $G(X,S)$ if and only if $\sigma(x_j^{-1}\bullet t_i)=(\varphi(x_j))^{-1}\bullet \sigma(t_i)$.
  \end{proof}
  
  \begin{ex} 
    In Ex.\ref{exemple_sigma_not_working}, we find that the element $(\pi^{-1}(w_1))^{-1}$ acts on the left on $\mathbb{Z}^{n}$ via the permutation $f_2$. So, $(\pi^{-1}(w_1))^{-1}\bullet w_3$ is equal to $t_{f_2(1)}^{-4}t_{f_2(2)}^2 t_{f_2(3)}= t_1t_2^{2}t_3^{-4}$,
   which does not represent any column of $\sigma$. So,  from Prop. \ref{prop_phi_hom'}, $\varphi \notin \operatorname{Aut}(G(X,S))$.
    \end{ex}
   We use the following notation. We denote  by $(f_j)$ the permutation matrix corresponding to $f_j$, by $f'_j$ the  permutation  corresponding to the action of $(\pi^{-1}(w_j))^{-1}$ and  by $A_j$  the corresponding  permutation matrix.  We show that the condition for  $\varphi$ to be a homomorphism of $G(X,S)$ can be written in a very simple form. To simplify the notation in the proof, we define the  permutation matrix as a representing matrix, that is for the permutation $(1,2,3)$, its permutation matrix is  \[ \left( \begin{array}{ccc}
  0& 0 & 1 \\
    1 & 0 & 0\\
     0 & 1 & 0 \end{array} \right)\] 
   \begin{prop} \label{prop_phi_hom_matrices}
    Let  $\sigma \in GL_n (\mathbb{Z})$ and let $\varphi$ be the bijection defined by $\sigma$. Then \begin{equation}\label{equation_matrice}
 \varphi \in \operatorname{Aut}(G(X,S)) \;
   	\Leftrightarrow	 \; A_j\,\sigma\,=\,\sigma\,(f_j^{-1}),\; \forall 1 \leq j\leq n
   \end{equation}
   where $(f_j)$ is  the permutation matrix corresponding to $f_j$ and   $A_j$  is the  permutation matrix corresponding to  the action of $(\pi^{-1}(w_j))^{-1}$. 
  \end{prop}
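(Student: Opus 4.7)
The plan is to translate, for each fixed $j$, the pointwise criterion from Prop.~\ref{prop_phi_hom'} into a single matrix equality. That proposition gives $\varphi \in \operatorname{Aut}(G(X,S))$ iff $(\pi^{-1}(w_j))^{-1} \bullet w_i = w_{f_j(i)}$ for all $1 \le i, j \le n$, equivalently $\sigma(x_j^{-1} \bullet t_i) = (\varphi(x_j))^{-1} \bullet \sigma(t_i)$. Each such identity is a statement about vectors in $\mathbb{Z}^n$, and one packages them, for fixed $j$ and varying $i$, into a single equation of $n\times n$ matrices.

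Viewing $\sigma$ as the matrix whose $i$-th column is $(w_i)$, one has $\sigma e_i = (w_i)$, where $e_i$ is the standard basis vector corresponding to $t_i$. By Thm.~\ref{prop_etingof}$(ii)$, the left action of any $a \in G(X,S)$ on $\mathbb{Z}^n$ permutes the generators $t_1,\dots,t_n$ and is therefore realised by left multiplication by a permutation matrix. In particular the matrix $A_j$ represents the action of $(\pi^{-1}(w_j))^{-1}$, while $(f_j^{-1})$ represents the action of $x_j$, exactly by how they are defined in the statement. Consequently, $A_j \sigma e_i = A_j w_i$ is the left-hand side of the pointwise criterion, and $w_{f_j(i)} = \sigma(x_j^{-1}\bullet t_i)$ is, by the same identification, the $i$-th column of $\sigma(f_j^{-1})$.

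Putting the two columns together, the criterion $(\pi^{-1}(w_j))^{-1}\bullet w_i = w_{f_j(i)}$ for every $i$ is equivalent to $A_j \sigma e_i = \sigma(f_j^{-1}) e_i$ for every $i$, and hence to the matrix equality $A_j \sigma = \sigma(f_j^{-1})$. Imposing this for each $j$ yields the proposition. The whole argument is essentially a rephrasing of Prop.~\ref{prop_phi_hom'}; the only point requiring care is the bookkeeping of the permutation-matrix convention (the representing matrix fixed in the example just before the statement), which controls the precise way in which the permutation induced by the action of $x_j$ on $t_1,\dots,t_n$ is read off as the matrix $(f_j^{-1})$, and hence the form in which the inverse appears on the right-hand side of the matrix equation.
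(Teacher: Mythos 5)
Your argument is correct and is essentially the paper's own proof: both reduce to the criterion of Prop.~\ref{prop_phi_hom'} and then, for each fixed $j$, identify $(\pi^{-1}(w_j))^{-1}\bullet w_i$ with the $i$-th column of $A_j\,\sigma$ (via Thm.~\ref{prop_etingof}$(ii)$) and $w_{f_j(i)}$ with the $i$-th column of the right-hand product, so that the pointwise condition becomes the matrix identity column by column. The one point you flag --- that the fixed representing-matrix convention dictates whether the right-hand factor is read as $(f_j)$ or $(f_j^{-1})$ --- is treated in exactly the same (implicit) way as in the paper's proof, so nothing further is needed.
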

   \begin{proof}
     From Prop. \ref{prop_phi_hom'},   
    $\varphi \in \operatorname{Aut}(G(X,S))$ if and only if $(\pi^{-1}(w_j))^{-1}\bullet w_i=w_{f_j(i)}$, for all $1\leq i,j \leq n$.  We fix arbitrarily  $1 \leq j\leq n$. We show that  $(\pi^{-1}(w_j))^{-1}\bullet w_i$ and $w_{f_j(i)}$  are elements in $\mathbb{Z}^{n}$ representing  the $i$-th  column in the matrices $A_j\,\sigma$ and  $\sigma\,(f_j^{-1})$ respectively. The  $i$-th  column in   the matrix $\sigma\,(f_j^{-1})$ is  $(w_{f_j(i)})$, as multiplying on the right by $(f_j^{-1})$  permutes the columns of $\sigma$ with respect to $f_j$.  Assume  the $i$-th  column $(w_i)$ is represented by  $w_i=t_1^{m_1}t_2^{m_2}..t_n^{m_n}$ in $\mathbb{Z}^{n}$.  The   $i$-th  column in the matrix $A_j\,\sigma$ is equal to  $A_j\,(w_i)$ which is  represented by the element $t_{f'_j(1)}^{m_1}\,t_{f'_j(2)}^{m_2}..t_{f'_j(n)}^{m_n}$, as $A_j$ permutes the rows of $\sigma$ with respect to the permutation $f'_j$.  From Theorem \ref{prop_etingof}$(ii)$,  $t_{f'_j(1)}^{m_1}\,t_{f'_j(2)}^{m_2}..t_{f'_j(n)}^{m_n}$ is equal to $(\pi^{-1}(w_j))^{-1}\bullet w_i$,  so we have $(\pi^{-1}(w_j))^{-1}\bullet w_i=w_{f_j(i)}$, for all $1 \leq i,j\leq n$, if and only if  the  equality of  matrices  $A_j\,\sigma\,=\,\sigma\,(f_j^{-1})$ holds for all  $1 \leq j\leq n$.
   \end{proof}
 We prove the set of matrices in $GL_n (\mathbb{Z})$ that satisfy Eqn. \ref{equation_hom_action} (and Eqn. \ref{equation_matrice})   is a subgroup of $GL_n (\mathbb{Z})$ and the induced set of automorphisms of $G(X,S)$ is a subgroup of  $\operatorname{Aut}(G(X,S))$ that preserves  the free abelian subgroup  $N$ of  $G(X,S)$ (see Sec.\ref{sec_prelimin1}). 
 
    \begin{thm}\label{theo_im_group_auto}
     Let $(X,S)$ be  a non-degenerate symmetric set-theoretical solution of class $m$, with $\mid X \mid =n$. Let $G(X,S)$ be its structure group.  Let  $\theta_1,..,\theta_n$ be the $n$ frozen elements of length $m$ ending with $x_1,..,x_n$ respectively. Let $N$ be the free abelian subgroup of  $G(X,S)$ generated by $\theta_1,..,\theta_n$.  Let  $\pi: G(X,S) \rightarrow \mathbb{Z}^{n}$ be the  bijective $1$-cocycle defined in Prop. \ref{prop_etingof}.
   We define $\Im_\pi$  to be the following set: 
       
       \begin{eqnarray}
      \Im_\pi=\,\{\sigma \in GL_n (\mathbb{Z}) \mid  (\pi^{-1}(w_j))^{-1}\bullet w_i=w_{f_j(i)},\,  \forall 1 \leq j\leq n \} \label{formule_action}\\
       =\,\{\sigma \in GL_n (\mathbb{Z}) \mid  A_j\,\sigma\,=\,\sigma\,(f_j^{-1}),\,  \forall 1 \leq j\leq n \}\label{formule_matrice}
       \end{eqnarray}
     where $w_j$ is the element in $\mathbb{Z}^{n}$ representing the $j$-th  column of $\sigma$, $(f_j^{-1})$ and  $A_j$ are the permutation matrices corresponding to the action of $x_j$ and  $(\pi^{-1}(w_j))^{-1}$ respectively. We define  $\Phi_{\pi}$ to be the following set: 
                   \begin{equation*}\Phi_{\pi} =\{ \varphi: G(X,S) \rightarrow G(X,S), \;\;\;\varphi= \pi^{-1} \circ \sigma \circ \pi\, \mid \sigma \in \Im_\pi \}  \end{equation*} Then\\
    $(i)$ $\Im_\pi \leq  GL_n (\mathbb{Z})$\\
    $(ii)$  $\Phi_{\pi}\leq \operatorname{Aut}(G(X,S))$. \\
    $(iii)$  $\Phi_{\pi}(N)\subseteq N$.
     \end{thm}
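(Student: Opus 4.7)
The plan is to deduce all three assertions as formal corollaries of the preceding results---chiefly Proposition~\ref{prop_phi_hom'} and Lemma~\ref{lem_frozen_act_trivially}---without any fresh computation. The organizing observation is that the assignment $\Psi \colon \sigma \mapsto \varphi_\sigma = \pi^{-1}\circ \sigma \circ \pi$ is an injective group homomorphism from $GL_n(\mathbb{Z})$ into the group of set-theoretic bijections of $G(X,S)$, since $\varphi_\sigma \circ \varphi_\tau = \pi^{-1}\sigma\pi\pi^{-1}\tau\pi = \pi^{-1}(\sigma\tau)\pi = \varphi_{\sigma\tau}$ and the identity matrix is sent to $\operatorname{id}_{G(X,S)}$.

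For $(i)$ and $(ii)$, Proposition~\ref{prop_phi_hom'} characterizes $\Im_\pi$ as exactly the set of $\sigma$ for which $\varphi_\sigma$ is a group homomorphism of $G(X,S)$; since each $\varphi_\sigma$ is automatically a bijection of $G(X,S)$ as a set, this is equivalent to $\varphi_\sigma \in \operatorname{Aut}(G(X,S))$. Thus $\Im_\pi = \Psi^{-1}(\operatorname{Aut}(G(X,S)))$ is a subgroup of $GL_n(\mathbb{Z})$ as the preimage of a subgroup under a group homomorphism, yielding $(i)$; and $\Phi_\pi = \Psi(\Im_\pi)$ is a subgroup of $\operatorname{Aut}(G(X,S))$ as the image of a subgroup under a group homomorphism, yielding $(ii)$.

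For $(iii)$, the crucial preliminary step is to pin down $\pi(N)$. Using Lemma~\ref{lem_frozen_act_trivially}$(iii)$, each frozen generator satisfies $\pi(\theta_k)=t_k^m$. Combining the cocycle identity $\pi(ab)=(b^{-1}\bullet\pi(a))\,\pi(b)$ with Lemma~\ref{lem_frozen_act_trivially}$(vi)$---which asserts that every element of $N$ acts trivially on $\mathbb{Z}^n$ under $\bullet$---a short induction shows that $\pi|_N$ is a group homomorphism onto the subgroup of $\mathbb{Z}^n$ generated by $\{t_k^m : 1\le k\le n\}$, namely $m\mathbb{Z}^n$. Because $\sigma$ is $\mathbb{Z}$-linear, $\sigma(m\mathbb{Z}^n)=m\,\sigma(\mathbb{Z}^n)=m\mathbb{Z}^n$; equivalently $m\mathbb{Z}^n$ is a characteristic subgroup of $\mathbb{Z}^n$. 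Applying $\pi^{-1}$ yields $\varphi(N)=\pi^{-1}(\sigma(\pi(N)))=\pi^{-1}(m\mathbb{Z}^n)=N$, which is even stronger than the claimed inclusion.

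The only step requiring real care is the identification $\pi(N)=m\mathbb{Z}^n$, since $\pi$ is not multiplicative in general; the triviality of the $N$-action recorded in Lemma~\ref{lem_frozen_act_trivially}$(vi)$ is precisely the input that reduces the cocycle identity to ordinary multiplicativity on $N$. Beyond that, all three statements are formal consequences of $\Psi$ being a group homomorphism and of $m\mathbb{Z}^n$ being characteristic in $\mathbb{Z}^n$.
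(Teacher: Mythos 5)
Your proof is correct, and it uses the same essential ingredients as the paper: Proposition~\ref{prop_phi_hom'} (with Prop.~\ref{prop_phi_hom_matrices} for the matrix form) for $(i)$--$(ii)$, and Lemmas~\ref{lem_calcul_pi-1} and~\ref{lem_frozen_act_trivially} for $(iii)$. The differences are in packaging. For $(i)$--$(ii)$ the paper checks closure under composition and inverses directly via $\varphi_1\circ\varphi_2=\pi^{-1}\circ\sigma_1\sigma_2\circ\pi$; your observation that $\sigma\mapsto\pi^{-1}\circ\sigma\circ\pi$ is a group homomorphism into the symmetric group of the set $G(X,S)$, so that $\Im_\pi$ and $\Phi_\pi$ are a preimage and an image of $\operatorname{Aut}(G(X,S))$, is the same argument stated once and for all. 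For $(iii)$ the paper computes $\varphi(\theta_i)=\pi^{-1}\bigl((t_1^m)^{m_1}\cdots(t_n^m)^{m_n}\bigr)=\theta_1^{m_1}\cdots\theta_n^{m_n}$ directly; you instead identify $\pi(N)$ with the subgroup $m\mathbb{Z}^n$ and use that it is characteristic, which is the same computation seen from the other side of $\pi$ and yields $\varphi(N)=N$ outright (the paper gets this equality too, a posteriori, since $\Phi_\pi$ is a group). One small inaccuracy: Lemma~\ref{lem_frozen_act_trivially}$(vi)$ does not literally say that every element of $N$ acts trivially --- it speaks of elements $\pi^{-1}(t_{i_1}^{\pm m}\cdots t_{i_k}^{\pm m})$, and knowing these exhaust $N$ is close to what your induction is proving; to avoid the apparent circularity, argue that the action is a group homomorphism $G(X,S)\to\operatorname{Sym}(X)$ and that each generator $\theta_k=\pi^{-1}(t_k^m)$ and its inverse $\pi^{-1}(t_k^{-m})$ act trivially by parts $(iii)$--$(v)$ of that lemma, so all of $N$ lies in the kernel. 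With that adjustment your induction goes through and the proof stands.
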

    \begin{proof}
   $(i)$,  $(ii)$ The trivial automorphism belongs to $\Phi_{\pi}$, since   $I^{n\times n} \in \Im_\pi$, as  $A_j=(f_j^{-1})$, for all $1 \leq j\leq n$. Let $\varphi \in \Phi_{\pi}$.  From Prop. \ref{prop_phi_hom_matrices} and Prop. \ref{prop_phi_hom'}, 
    $\varphi \in \operatorname{Aut}(G(X,S))$. So,  $\Phi_{\pi} \subseteq \operatorname{Aut}(G(X,S))$. We have  $\Im_\pi$ is a subgroup of $GL_n (\mathbb{Z})$ and $\Phi_{\pi}$ is a subgroup of $\operatorname{Aut}(G(X,S))$, as both are closed under composition and inverses. Indeed, if $\varphi_1= \pi^{-1} \circ \sigma_1 \circ \pi$ and $\varphi_2= \pi^{-1} \circ \sigma_2 \circ \pi$ belong to $\Phi_{\pi}$,  then $\varphi_1 \circ  \varphi_2 \in \operatorname{Aut}(G(X,S))$. As, $\varphi_1 \circ  \varphi_2= \pi^{-1} \circ \sigma_1 \sigma_2 \circ \pi$, we have  $\sigma_1 \sigma_2 \in \Im_\pi$ and 
     $\varphi_1 \circ  \varphi_2 \in \Phi_{\pi}$  from Prop. \ref{prop_phi_hom'}. In the same way, we prove closure under inverses.\\
     $(iii)$    Let $\varphi \in \Phi_{\pi}$, with $\varphi= \pi^{-1} \circ \sigma \circ \pi$.  Assume $\sigma(t_i)=t_1^{m_1}..t_n^{m_n}$, where $1 \leq i \leq n$.
        We show $\varphi(\theta_i) \in N$.  We have $\varphi(\theta_i) =\pi^{-1} \circ\sigma \circ \pi (\theta_i)=\pi^{-1} \circ\sigma (t_i^m)=\pi^{-1}((t_1^{m_1}..t_n^{m_n})^m)=\pi^{-1}((t_1^m)^{m_1}..(t_n^m)^{m_n})=(\pi^{-1}(t_1^m))^{m_1}..(\pi^{-1}(t_n^m))^{m_n}=\theta_1^{m_1}..\theta_n^{m_n} \in N$, using first Lemma \ref{lem_frozen_act_trivially}$(iii)$, next the definition of $\sigma$ and the commutativity in $\mathbb{Z}^n$ and at last Lemma \ref{lem_calcul_pi-1}$(iv)$ and  Lemma \ref{lem_frozen_act_trivially}$(v),(vi)$.
        \end{proof} Note that if the solution is trivial,  $G(X,S)=\mathbb{Z}^{n}$,  we recover $\Phi_{\pi}=\Im_\pi=GL_n (\mathbb{Z})$. From the proof of Thm. \ref{theo_im_group_auto}, we have that the correspondence $\sigma \rightarrowtail \varphi$ satisfies $\sigma_1 \circ \sigma_2 \rightarrowtail \varphi_1 \circ \varphi_2$ and $\sigma^{-1} \rightarrowtail \varphi^{-1}$. T. Gateva-Ivanova and M. Van den Bergh show  the structure group $G(X,S)$ of a non-degenerate symmetric solution $(X,S)$ with $\mid X \mid =n$ is a Bieberbach group of rank  $n$. Indeed, they show  $G(X,S)$ acts freely on $\mathbb{R}^n$ by isometries with fundamental domain $[0,1]^n$  and the action is defined using the $I$-structure \cite{gateva_van} (see also \cite{jespers,jespers_book}). P. Etingof uses the $1$-cocycle $\pi$ to define a free  action of  $G(X,S)$  on $\mathbb{R}^n$ by isometries to  show the classifying space of $G(X,S)$ is a compact manifold of dimension $n$ \cite{etingof_personal}. The study of the automorphisms of a Bieberbach group has been initiated  in \cite{charlap} and  A. Szczepanski uses the tools developed there to study the outer automorphism group of a Bieberbach  group. In \cite{out_bie}, he studies the question when $\operatorname{Out}(G)$,  the outer automorphism group of a Bieberbach  group $G$,   is finite. He shows that for any given 
        $n \geq 3$, there exists a   Bieberbach  group  of rank $n$ with finite  outer automorphism group and  there exists also a    Bieberbach  group  of rank $n$ with infinite  outer automorphism group (quasi-isometric to $\operatorname{GL_k(\mathbb{Z})}$ with $k \leq n-1$ ). For all $n \geq 3$,  we do not know to which category the structure groups belong,  since, for each $n$, the number of structure groups is less than the number of Bieberbach groups of rank $n$.

       In what follows, we consider computational issues and we give some tools that permit an effective construction of the groups $\Im_\pi$ and  $\Phi_{\pi}$.
        In the following lemma, we give for each column $(w_j)$ a characterisation of the set of  possible permutations for $A_j$. This set is clearly finite and the lemma gives  strong restrictions on it.   
       \begin{lem}\label{lem_fjAj_same_structure}
     Let $\sigma \in GL_n (\mathbb{Z})$. Let  $w_j \in \mathbb{Z}^n$ represent the  $j$-th column of $\sigma$. Let  $f'_j$ be the  permutation  corresponding to the action of $(\pi^{-1}(w_j))^{-1}$  and   $A_j$  the corresponding  permutation matrix. If  $A_j\,\sigma\,=\,\sigma\,(f_j^{-1})$, then \\
     $(i)$ $f'_j$ has the same parity as $f_j$.\\
      $(ii)$ $f'_j$ has the same number of fixed points as $f_j$.\\
      $(iii)$ $f'_j \in \langle f_1,f_2,..,f_n \rangle$.\\
      $(iv)$ Assume $A_i\,\sigma\,=\,\sigma\,(f_i^{-1})$, $1 \leq i \leq n$. Then $A_i=A_j$ if and only if $f_i=f_j$.
     \end{lem}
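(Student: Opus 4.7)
The plan is to exploit the single structural identity $A_j\sigma=\sigma(f_j^{-1})$, which rewrites as $A_j=\sigma(f_j^{-1})\sigma^{-1}$, i.e.\ the permutation matrix $A_j$ is conjugate in $GL_n(\mathbb{Z})$ to the permutation matrix of $f_j^{-1}$. Parts (i), (ii), (iv) will follow from this conjugacy together with elementary linear algebra, while (iii) is essentially a restatement of how $G(X,S)$ acts on $X$.

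For (i), I will take determinants on both sides of $A_j=\sigma(f_j^{-1})\sigma^{-1}$ to conclude $\det(A_j)=\det(f_j^{-1})$. Since the determinant of a permutation matrix is the sign of the corresponding permutation, and $f_j$ and $f_j^{-1}$ have the same sign, this gives $\operatorname{sgn}(f'_j)=\operatorname{sgn}(f_j)$. For (ii), I will take traces instead: conjugate matrices have equal trace, and for a permutation matrix (in the convention fixed before Prop.\ \ref{prop_phi_hom_matrices}) the trace counts the fixed points of the permutation. Since $f_j$ and $f_j^{-1}$ clearly share the same set of fixed points, (ii) follows.

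For (iii), by construction $f'_j$ is the permutation of $X$ produced by the left action of the element $(\pi^{-1}(w_j))^{-1}\in G(X,S)$. By Theorem \ref{prop_etingof}(i), this action is the map $x\mapsto f_x^{-1}$ extended to $G(X,S)$ by the monomorphism $\phi_f$. Since $\pi^{-1}(w_j)^{-1}$ is a word in $x_1^{\pm 1},\ldots,x_n^{\pm 1}$, its image in $\operatorname{Sym}(X)$ is a product of $f_1^{\pm 1},\ldots,f_n^{\pm 1}$, hence lies in the subgroup $\langle f_1,\ldots,f_n\rangle$, as required.

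For (iv), if $f_i=f_j$ then $A_i\sigma=\sigma(f_i^{-1})=\sigma(f_j^{-1})=A_j\sigma$, and right-cancelling $\sigma$ (invertible in $GL_n(\mathbb{Z})$) gives $A_i=A_j$; the converse is the same computation read backwards. I do not anticipate a genuine obstacle: the only point that requires care is bookkeeping the permutation-matrix convention adopted just before Prop.\ \ref{prop_phi_hom_matrices}, so that ``trace $=$ number of fixed points'' and ``determinant $=$ sign'' are applied consistently on both sides of the conjugation identity.
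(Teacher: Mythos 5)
Your proof is correct, and for parts (i), (ii) and (iv) it coincides with the paper's argument: the identity $A_j\sigma=\sigma(f_j^{-1})$ makes $A_j$ and $(f_j^{-1})$ similar, so equal determinant and trace give parity and fixed-point count (using that $(f_j)$ and $(f_j^{-1})$ share sign and fixed points), and (iv) is cancellation of the invertible $\sigma$. The only genuine difference is in (iii): the paper proceeds by induction on the length of $w_j$, using the explicit computation rules for $\pi^{-1}$ (Lemma \ref{lem_calcul_pi-1}) to see that for $w_j=t_k^{\pm1}$ the resulting permutation is $f_k^{-1}$ or $f_{g_k^{-1}(k)}$ and then propagating via the cocycle formula $\pi^{-1}(uv)=\pi^{-1}(\pi^{-1}(v)\bullet u)\,\pi^{-1}(v)$. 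You instead observe that $(\pi^{-1}(w_j))^{-1}$ is simply an element of $G(X,S)$ and that, by Theorem \ref{prop_etingof}$(i)$, the action $x\mapsto f_x^{-1}$ is a left action, i.e.\ a homomorphism $G(X,S)\rightarrow\operatorname{Sym}(X)$ sending generators to $f_{x}^{-1}$; hence the permutation attached to any element of $G(X,S)$ lies in $\langle f_1,\ldots,f_n\rangle$. Your route is shorter and avoids the induction entirely; the paper's induction has the side benefit of recording exactly which generators' permutations occur (useful for the effective computations of $\Im_\pi$ later in the paper), but as a proof of the stated claim both are valid.
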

    \begin{proof}
     $(i)$, $(ii)$  From  $A_j\,\sigma\,=\,\sigma\,(f_j^{-1})$, it results that $A_j$ and $(f_j^{-1})$ are similar matrices, so  $\operatorname{det}(A_j)=\operatorname{det}(f_j^{-1})$ and $\operatorname{trace}(A_j)=\operatorname{trace}(f_j^{-1})$. As   $(f_j)$ is a permutation matrix, it is an orthogonal matrix, so  $\operatorname{det}(f_j^{-1})=\operatorname{det}(f_j)$  and $\operatorname{trace}(f_j^{-1})=\operatorname{trace}(f_j)$. Moreover, $\operatorname{det}(f_j)$ and $\operatorname{trace}(f_j)$ represent respectively the parity and the number of fixed points of the permutation $f_j$. The same holds for $A_j$. So, 
     $f'_j$ has the same parity  and the same number of fixed points as $f_j$.\\
     $(iii)$ By induction on the length of $w_j$. If $w_j=t_k^{\epsilon}$,  $\epsilon=\pm 1$,  then  $\pi^{-1}(t_k^{\epsilon})$ is equal to $x_k$ or $x_{g^{-1}_k(k)}^{-1}$, from lemma \ref{lem_calcul_pi-1}. So, the corresponding permutation  $f'_j$ is $f_k^{-1}$ or $f_{g^{-1}_k(k)}$, that is $f'_j \in \langle f_1,f_2,..,f_n \rangle$. Using Lemma \ref{lem_calcul_pi-1}{} $(iv)$ we have inductively $f'_j\in \langle f_1,f_2,..,f_n \rangle$.      $(iv)$ results from cancellation by $\sigma$.
    \end{proof}
    
 \begin{ex}
 We consider Ex.\ref{exemple:exesolu_et_gars}. Assume  $\sigma \in GL_n (\mathbb{Z})$ has first column $(w_1)\,=\, (-4, 2, 8 , 6)^t$, then  the element $(\pi^{-1}(w_1))^{-1}$ acts trivially on $\mathbb{Z}^{n}$ (as the corresponding permutation is $Id_X$), so $A_1=I^{4\times 4}$. From Lemma \ref{lem_fjAj_same_structure}, $\sigma$  cannot be a candidate, as $f_{1}$ is an odd permutation and $Id_X$ an even one. 
 \end{ex}
   
 The following proposition gives a very efficient tool to compute the permutation corresponding to the action of the element $(\pi^{-1}(w_j))^{-1}$, where $w_j$ represents the $j$-th column on $\sigma$: the computations can be done modulo the class of the solution, and whenever the solution is of class $2$ only the parity of the numbers occurring in $\sigma$ is important. 
  \begin{prop}\label{prop_mod_m}
  Assume $(X,S)$ is of class $m$. Let  $\sigma \in GL_n (\mathbb{Z})$.  Let $(w_j)$ denote the $j$-th  column of $\sigma$, with representing element  $w_j=t_1^{m_1}t_2^{m_2}..t_n^{m_n}$ in $\mathbb{Z}^n$,  $m_1,..,m_n \in \mathbb{Z}$.  Let denote  $w'_j= t_{i_1}^{m'_{i_1}}t_{i_2}^{m'_{i_2}}..t_{i_n}^{m'_{i_n}}$, where $m'_i \equiv m_i\, (mod m)$ and $m'_{i_k} \not\equiv 0\, (mod m)$.
   Then $(\pi^{-1}(w_j))^{-1}\bullet w\,=\,(\pi^{-1}(w'_j))^{-1}\bullet w$, for all  $w\in \mathbb{Z}^{n}$.\\ That is,  the action of $(\pi^{-1}(w_j))^{-1}$ is equivalent to the action of $(\pi^{-1}(w'_j))^{-1}$.
  \end{prop}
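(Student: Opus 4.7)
My plan is to exploit the fact that $w'_j$ and $w_j$ differ, as elements of the abelian group $\mathbb{Z}^n$, by a product of $m$-th powers of the generators $t_i$, and then to invoke the triviality of the action of such elements under $\pi^{-1}$ established in Lemma \ref{lem_frozen_act_trivially}.

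More precisely, writing $m_i = m'_i + k_i m$ for integers $k_i$ (with $k_i$ possibly zero), commutativity in $\mathbb{Z}^n$ lets me write
\[
w_j \;=\; w'_j \cdot v \;=\; v \cdot w'_j, \qquad v \;=\; \prod_{i=1}^{n} t_i^{k_i m}.
\]
The first step is to compute $\pi^{-1}(w_j) = \pi^{-1}(w'_j \cdot v)$ using the recursion Lemma \ref{lem_calcul_pi-1}$(iv)$, which gives
\[
\pi^{-1}(w_j) \;=\; \pi^{-1}\bigl(\pi^{-1}(v) \bullet w'_j\bigr)\, \pi^{-1}(v).
\]
Now I apply Lemma \ref{lem_frozen_act_trivially}$(vi)$, which says precisely that $\pi^{-1}(v)$ acts trivially on every element of $\mathbb{Z}^n$; in particular $\pi^{-1}(v) \bullet w'_j = w'_j$. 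Hence
\[
\pi^{-1}(w_j) \;=\; \pi^{-1}(w'_j)\, \pi^{-1}(v),
\]
and taking inverses in $G(X,S)$ yields $(\pi^{-1}(w_j))^{-1} = \pi^{-1}(v)^{-1} \, (\pi^{-1}(w'_j))^{-1}$.

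The final step is to evaluate the $\bullet$-action of the left-hand side on an arbitrary $w \in \mathbb{Z}^n$. Since the kernel of the left action of $G(X,S)$ on $\mathbb{Z}^n$ (induced by $a \mapsto f_a^{-1}$ and extended as in Theorem \ref{prop_etingof}$(ii)$) is a subgroup, the triviality of the action of $\pi^{-1}(v)$ forces $\pi^{-1}(v)^{-1}$ to act trivially as well. Therefore
\[
(\pi^{-1}(w_j))^{-1} \bullet w \;=\; \pi^{-1}(v)^{-1} \bullet \bigl((\pi^{-1}(w'_j))^{-1} \bullet w\bigr) \;=\; (\pi^{-1}(w'_j))^{-1} \bullet w,
\]
which is the required equality. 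I do not anticipate a serious obstacle here: the whole argument is essentially a bookkeeping exercise that repackages Lemma \ref{lem_calcul_pi-1}$(iv)$ and Lemma \ref{lem_frozen_act_trivially}$(vi)$. The only point worth attention is the passage from ``$\pi^{-1}(v)$ acts trivially'' to ``$\pi^{-1}(v)^{-1}$ acts trivially'', which follows immediately from the homomorphism property of the action map $G(X,S) \to \operatorname{Sym}(X)$.
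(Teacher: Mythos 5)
Your proof is correct and follows essentially the same route as the paper: decompose $w_j = w'_j \cdot v$ with $v$ a product of $m$-th powers, apply Lemma \ref{lem_calcul_pi-1}$(iv)$ and the trivial action from Lemma \ref{lem_frozen_act_trivially}$(vi)$. The only difference is cosmetic: you pass explicitly to inverses via the homomorphism property of the action, while the paper computes the action of $\pi^{-1}(w_j)$ directly and leaves that last step implicit.
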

 \begin{proof}
 In $\mathbb{Z}^n$, $w_j= t_{i_1}^{m'_1}t_{i_2}^{m'_2}..t_{i_n}^{m'_n}t_1^{k_1m}t_2^{k_2m}..t_n^{k_nm}$, $m'_{i_k} \equiv m_i$, $m'_{i_k} \not\equiv 0$  $(mod m)$. So, 
    $\pi^{-1}(w_j)\bullet w=\,(\pi^{-1}(    \pi^{-1}(t_1^{k_1m}t_2^{k_2m}..t_n^{k_nm}) \bullet  t_1^{m'_1}t_2^{m'_2}..t_n^{m'_n})\,\pi^{-1}( t_1^{k_1m}t_2^{k_2m}..t_n^{k_nm}) )\bullet w=$\\
    $\pi^{-1}(t_1^{m'_1}t_2^{m'_2}..t_n^{m'_n})\,\bullet w$, using  first Lemma \ref{lem_calcul_pi-1}$(iv)$ and then Lemma \ref{lem_frozen_act_trivially}$(vi)$.
 \end{proof}
 We conclude Sec. \ref{subsec_subg_aut} with some examples of groups of automorphisms obtained.
   \begin{ex}\label{exemples_aut_group}
   In Ex. \ref{exemple:exesolu_et_gars}, we find \\
     \begin{eqnarray*}
   \Im_\pi=\{ I^{4 \times 4},-I^{4 \times 4}, \left( \begin{array}{cccc}
    0 & 1 & 0 &0 \\
     1 & 0 & 0 & 0\\
     0 & 0 & 0 & 1\\
     0 & 0 & 1 &  0\end{array} \right),
     \left( \begin{array}{cccc}
         0 & -1 & 0 &0 \\
          -1 & 0 & 0 & 0\\
          0 & 0 & 0 & -1\\
          0 & 0 & -1 &  0\end{array} \right) \}\end{eqnarray*}   
          Note that the automorphism $\varphi$ induced by the matrix $-I \in \Im_\pi$ satisfies $\varphi(x_1)=x_1^{-1}$, $\varphi(x_2)=x_2^{-1}$, $\varphi(x_3)=x_4^{-1}$ and $\varphi(x_4)=x_3^{-1}$, using Lemma  \ref{lem_frozen_act_trivially}$(i)$.\\  
    Let  $G_2= \langle x_2,x_3 \mid  x^{2}_{2}=x^{2}_{3}\rangle$ be the structure group of the unique  non-trivial (indecomposable) solution for $\mid X \mid =2$.             
      For $G_2$, we find \[\Im_\pi=\{ \sigma_1= I^{2 \times 2},\sigma_2=-I^{2 \times 2},
            \sigma_3= \left( \begin{array}{cc}
                0 & 1  \\
                 1 & 0
                 \end{array} \right) ,
                  \sigma_4= \left( \begin{array}{cc}
                            0 & -1  \\
                             -1 & 0
                             \end{array} \right) \}\] 
                
     Accordingly, $\Phi_{\pi} =\operatorname{Out}(G_2)= \mathbb{Z}_2 \times \mathbb{Z}_2$ \cite{karass_piet}.  \\
    Let $G_3= \langle x_1, x_2,x_3 \mid  x_1x_3=x^{2}_{2},x_2x_1=x^{2}_{3},
      x_3 x_2=x^{2}_{1}\rangle$,  the structure group of the unique  indecomposable solution for $\mid X \mid =3$ (of class 3), the group  $\Im_\pi$ is:
       \begin{eqnarray*}
        \Im_\pi=\{ \sigma_1= I^{3 \times 3},
            \sigma_2= \left( \begin{array}{ccc}
                0 & 0 & 1  \\
                 1 & 0 & 0 \\
                 0 & 1 & 0
                 \end{array} \right) ,
                  \sigma_3= \left( \begin{array}{ccc}
                             0 & 1 & 0  \\
                             0 & 0 & 1 \\
                             1 & 0 & 0
                             \end{array} \right) 
                              ,
                    \sigma_4= \left( \begin{array}{ccc}
                           -1 & 0 & 0  \\
                           0 & 0 & -1 \\
                         0 & -1 & 0
                   \end{array} \right)
                            ,\\
        \sigma_5 =\left( \begin{array}{ccc}
                              0 & 0 & -1  \\
                              0 & -1 & 0 \\
                            -1 & 0 & 0
                      \end{array} \right)
             ,
            \sigma_6=  \left( \begin{array}{ccc}
                                    0 & -1 & 0  \\
                                    -1 & 0 & 0 \\
                                     0 & 0 & -1
                            \end{array} \right)                                                                 \}
                            \end{eqnarray*}

     And accordingly, $\Phi_{\pi} = \mathbb{Z}_3 \rtimes \mathbb{Z}_2$.  In  section \ref{Sec_indecomposable}, we show  $\Phi_{\pi} \leq  \operatorname{Out}(G(X,S))$.
     We present a decomposable  solution $(X,S)$ with 
            $X = \{x_1,x_2,x_3\}$,  $f_i=g_{i}=(2,3)$ $1 \leq i \leq3$, and  structure group $\langle x_1,x_2,x_3 \mid x_{1}x_{3}=x_{2}x_{1},x_{1}x_{2}=x_{3}x_{1}, x^{2}_{2}=x^{2}_{3}\rangle \simeq \mathbb{Z}\ltimes G_2$. Then $\Im_\pi=\{
                   \left( \begin{array}{ccc}
                       a & b & b  \\
                        c & d & e \\
                        c &  e & d
                       \end{array} \right) :e=d \pm 1,  a(d+e)-2bc=\pm 1; a,c,d,e \in \mathbb{Z}, b \in 2\mathbb{Z} \}$ is an infinite subgroup of $\operatorname{GL_n(\mathbb{Z})}$. Furthermore,  $\Phi_\pi \cap \operatorname{Inn(G)} \neq Id$, as  the inner automorphism obtained from conjugation by $x_1$
           can be represented by the matrix in $\Im_\pi$ satisfying  $a=e=1$ and $b=c=d=0$.  
\end{ex}

 \subsection{Invariance of  the group $\Phi_{\pi}$ under the change of  bijective 1-cocycle}
Consider the one-to-one correspondence between non-degenerate symmetric solutions  with quadruples (see Sec. \ref{subsec_qybe_Backgd}). If  $(X,S)$ and $(X',S')$ are  different solutions, then they  are in  correspondence with different quadruples $(G(X,S),X,\rho, \pi)$ and  $(G(X',S'),X',\rho', \pi')$, which, using the construction  from Sec. \ref{subsec_subg_aut}, give the groups $\Phi_{\pi}$ and $\Phi_{\pi'}$, respectively.  In this context, a question  that arises naturally is: if $G(X,S) \simeq G(X',S')$ then  what is the relation between  the groups  $\Phi_{\pi}$ and  $\Phi_{\pi'}$, as the bijective 1-cocycles $\pi$ and $\pi'$ may be different.
  We show that under some additional conditions,  $G(X,S) \simeq G(X',S')$ if and only if  $(X,S)$ and $(X',S')$ are equivalent solutions (Lemma \ref{lem_equiv_sol}) and in such a case  
we show $\Phi_{\pi}=\Phi_{\pi'}$, that is $\Phi_{\pi}$ is invariant under change of  bijective 1-cocycle.  
 \begin{lem}\label{lem_equiv_sol}
 Let  $(X,S)$ and $(X',S')$ be  non-degenerate symmetric solutions, with structure groups $G(X,S)$ and $G(X',S')$ respectively. If  $(X,S)$ and $(X',S')$ are equivalent solutions, then $G(X,S) \simeq G(X',S')$. Conversely, if  $G(X,S) \simeq^{\alpha} G(X',S')$ and $\alpha(X)=X'$, then $(X,S)$ and $(X',S')$ are equivalent solutions.
 \end{lem}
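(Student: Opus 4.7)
The plan is to prove the two implications of the biconditional separately. For the forward direction, I would start from the equivalence $\alpha:X\to X'$ satisfying $S'\circ(\alpha\times\alpha)=(\alpha\times\alpha)\circ S$. Writing $S'(x',y')=(g'_{x'}(y'),f'_{y'}(x'))$, this condition decomposes coordinatewise into the two identities $g'_{\alpha(x)}(\alpha(y))=\alpha(g_x(y))$ and $f'_{\alpha(y)}(\alpha(x))=\alpha(f_y(x))$. Extending $\alpha$ to a morphism between the free groups on $X$ and $X'$, the defining relation $xy=g_x(y)f_y(x)$ of $G(X,S)$ is then sent to $\alpha(x)\alpha(y)=g'_{\alpha(x)}(\alpha(y))f'_{\alpha(y)}(\alpha(x))$, which is a defining relation of $G(X',S')$. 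Applying the same argument to $\alpha^{-1}$, I would conclude that $\alpha$ descends to a well-defined group isomorphism $G(X,S)\simeq G(X',S')$.

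For the converse, the assumption is that $\alpha:G(X,S)\to G(X',S')$ is an isomorphism with $\alpha(X)=X'$. The restriction of $\alpha$ to $X$ is then a set-theoretical bijection, which I would also denote $\alpha$. Applying $\alpha$ to the relation $xy=g_x(y)f_y(x)$ yields the identity $\alpha(x)\alpha(y)=\alpha(g_x(y))\alpha(f_y(x))$ inside $G(X',S')$. The goal is to promote this group-theoretic identity between two positive length-two words in $X'$ into an actual coincidence of defining relations, so as to force $(\alpha(g_x(y)),\alpha(f_y(x)))=S'(\alpha(x),\alpha(y))$ and hence the required equality $(\alpha\times\alpha)\circ S=S'\circ(\alpha\times\alpha)$.

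The key step will be a monoid-theoretic comparison argument. Since $G(X',S')$ is the group of fractions of its Garside monoid (Thm.~\ref{thm__strgp_garside}) and that monoid embeds into the group, and since by Thm.~\ref{thm__strgp_garside}(i)(b) each word $x'y'$ appears at most once on the left-hand side of a defining relation, two positive length-two words in $X'$ can coincide in $G(X',S')$ only if they are identical or if they constitute the two sides of a single defining relation. Applied to $\alpha(x)\alpha(y)$ and $\alpha(g_x(y))\alpha(f_y(x))$, either the two words agree letter-by-letter (which corresponds to $S(x,y)=(x,y)$ and, transported back through $\alpha^{-1}$, forces $S'(\alpha(x),\alpha(y))=(\alpha(x),\alpha(y))$), or they form the defining relation $\alpha(x)\alpha(y)=g'_{\alpha(x)}(\alpha(y))f'_{\alpha(y)}(\alpha(x))$, which directly gives $\alpha(g_x(y))=g'_{\alpha(x)}(\alpha(y))$ and $\alpha(f_y(x))=f'_{\alpha(y)}(\alpha(x))$. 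In either case $S'\circ(\alpha\times\alpha)=(\alpha\times\alpha)\circ S$ on the pair $(x,y)$, so the two solutions are equivalent.

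The main obstacle is precisely this cancellation step: ensuring that no other coincidence of length-two positive words in $G(X',S')$ is possible. This is where the embedding of the Garside monoid into its group of fractions, together with the uniqueness clause in Theorem~\ref{thm__strgp_garside}, is essential; the forward direction, by contrast, is essentially a formal manipulation of the presentation.
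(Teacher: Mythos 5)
Your proof is correct and follows essentially the same route as the paper: the forward direction is the same transport of the presentation along $\alpha$, and the converse rests on comparing the relation $\alpha(x)\alpha(y)=\alpha(g_x(y))\,\alpha(f_y(x))$ with the defining relation $\alpha(x)\alpha(y)=g'_{\alpha(x)}(\alpha(y))\,f'_{\alpha(y)}(\alpha(x))$ in $G(X',S')$. The only difference is that you make explicit (via the embedding of the Garside monoid into its group of fractions and the fact that each length-two word occurs in at most one defining relation) the cancellation step that two equal positive length-two words must coincide or form a single defining relation, together with the fixed-point case $S(x,y)=(x,y)$ — a step the paper's proof leaves implicit.
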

 \begin{proof}
 We recall that $G(X,S)$ is generated  by $X$ subject to the defining relations $x\,y=S_1(x,y)\,S_2(x,y)$ for all $x,y \in X$, where $S_1(x,y)$ and $S_2(x,y)$ denote the first and second component of $S(x,y)$. If  $(X,S)$ and $(X',S')$ are equivalent solutions, then 
 $G(X',S')$ is generated  by $X'$ subject to the defining relations $\alpha(x)\,\alpha(y)=\alpha(S_1(x,y))\,\alpha(S_2(x,y))$,  for all $x,y \in X$. So, $G(X,S) \simeq G(X',S')$. Conversely, if  $G(X,S) \simeq^{\alpha} G(X',S')$ and $\alpha(X)=X'$, then $\alpha(x)\,\alpha(y)=\alpha(S_1(x,y))\,\alpha(S_2(x,y))$, for all $x,y \in X$,  are  relations in $G(X',S')$. As, $\alpha(x)\,\alpha(y)=S'_1(\alpha(x)\,,\alpha(y))S'_2(\alpha(x)\,,\alpha(y))$ holds also in $G(X',S')$, we have $(X,S)$ and $(X',S')$ are equivalent solutions.
 \end{proof}
  \begin{prop} \label{prop_equiv_sol}
   Let  $(X,S)$ and $(X',S')$ be  non-degenerate symmetric solutions, with structure groups $G(X,S)$, $G(X',S')$,  bijective $1$-cocycles  $\pi: G(X,S) \rightarrow \mathbb{Z}^{n}$,  $\pi': G(X',S') \rightarrow \mathbb{Z}^{n}$, and groups of automorphisms  $\Phi_{\pi}$, $\Phi_{\pi'}$, respectively. 
   Assume $(X,S)$ and $(X',S')$ are equivalent solutions and  $\pi(x_i)=\pi'(x_i)=t_i$, with $t_i$, $1 \leq i \leq n$,  a generator of $\mathbb{Z}^{n}$. Assume we have one of the following three situations:\\
\begin{figure}[here]
\centering
\begin{tikzpicture}
    \matrix (m) [matrix of math nodes,row sep=3em,column sep=4em,minimum width=2em]
    {
     \mathbb{Z}^{n} & G(X,S)\simeq G(X',S') & \mathbb{Z}^{n} \\
     \mathbb{Z}^{n} & G(X,S)\simeq G(X',S') & \mathbb{Z}^{n} \\ };
    \path[-stealth]
      (m-1-1) edge node [left] {$\sigma$} (m-2-1)
              edge [double] node [above] {$\pi^{-1}$} (m-1-2)
      (m-2-1.east|-m-2-2) edge [double]
              node [above] {$\pi^{-1}$} (m-2-2)
      (m-1-2) edge node [right] {$\varphi$} (m-2-2)
              ;
               \path[-stealth]
                  (m-1-2) edge node [left] {} (m-2-2)
                   edge [double] node [above] {$\pi'$} (m-1-3)
                    (m-2-2.east|-m-2-3) edge [double]
                            node [above] {$\pi'$} (m-2-3)
                    (m-1-3) edge node [right] {$\sigma'$} (m-2-3)
                            ;
  \end{tikzpicture}\caption{}
\end{figure}
\begin{figure}[here]
\centering
\begin{minipage}{0.45\textwidth}
\centering
 \begin{tikzpicture}\label{case2}
    \matrix (m) [matrix of math nodes,row sep=3em,column sep=4em,minimum width=2em]
    {
       G(X,S)\simeq G(X',S') & \mathbb{Z}^{n} \\
      G(X,S)\simeq G(X',S') & \mathbb{Z}^{n} \\ };
    \path[-stealth]
      (m-1-1) edge node [left] {$\varphi$} (m-2-1)
              edge [double] node [above] {$\pi'$} (m-1-2)
      (m-2-1.east|-m-2-2) edge [double]
              node [above] {$\pi$} (m-2-2)
      (m-1-2) edge node [right] {$\sigma$} (m-2-2)
              ;
  \end{tikzpicture} \caption{}
\end{minipage}\hfill
\begin{minipage}{0.45\textwidth}
\centering
 \begin{tikzpicture}\label{case3}
    \matrix (m) [matrix of math nodes,row sep=3em,column sep=4em,minimum width=2em]
    {
      G(X,S)\simeq G(X',S') & \mathbb{Z}^{n} \\
      G(X,S)\simeq G(X',S') & \mathbb{Z}^{n} \\ };
    \path[-stealth]
      (m-1-1) edge node [left] {$\varphi$} (m-2-1)
              edge [double] node [above] {$\pi$} (m-1-2)
      (m-2-1.east|-m-2-2) edge [double]
              node [above] {$\pi'$} (m-2-2)
      (m-1-2) edge node [right] {$\sigma$} (m-2-2)
              ;
  \end{tikzpicture} \caption{}
\end{minipage}
\end{figure}

  Then, in all the cases, the set of automorphisms obtained is  $\Phi_\pi=\Phi_\pi'$.
  \end{prop}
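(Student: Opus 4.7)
The plan is to reduce all three configurations to the single observation that, under the stated hypotheses, the two bijective $1$-cocycles $\pi$ and $\pi'$ coincide once we identify $G(X,S)$ with $G(X',S')$. First, by Lemma~\ref{lem_equiv_sol}, the equivalence of $(X,S)$ and $(X',S')$ supplies an isomorphism $G(X,S) \simeq G(X',S')$; together with the normalization $\pi(x_i)=\pi'(x_i)=t_i$, this isomorphism can be arranged to send the $i$-th generator of $G(X,S)$ to the $i$-th generator of $G(X',S')$. Under this identification the underlying left actions $\bullet$ and $\bullet'$ of the two groups on $\mathbb{Z}^{n}$ both send $x_j \bullet t_i$ to $t_{f_j^{-1}(i)}$, where the permutation $f_j$ is the same on both sides because the equivalence preserves the labels.

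Second, I would appeal to the uniqueness of the bijective $1$-cocycle: by the recursion $\pi(a_1 a_2)=(a_2^{-1}\bullet \pi(a_1))\,\pi(a_2)$ from Theorem~\ref{prop_etingof}$(iv)$, the values of $\pi$ on all of $G(X,S)$ are determined by its values on the generators $x_i$ together with the action $\bullet$. Since $\pi$ and $\pi'$ agree on the generators and use the same action, this forces $\pi=\pi'$ as maps from the identified group to $\mathbb{Z}^{n}$.

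Once $\pi=\pi'$ is in hand, the three diagrams collapse uniformly. In Figure~1 (Case~1), both $\sigma$ and $\sigma'$ are equal to $\pi \circ \varphi \circ \pi^{-1}$, so the two membership conditions $\sigma \in \Im_\pi$ and $\sigma' \in \Im_{\pi'}$ are literally the same condition, giving $\Phi_\pi=\Phi_{\pi'}$. In the mixed Cases~2 and~3 of Figures~2 and~3, the single matrix $\sigma$ is also computed as $\pi \circ \varphi \circ \pi^{-1}=\pi' \circ \varphi \circ \pi'^{-1}$ in each diagram, again yielding $\sigma \in \Im_\pi \iff \sigma \in \Im_{\pi'}$ and therefore $\Phi_\pi=\Phi_{\pi'}$.

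The main obstacle is the very first step: verifying that the equivalence of solutions, together with the normalization of the cocycles on generators, really forces the actions on $\mathbb{Z}^{n}$ to coincide. This amounts to a bookkeeping argument on the defining identity $S' \circ (\alpha\times\alpha) = (\alpha\times\alpha) \circ S$ showing that, relative to the labelings picked out by $\pi$ and $\pi'$, the bijection $\alpha$ acts as the identity on the indexing set $\{1,\dots,n\}$, so that $f'_j=f_j$ for every $j$. After that, the uniqueness of $\pi$ and the collapse of the three diagrams are routine.
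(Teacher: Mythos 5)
Your argument stands or falls with the claim that, after identifying $G(X,S)$ with $G(X',S')$, the two cocycles literally coincide, $\pi=\pi'$, because ``the isomorphism can be arranged to send the $i$-th generator to the $i$-th generator'' so that $f'_j=f_j$ for all $j$. This is a genuine gap, and it is exactly the step you flagged as the main obstacle. The identification supplied by Lemma~\ref{lem_equiv_sol} is the one induced by the equivalence $\alpha$, which sends $x_i$ to $\alpha(x_i)$, and nothing in the hypotheses forces $\alpha$ to act as the identity on the index set: the normalizations $\pi(x_i)=t_i$ and $\pi'(x_i)=t_i$ are simply the definitions of the two cocycles on their own labelled generating sets (Theorem~\ref{prop_etingof}), so they impose no constraint on $\alpha$. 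Pulled back along the identification, $\pi'$ sends $x_i\mapsto t_{\alpha(i)}$ and the primed action of $x_j$ is $\alpha\circ f_j\circ\alpha^{-1}$, so in general neither the values on generators nor the actions $\bullet$ and $\bullet'$ agree, and the uniqueness-of-cocycle step has nothing to bite on. If instead you relabel $X'$ so that $\alpha$ becomes the identity on indices, then $S'=S$ and the two quadruples coincide, which trivializes the statement and contradicts the point of this subsection, stressed in the paper, that the cocycles $\pi$ and $\pi'$ may genuinely differ even though the structure groups are isomorphic.

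The paper's proof avoids any claim that $\pi=\pi'$. For the first diagram it takes $\varphi\in\Phi_\pi$, defines $\sigma'=\pi'\circ\varphi\circ\pi'^{-1}$, and, since $\varphi\in\operatorname{Aut}(G(X,S))$, applies the criterion of Prop.~\ref{prop_phi_hom'} \emph{with respect to} $\pi'$ to conclude $\sigma'\in\Im_{\pi'}$, hence $\varphi=\pi'^{-1}\circ\sigma'\circ\pi'\in\Phi_{\pi'}$; the reverse inclusion is symmetric, and the two mixed diagrams are then identified with $\Phi_\pi$ and $\Phi_{\pi'}$ using only the agreement of $\pi$ and $\pi'$ on the generators. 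Your treatment of Figures~2 and~3, which rests on the asserted identity $\pi\circ\varphi\circ\pi^{-1}=\pi'\circ\varphi\circ\pi'^{-1}$, inherits the same unjustified assumption. To repair your proof you should abandon the attempt to show $\pi=\pi'$ and instead transfer each automorphism through the other cocycle and invoke Prop.~\ref{prop_phi_hom'}, as the paper does.
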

  \begin{proof}
  Assume the case of Fig. 3.1.  Let $\varphi \in \Phi_{\pi}$ defined by  $\sigma \in \Im_\pi$.
 We define the bijection $\sigma':\mathbb{Z}^{n} \rightarrow \mathbb{Z}^{n}$ by $\sigma'=\pi' \circ \varphi \circ \pi'^{-1}$.  Since $\varphi \in \Phi_{\pi}$, $\varphi \in  \operatorname{Aut}(G(X,S))$. So, from Prop.\ref{prop_phi_hom'} applied to $\pi'$, $\sigma' \in  \Im_{\pi'}$. As, $\varphi=\pi'^{-1} \circ \sigma' \circ \pi'$, we have $\varphi \in \Phi_{\pi'}$, that is 
$\Phi_{\pi} \subseteq \Phi_{\pi'}$. The reverse inclusion is obtained in the same way.
  Assume the case of Fig. 3.2 or Fig. 3.3  That is, $\varphi$ is defined by  $\varphi= \pi^{-1} \circ \sigma \circ \pi'$ or $\varphi= \pi'^{-1} \circ \sigma \circ \pi$. In the case of Fig. 3.2, we obtain the group of automorphisms $\Phi_\pi$, as  $\pi'(x_i)=\pi(x_i)=t_i$.
In the  case of Fig. 3.3, we obtain the group of automorphisms $\Phi_\pi'$, as  $\pi(x_i)=\pi'(x_i)=t_i$.
From the above, we have $\Phi_\pi=\Phi_\pi'$.
\end{proof}
  
 \section{A  group of automorphisms that preserves the Garside structure} \label{sec_generalized}
 We show the set of automorphisms of 
   $G(X,S)$ induced by permutation  matrices in $\Im_\pi$
   is a subgroup of $\operatorname{Aut}(G(X,S))$ that preserves the Garside structure of $G(X,S)$, and more generally the subgroup of automorphisms induced by generalized permutation  matrices in $\Im_\pi$ preserves the properties of an extended  Garside structure of $G(X,S)$. We begin with the description of the subgroup  of generalized permutation matrices in  $\Im_\pi$.
 \subsection{Properties of the subgroup of generalized permutation matrices in  $\Im_\pi$}
 We recall that in  a generalized permutation matrix there is exactly one non-zero entry in each row and each column. Unlike a permutation matrix, where the non-zero entry must be 1, for a generalized permutation matrix  in  $GL_n (\mathbb{Z})$  the non-zero entry is $\pm 1$. We use the following notation.  Let $\sigma$ be a generalized permutation matrix  in $\Im_\pi$. Let  $w_j$ be  the element in $\mathbb{Z}^{n}$ representing the  $j$-th  column $(w_j)$, where $w_j=\sigma(t_j)$. If $\sigma_{i_j,j}=  \pm 1$, we define the permutation  $\dot{\sigma}$ by $\dot{\sigma}(j)=i_j$ and  we write $w_j=t_{\dot{\sigma}(j)}^{\epsilon_j}$, where  $\epsilon_j=\pm 1$.
  \begin{lem}\label{lem_elts_same_orbit_permuatation_column}
   Let  $(X,S)$ be a non-degenerate symmetric  solution, with  structure group  $G(X,S)$. Let  $\Im_\pi$  be the subgroup of $GL_n (\mathbb{Z})$ defined in Theorem \ref{theo_im_group_auto}. Let  $\sigma \in\Im_\pi$. Let  $w_j$ and $w_k$ be  elements in $\mathbb{Z}^{n}$ representing the  $j$-th  column $(w_j)$, and the  $k$-th  column $(w_k)$ of $\sigma$. Assume $x_j,x_k \in X$ belong to the same indecomposable component of  $(X,S)$. Then \\
    $(i)$ $(w_j)$ is a  permutation of the rows of   $(w_k)$.\\
    $(ii)$ If additionally, $\sigma$ is a generalized permutation matrix with  $w_j=t_{i_j}^{\epsilon_j}$, $w_k=t_{i_k}^{\epsilon_k}$ and $\epsilon_j,\epsilon_k=\pm 1$, then $\epsilon_j=\epsilon_k$.
     \end{lem}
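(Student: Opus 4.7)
The plan is to use the matrix identity that defines membership in $\Im_\pi$, namely $A_j\,\sigma = \sigma\,(f_j^{-1})$ for all $j$. As was extracted in the proof of Prop.~\ref{prop_phi_hom_matrices}, this identity is equivalent to the column-wise relation
$$A_j\, w_i \;=\; w_{f_j(i)}, \qquad 1 \leq i,j \leq n.$$
Since $A_j$ is a permutation matrix, this says that the column $w_{f_j(i)}$ is obtained from $w_i$ by a permutation of its rows. Replacing $i$ by $f_j^{-1}(i)$ and using $A_j^{-1}$ yields the symmetric statement $A_j^{-1} w_i = w_{f_j^{-1}(i)}$, so the relation holds for both $f_j$ and its inverse.

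For $(i)$, I would recall that $x_j$ and $x_k$ lie in the same indecomposable component of $(X,S)$ if and only if they belong to the same orbit of the left action of $G(X,S)$ on $X$ (Thm.~\ref{prop_etingof}$(i)$), that is, the same orbit of the subgroup of $\operatorname{Sym}(X)$ generated by $\{f_1,\dots,f_n\}$. Hence there exist indices $j_1,\dots,j_\ell$ and signs $\varepsilon_1,\dots,\varepsilon_\ell \in \{\pm 1\}$ such that
$$k \;=\; f_{j_\ell}^{\varepsilon_\ell}\cdots f_{j_1}^{\varepsilon_1}(j).$$
Iterating the column identity above then gives $w_k = A_{j_\ell}^{\varepsilon_\ell}\cdots A_{j_1}^{\varepsilon_1}\, w_j$, and since the product of permutation matrices is a permutation matrix, $w_k$ is a permutation of the rows of $w_j$.

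For $(ii)$, under the additional assumption that $\sigma$ is a generalized permutation matrix, each column has a unique non-zero entry lying in $\{\pm 1\}$. By $(i)$ we have $w_k = P\,w_j$ for some permutation matrix $P$; permuting the entries of a vector whose only non-zero coordinate equals $\epsilon_j$ yields a vector whose only non-zero coordinate still equals $\epsilon_j$. Therefore $\epsilon_k = \epsilon_j$, which is exactly the claim.

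The only delicate point is the identification \textbf{same indecomposable component} $\iff$ \textbf{same $\langle f_1,\dots,f_n\rangle$-orbit}; this follows from the characterization of decomposability in \cite{etingof} via transitivity of the structure-group action $x \mapsto f_x^{-1}$ on $X$. Once this is in hand, the argument is simply a bookkeeping exercise on the matrix equation $A_j \sigma = \sigma (f_j^{-1})$, with no further case analysis required.
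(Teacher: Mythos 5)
Your argument is correct and is essentially the paper's own proof: both express that $x_j,x_k$ lie in the same $\langle f_1,\dots,f_n\rangle$-orbit and then iterate the defining condition of $\Im_\pi$ column by column (you use the matrix form $A_j\sigma=\sigma(f_j^{-1})$, the paper the equivalent action form $(\pi^{-1}(w_j))^{-1}\bullet w_i=w_{f_j(i)}$ of Eqn.~\ref{formule_action}), so that one column is a permutation matrix applied to the other, with $(ii)$ following immediately. The only cosmetic difference is that you explicitly allow inverses $f_j^{-1}$ via $A_j^{-1}w_i=w_{f_j^{-1}(i)}$, whereas the paper writes the orbit relation with a positive word in the $f_j$'s and inducts on its length.
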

  \begin{proof}
  $(i)$ Since $x_j,x_k \in X$ belong to the same indecomposable component of  $(X,S)$, there  exists a permutation $\tilde{f_j} \in \langle f_1,..,f_n \rangle$,  $\tilde{f_j}=f_{j_1}\,f_{j_2}..f_{j_l}$, such that $j=\tilde{f_j} (k)$. The proof is by induction on $l$. If $l=1$, then $\tilde{f_j}=f_{j_1}$ and $w_j= w_{f_{j_1}(k)}$. From Eqn. \ref{formule_action}, $(\pi^{-1}(w_{j_1}))^{-1}\bullet w_k=w_{f_{j_1}(k)}$. So,  $(w_j)$ is a permutation of  the rows of   $(w_k)$ according to the action of  $(\pi^{-1}(w_{j_1}))^{-1}$. Assume $l>1$. We have  $((\pi^{-1}(w_{j_1}))^{-1}\,..(\pi^{-1}(w_{j_l}))^{-1}) \bullet w_k=$ $((\pi^{-1}(w_{j_1}))^{-1}\,..(\pi^{-1}(w_{j_{l-1}})^{-1}) \bullet w_{f_{j_l}(k)}\\ =\,w_{f_{j_1}\,f_{j_2}..f_{j_l}(k)}=w_j$, using the action of $G(X,S)$ and $l$ times Eqn. \ref{formule_action}. So,  $(w_j)$ is a permutation of  the rows of   $(w_k)$ according to the action of   $(\pi^{-1}(w_{j_1}))^{-1}\,..(\pi^{-1}(w_{j_l}))^{-1}$. \\
  $(ii)$  From $(i)$, $(w_j)$ is a permutation of  the rows of   $(w_k)$, so $(w_j)$ and $(w_k)$ have elements with the same sign.
  \end{proof}
    
  We show that if $\sigma$ is a generalized permutation matrix  in  $GL_n (\mathbb{Z})$ , the condition for $\sigma $ to belong to $\Im_\pi$ or  equivalently for $\varphi$  to belong to $\operatorname{Aut}(G(X,S))$  described in  Eqn.\ref{equation_hom_action} has a simple form.
  \begin{prop}\label{prop_phi_aut_permut_class}
   Let  $(X,S)$ be a non-degenerate  symmetric  solution, with  structure group  $G(X,S)$. Let $\sigma$ be a generalized permutation matrix in  $GL_n (\mathbb{Z})$  defined by  $\sigma(t_j)=t_{\dot{\sigma}(j)}^{\epsilon_j}$, where $\epsilon_j=\pm 1$, $1 \leq j \leq n$. Let $\varphi$ be defined by $\sigma$.  Then  \\
   $(i)$ If $\epsilon_j=1$,  $\forall 1 \leq j \leq n$,  then 
    $\varphi \in \operatorname{Aut}(G(X,S))$ $\Leftrightarrow$  $\dot{\sigma} \circ f_j \circ \dot{\sigma}^{-1} = f_{\dot{\sigma}(j)}$, $\forall 1 \leq j \leq n$.\\
   $(ii)$  If  $(X,S)$ is of class 2, then   $\varphi \in \operatorname{Aut}(G(X,S))$ $\Leftrightarrow$  $\dot{\sigma}\circ f_j \circ \dot{\sigma}^{-1} = f_{\dot{\sigma}(j)}$, $\forall 1 \leq j \leq n$.\\
     $(iii)$ If  $(X,S)$ is of class $m>2$, then \\  $\varphi \in \operatorname{Aut}(G(X,S))$ if and only if 
       $ \left\{\begin{array}{lcl}
     \dot{\sigma} \circ f_j \circ \dot{\sigma}^{-1} = f_{\dot{\sigma}(j)}, && \epsilon_j=1\\  \dot{\sigma} \circ f_j \circ \dot{\sigma}^{-1} = f^{-1}_{g^{-1}_{\dot{\sigma}(j)}\dot{\sigma}(j)},&& \epsilon_j=-1
     \end{array}\right\}$
  \end{prop}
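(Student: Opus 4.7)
The strategy is to specialize the general criterion of Proposition~\ref{prop_phi_hom'}, namely $\varphi\in\operatorname{Aut}(G(X,S))\iff(\pi^{-1}(w_j))^{-1}\bullet w_i=w_{f_j(i)}$ for all $1\le i,j\le n$, to a generalized permutation matrix $\sigma$ with $w_j=t_{\dot\sigma(j)}^{\epsilon_j}$, making both sides of that equality fully explicit by means of the computation rules developed in Section~\ref{sec_prelimin1}.

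The first step is to unpack $\pi^{-1}(w_j)$. Lemma~\ref{lem_calcul_pi-1}(ii) gives $\pi^{-1}(t_{\dot\sigma(j)})=x_{\dot\sigma(j)}$ when $\epsilon_j=1$, while Lemma~\ref{lem_calcul_pi-1}(i) combined with Lemma~\ref{lem_etingof_inverseT} gives $\pi^{-1}(t_{\dot\sigma(j)}^{-1})=x_{g^{-1}_{\dot\sigma(j)}(\dot\sigma(j))}^{-1}$ when $\epsilon_j=-1$. Because the left action of $G(X,S)$ sends $x\in X$ to $f_x^{-1}$ and hence $x^{-1}$ to $f_x$, Theorem~\ref{prop_etingof}(ii) translates into: the inverse $(\pi^{-1}(w_j))^{-1}$ acts on $\mathbb{Z}^n$ as $f_{\dot\sigma(j)}$ if $\epsilon_j=1$ and as $f^{-1}_{g^{-1}_{\dot\sigma(j)}(\dot\sigma(j))}$ if $\epsilon_j=-1$. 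Plugging this back into the criterion and separating indices from exponents splits it into the exponent condition $\epsilon_i=\epsilon_{f_j(i)}$ together with the permutation identity $\dot\sigma\circ f_j\circ\dot\sigma^{-1}=f_{\dot\sigma(j)}$ (case $\epsilon_j=1$) or $\dot\sigma\circ f_j\circ\dot\sigma^{-1}=f^{-1}_{g^{-1}_{\dot\sigma(j)}(\dot\sigma(j))}$ (case $\epsilon_j=-1$). This already proves (iii), and (i) follows immediately since the hypothesis forces all $\epsilon_j=1$, making the exponent equality vacuous.

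For (ii) I would exploit the class-$2$ identities $f_x\circ f_{T(x)}=\operatorname{Id}_X$ and $T=T^{-1}$. The second gives $g^{-1}_{\dot\sigma(j)}(\dot\sigma(j))=T(\dot\sigma(j))=f^{-1}_{\dot\sigma(j)}(\dot\sigma(j))$, and the first gives $f_{T(\dot\sigma(j))}=f^{-1}_{\dot\sigma(j)}$. Substituting these in collapses the $\epsilon_j=-1$ identity to $\dot\sigma\circ f_j\circ\dot\sigma^{-1}=(f_{T(\dot\sigma(j))})^{-1}=f_{\dot\sigma(j)}$, which merges with the $\epsilon_j=1$ identity and yields the uniform statement of (ii).

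The main subtlety I anticipate is the exponent consistency $\epsilon_i=\epsilon_{f_j(i)}$, which does not appear in the displayed statement. In the forward direction it is part of the full criterion obtained above; in the reverse direction I would invoke Lemma~\ref{lem_elts_same_orbit_permuatation_column}(ii), according to which indices lying in the same $\langle f_1,\dots,f_n\rangle$-orbit must share a common sign, so consistency is automatic on each indecomposable component and the displayed permutation identities on $\dot\sigma$ are the essential content of the characterization.
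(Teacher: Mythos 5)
Your proposal is essentially the paper's own proof: it specializes the criterion of Prop.~\ref{prop_phi_hom'} to a generalized permutation matrix, computes $\pi^{-1}(w_j)$ in the two sign cases via the rules of Section~\ref{sec_prelimin1} (so that $(\pi^{-1}(w_j))^{-1}$ acts by $f_{\dot{\sigma}(j)}$ or $f^{-1}_{g^{-1}_{\dot{\sigma}(j)}\dot{\sigma}(j)}$), and collapses the $\epsilon_j=-1$ identity in class $2$ using $f^{-1}_{g^{-1}_x(x)}=f_x$, exactly as in the paper. Even your handling of the sign-consistency condition $\epsilon_i=\epsilon_{f_j(i)}$ through Lemma~\ref{lem_elts_same_orbit_permuatation_column}$(ii)$ mirrors the paper, which invokes the same lemma at the same point of the argument.
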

  \begin{proof}
  From Prop. \ref{prop_phi_hom'},  $\varphi \in \operatorname{Aut}(G(X,S))$ if and only if $\sigma \in \Im_\pi$  if and only if  Eqn. \ref{equation_hom_action} holds, that is  $(\pi^{-1}(w_{j}))^{-1}\bullet w_i=w_{f_{j}(i)}$ for all $1 \leq i,j \leq n$. We  show  Eqn. \ref{equation_hom_action} is equivalent to the condition cited in each case. Generally, we have  $(\pi^{-1}(w_{j}))^{-1}\bullet w_i=w_{f_{j}(i)}$ $\Leftrightarrow$ $(\pi^{-1}(w_{j}))^{-1}\bullet \sigma(t_{i})=\sigma(t_{f_{j}(i)})$ $\Leftrightarrow$ $(\pi^{-1}(w_{j}))^{-1}\bullet t_{\dot{\sigma}(i)}^{\epsilon}=t_{\dot{\sigma} \circ f_{j}(i)}^{\epsilon}$, from the definition of $\sigma$ and from lemma \ref{lem_elts_same_orbit_permuatation_column}$(ii) $, $\epsilon_i=\epsilon_{f_{j}(i)}=\epsilon$. Now, we need to compute $(\pi^{-1}(w_{j}))^{-1}$ in each case. \\
   $(i)$ If $\epsilon_j=1$,  then $w_j=t_{\dot{\sigma}(j)}$ and $(\pi^{-1}(w_{j}))^{-1} =x_{\dot{\sigma}(j)}^{-1}$. So, $(\pi^{-1}(w_{j}))^{-1}\bullet t_{\dot{\sigma}(i)}^{\epsilon}=t_{\dot{\sigma} \circ f_{j}(i)}^{\epsilon}$,
 $\Leftrightarrow$  
   $ t_{f_{\dot{\sigma}(j)}\circ \dot{\sigma}(i)}^{\epsilon}=t_{\dot{\sigma} \circ f_{j}(i)}^{\epsilon}$  $\Leftrightarrow$  
      $ f_{\dot{\sigma}(j)}\circ \dot{\sigma}(i)=\dot{\sigma} \circ f_{j}(i)$.\\
  $(ii),(iii)$ If $\epsilon_j=-1$,  then $w_j=t_{\dot{\sigma}(j)}^{-1}$ and $\pi^{-1}(w_{j}) =x^{-1}_{g^{-1}_{\dot{\sigma}(j)}\dot{\sigma}(j)}$, from lemma \ref{lem_frozen_act_trivially}$(i)$. So,  $(\pi^{-1}(w_{j}))^{-1} =x_{g^{-1}_{\dot{\sigma}(j)}\dot{\sigma}(j)}$ and $(\pi^{-1}(w_{j}))^{-1}\bullet t_{\dot{\sigma}(i)}^{\epsilon}=t_{\dot{\sigma} \circ f_{j}(i)}^{\epsilon}$,
       $\Leftrightarrow$  
         $ f^{-1}_{g^{-1}_{\dot{\sigma}(j)}\dot{\sigma}(j)}\circ \dot{\sigma}(i)=\dot{\sigma} \circ f_{j}(i)$  $\Leftrightarrow$  
           $\dot{\sigma} \circ f_j \circ \dot{\sigma}^{-1} = f^{-1}_{g^{-1}_{\dot{\sigma}(j)}\dot{\sigma}(j)}$. If  $(X,S)$ is of class 2, then  $f^{-1}_{g^{-1}_{\dot{\sigma}(j)}\dot{\sigma}(j)}=f_{\dot{\sigma}(j)}$ (see remark \ref{remark_defn_class}) and   we have           
           $\sigma \in \Im_\pi$ $\Leftrightarrow$  $\dot{\sigma} \circ f_j \circ \dot{\sigma}^{-1} = f_{\dot{\sigma}(j)}$, $\forall 1 \leq j \leq n$.
  \end{proof}
\begin{ex}
Consider in  Ex. \ref{exemples_aut_group} the group $G_3$ of class $3$ with $f_1=f_2=f_3=(1,2,3)$ and the matrix $\sigma_4 \in \Im_\pi$, with  $\dot{\sigma}_4=(2,3$). For $1 \leq i \leq  3$, $\dot{\sigma}_4 \circ f_i\circ \dot{\sigma}_4^{-1}=\dot{\sigma}_4 \circ (1,2,3) \circ \dot{\sigma}_4^{-1}=(1,3,2)=f_i^{-1}$, so $\sigma_4$ satisfies Prop. \ref{prop_phi_aut_permut_class}$(iii)$, as expected. 
\end{ex}
 \subsection{Preservation of the Garside structure by groups of automorphisms} Our first purpose is to show Thm. \ref{thm_permut_preserve_garside} which states that the group of automorphisms  induced by the  permutation matrices in $\Im_\pi$ preserves entirely the Garside structure.
 \begin{thm}\label{thm_permut_preserve_garside}
  Let  $(X,S)$ be a non-degenerate symmetric  solution of class $m$, with  structure group  $G(X,S)$. Let $\dot{\sigma}$ be a  permutation matrix in  $\Im_\pi$ and let  $\varphi \in \operatorname{Aut}(G(X,S))$ be induced  by $\dot{\sigma}$.  Then 
  $\varphi$ is a length-preserving isomorphism of the bounded lattice $\operatorname{Div}(\Delta)$ w.r to right-divisibility and for all $s,t \in \operatorname{Div}(\Delta)$: \\
   $(i)$  $\varphi(s\vee_R t)= \varphi(s)\vee_R \varphi(t)$.\\  $(ii)$ $\varphi(s\wedge_R t)= \varphi(s)\wedge_R \varphi(t)$\\ $(iii)$ $\varphi(\operatorname{Div}(\Delta))=\operatorname{Div}(\Delta)$, with $\varphi(\Delta)=\Delta$ and $\varphi(1)=1$.\\
   $(iv)$  $\varphi(N)=N$, where $N$ is the free abelian subgroup of rank $n$ of  $G(X,S)$ generated by the frozen elements.\\
   Furthermore,   $\varphi$ induces an automorphism of the finite quotient group  $G(X,S)/N$ and  a permutation of the elements in $\Div{\Delta}$.
   \end{thm}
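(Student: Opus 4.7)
My plan is to first observe that the hypothesis $\dot\sigma \in \Im_\pi$ being a permutation matrix is extremely strong: writing $\sigma(t_j)=t_{\dot\sigma(j)}$ and applying Lemma \ref{lem_calcul_pi-1}$(ii)$, I get
\[
\varphi(x_j)\,=\,\pi^{-1}(\sigma(\pi(x_j)))\,=\,\pi^{-1}(t_{\dot\sigma(j)})\,=\,x_{\dot\sigma(j)},
\]
so $\varphi$ simply permutes the generating set $X$. Since the underlying Garside monoid $M$ is generated (as a monoid) by $X$, and $\varphi$ and $\varphi^{-1}$ both send $X$ bijectively onto $X$, $\varphi$ restricts to a monoid automorphism of $M$. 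This is the key step on which everything else rests.

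From the fact that $\varphi$ is a monoid automorphism of the Garside monoid $M$, I would immediately deduce the three lattice-theoretic assertions. Length in a Garside monoid is well defined on $M$ (every two expressions of an element as a product of atoms have the same length), and $\varphi$ sends each atom to an atom, so $\varphi$ is length-preserving. Monoid automorphisms preserve both left and right divisibility, hence they preserve right-lcm and right-gcd, giving $(i)$ and $(ii)$. For $(iii)$, I would use Prop. \ref{prop_simple_lcm}$(ii)$: $\Delta=\vee_R X$, so
\[
\varphi(\Delta)\,=\,\vee_R\,\varphi(X)\,=\,\vee_R\,X\,=\,\Delta,
\]
because $\varphi$ merely permutes $X$. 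Since $\varphi$ preserves right-divisibility and fixes $\Delta$, it sends $\Div(\Delta)$ into itself, and the same for $\varphi^{-1}$ gives $\varphi(\Div(\Delta))=\Div(\Delta)$; clearly $\varphi(1)=1$.

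For $(iv)$, Thm.~\ref{theo_im_group_auto}$(iii)$ already gives $\varphi(N)\subseteq N$. Since $\dot\sigma^{-1}$ is again a permutation matrix and $\Im_\pi$ is a group (Thm.~\ref{theo_im_group_auto}$(i)$), $\dot\sigma^{-1}\in\Im_\pi$, so $\varphi^{-1}\in\Phi_\pi$ is induced by a permutation matrix, and the same argument yields $\varphi^{-1}(N)\subseteq N$, hence $\varphi(N)=N$. Consequently $\varphi$ passes to the quotient to yield an automorphism of the finite group $G(X,S)/N$, and by $(iii)$ it simultaneously restricts to a permutation of the finite set $\Div(\Delta)$.

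I do not expect any serious obstacle: the only nontrivial point is the initial observation that $\varphi(X)=X$, which upgrades $\varphi$ from a group automorphism to a monoid automorphism. Once that is in hand, all of $(i)$–$(iv)$ follow from standard Garside-theoretic facts together with Prop. \ref{prop_simple_lcm} and Thm.~\ref{theo_im_group_auto}$(iii)$. Care is needed only in citing that $\Delta$ is simultaneously the left and right lcm of $X$ (Prop. \ref{prop_simple_lcm}$(ii)$), which is what allows the conclusion to apply to both the left and right lattice structures of $\Div(\Delta)$.
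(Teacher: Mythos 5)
Your proof is correct, but it takes a genuinely different route from the paper's. The paper never upgrades $\varphi$ to a monoid automorphism: it first shows (Prop.~\ref{prop_phi_aut_permut_class} and Prop.~\ref{prop_condition_aut_for_permutation}) that a permutation matrix $\dot{\sigma}\in\Im_\pi$ must preserve the left complement, $\dot{\sigma}(j\setminus k)=\dot{\sigma}(j)\setminus\dot{\sigma}(k)$, then uses $x_i\vee_R x_j=(x_i\setminus x_j)\,x_i$ to get compatibility of $\varphi$ with lcms of generators, and finally invokes Prop.~\ref{prop_simple_lcm}(iii),(iv) --- every $s\in\Div{\Delta}$ is the lcm of a unique subset $X_r(s)\subseteq X$ --- to propagate the statement to all of $\Div{\Delta}$, treating $s\vee_R t$ and $s\wedge_R t$ through $X_r(s)\cup X_r(t)$ and $X_r(s)\cap X_r(t)$; for $(iv)$ it computes $\varphi(\theta_i)=\theta_{\dot{\sigma}(i)}$ explicitly. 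You instead make the single observation $\varphi(x_j)=x_{\dot{\sigma}(j)}$, so $\varphi$ permutes $X$ and hence restricts, via the embedding of the Garside monoid $M$ in its group of fractions $G(X,S)$, to a monoid automorphism of $M$; then (i)--(iii) become formal consequences of $\varphi$ being an automorphism of the right-divisibility poset together with $\Delta=\vee_R X$, and (iv) follows from Thm.~\ref{theo_im_group_auto} applied to both $\varphi$ and $\varphi^{-1}\in\Phi_\pi$ (using that $\sigma^{-1}\rightarrowtail\varphi^{-1}$). Your route is shorter and more conceptual, and your argument for (iv) in fact shows $\varphi(N)=N$ for \emph{every} $\varphi\in\Phi_\pi$, not only those induced by permutation matrices; the paper's more computational route buys extra information that is reused later: the complement-preservation criterion of Prop.~\ref{prop_condition_aut_for_permutation} feeds into the generalized-permutation case (Prop.~\ref{prop_gene.permut_preserve_extended_garside}), and the explicit formula $\varphi(\theta_i)=\theta_{\dot{\sigma}(i)}$ records precisely how $\varphi$ permutes the frozen generators of $N$.

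One justification should be repaired. It is not true in an arbitrary Garside monoid that all decompositions of an element into atoms have the same length: in the Garside monoid $\operatorname{Mon}\langle x,y\mid x^2=y^3\rangle$ the element $\Delta=x^2=y^3$ has atom-decompositions of lengths $2$ and $3$. In the present situation length is well defined because the defining relations are homogeneous --- each side of a relation has length $2$ (Thm.~\ref{thm__strgp_garside}) --- so $M$ is graded by word length, and since your $\varphi$ sends generators to generators it preserves this grading. This is a one-line fix and does not affect the rest of your argument.
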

 In order to prove Thm. \ref{thm_permut_preserve_garside}, we show in the following proposition, that  a bijective function $\varphi$ induced by  a  permutation matrix $\dot{\sigma}$ is an automorphism of   $G(X,S)$ if and only if $\dot{\sigma}$  preserves the left complement in some sense. We use the following notation: if $x_j \setminus x_k=x_m$ and $\dot{\sigma}(m)=m'$, then we write $\dot{\sigma}(j\setminus k)=m'$
 \begin{prop}\label{prop_condition_aut_for_permutation}
  Let  $(X,S)$ be a non-degenerate symmetric  solution, with  structure group  $G(X,S)$. Let $\sigma$ be a generalized permutation matrix in  $GL_n (\mathbb{Z})$  defined by  $\sigma(t_j)=t_{\dot{\sigma}(j)}^{\epsilon_j}$, where $\epsilon_j=\pm 1$, $1 \leq j \leq n$. Let $\varphi$ be defined by $\sigma$. If $\sigma=\dot{\sigma}$ or $(X,S)$ is of class 2, then  \\
  $(i)$  $\varphi \in \operatorname{Aut}(G(X,S))$ if and only if $\dot{\sigma}(j\setminus k)=\dot{\sigma}(j)\setminus \dot{\sigma}(k)$ for all $1 \leq j,k \leq n$.\\
  $(ii)$ If  $\varphi \in \operatorname{Aut}(G(X,S))$, then $\dot{\sigma}$ belongs to $C_{S_n}(T)$ the centralizer of $T$ in $S_n$.
\end{prop}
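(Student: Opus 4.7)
The plan is to reduce both assertions to the permutation-level characterization already supplied by Proposition \ref{prop_phi_aut_permut_class}. Under either hypothesis ($\sigma=\dot{\sigma}$, so that all $\epsilon_j=1$, or $(X,S)$ of class $2$), Proposition \ref{prop_phi_aut_permut_class} tells us that $\varphi\in\operatorname{Aut}(G(X,S))$ is equivalent to the conjugation identity
\[
\dot{\sigma}\circ f_j\circ \dot{\sigma}^{-1}\,=\,f_{\dot{\sigma}(j)}, \qquad \forall\, 1\leq j\leq n.
\]
The remaining task is simply to translate this identity into the language of left complements, and then to extract the centralizer statement.

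For (i), I would first take inverses in the above identity, yielding the equivalent condition $\dot{\sigma}\circ f_j^{-1}\circ \dot{\sigma}^{-1}=f_{\dot{\sigma}(j)}^{-1}$, and evaluate it at an arbitrary index $k$ to get
\[
\dot{\sigma}\bigl(f_j^{-1}(k)\bigr)\,=\,f_{\dot{\sigma}(j)}^{-1}\bigl(\dot{\sigma}(k)\bigr).
\]
By Proposition \ref{prop_simple_lcm}$(i)$, $x_j\setminus x_k=f_j^{-1}(x_k)$, so in the index notation introduced before the statement, the left-hand side is $\dot{\sigma}(j\setminus k)$ and the right-hand side is $\dot{\sigma}(j)\setminus \dot{\sigma}(k)$. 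Each step is reversible, so this establishes the desired equivalence.

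For (ii), I would specialize (i) to $k=j$. By Lemma \ref{lem_etingof_inverseT} we have $T(x_j)=f_j^{-1}(x_j)$; in index form this reads $T(j)=j\setminus j$, and likewise $T(\dot{\sigma}(j))=\dot{\sigma}(j)\setminus \dot{\sigma}(j)$. The equality $\dot{\sigma}(j\setminus j)=\dot{\sigma}(j)\setminus \dot{\sigma}(j)$ provided by (i) therefore reads $\dot{\sigma}\circ T=T\circ \dot{\sigma}$ on all of $\{1,\dots,n\}$, giving $\dot{\sigma}\in C_{S_n}(T)$.

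I do not expect any real obstacle beyond bookkeeping: the only thing to be careful about is keeping straight the distinction between an element $x_k\in X$ and its index $k$, and the direction of the conjugation when one passes from $f_j$ to $f_j^{-1}$. Once these conventions are fixed, both claims follow by substitution, with (ii) being an immediate corollary of (i) obtained by putting $k=j$.
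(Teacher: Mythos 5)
Your argument for part (i) is correct and is essentially the paper's proof: invoke Proposition \ref{prop_phi_aut_permut_class} under either hypothesis to get $\dot{\sigma}\circ f_j\circ\dot{\sigma}^{-1}=f_{\dot{\sigma}(j)}$, pass to inverses, evaluate at $\dot{\sigma}(k)$, and translate via Proposition \ref{prop_simple_lcm}$(i)$. (One small point of hygiene, which the paper also glosses over: the translation $f_j^{-1}(k)=j\setminus k$ is only licensed for $j\neq k$, since Proposition \ref{prop_simple_lcm}$(i)$ assumes $x\neq y$ and the index convention $\dot{\sigma}(j\setminus k)$ was only set up when $x_j\setminus x_k\in X$; this is harmless for the equivalence in (i), because the $k=j$ instances of the function-level identity $\dot{\sigma}(f_j^{-1}(k))=f_{\dot{\sigma}(j)}^{-1}(\dot{\sigma}(k))$ follow from the $k\neq j$ instances by bijectivity of all maps involved.)

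Part (ii), however, as written rests on a step that is not available: you identify $T(j)$ with $j\setminus j$, but under the paper's definition of the left complement one has $x_j\vee_R x_j=x_j$, hence $x_j\setminus x_j=1$, not $T(x_j)=f_j^{-1}(x_j)$; the formula $x\setminus y=f_x^{-1}(y)$ is stated only for $x\neq y$. So (ii) is not literally a specialization of the statement of (i) to $k=j$. The repair is immediate and is what the paper does: do not pass through the complement at all, but evaluate the conjugation identity $\dot{\sigma}\circ f_j^{-1}=f_{\dot{\sigma}(j)}^{-1}\circ\dot{\sigma}$ at the index $j$ itself, which gives $\dot{\sigma}(f_j^{-1}(j))=f_{\dot{\sigma}(j)}^{-1}(\dot{\sigma}(j))$, i.e. $\dot{\sigma}(T(j))=T(\dot{\sigma}(j))$ by the definition of $T$ in Lemma \ref{lem_etingof_inverseT}. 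Note that this is exactly the intermediate displayed identity you already established inside your proof of (i), so the fix costs one line; but as phrased, the deduction ``(ii) follows from (i) with $k=j$'' is not valid.
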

\begin{proof}
$(i)$ From  Prop. \ref{prop_phi_aut_permut_class}, under these assumptions, $\varphi \in \operatorname{Aut}(G(X,S))$ $\Leftrightarrow$  $\dot{\sigma} \circ f_j^{-1} \circ \dot{\sigma}^{-1} = f^{-1}_{\dot{\sigma}(j)}$, $\forall 1 \leq j \leq n$. Let $ 1 \leq k \leq n$. We evaluate $f^{-1}_{\dot{\sigma}(j)}$ and $\dot{\sigma} \circ f_j^{-1} \circ \dot{\sigma}^{-1}$  on  $\dot{\sigma}(k)$. We have $f^{-1}_{\dot{\sigma}(j)}\dot{\sigma}(k)=\dot{\sigma}(j)\setminus \dot{\sigma}(k)$ and  $\dot{\sigma} \circ f_j^{-1} \circ \dot{\sigma}^{-1}(\dot{\sigma}(k))\,= \,\dot{\sigma} \circ f_j^{-1}(k)=\dot{\sigma}(j\setminus k)$, from Prop.\ref{prop_simple_lcm}$(i)$. So,  $\varphi \in \operatorname{Aut}(G(X,S))$ if and only if $\dot{\sigma}(j\setminus k)=\dot{\sigma}(j)\setminus \dot{\sigma}(k)$ for all $1 \leq j,k \leq n$.\\
$(ii)$ Assume  $\varphi \in \operatorname{Aut}(G(X,S))$. From Prop. \ref{prop_phi_aut_permut_class}$(i)$,$(ii)$,  
$\dot{\sigma} \circ f_j^{-1} = f_{\dot{\sigma}(j)}^{-1}\,\circ \dot{\sigma}$, $\forall 1 \leq j \leq n$.
In particular, we have
$\dot{\sigma} \circ f_j^{-1}(j) = f_{\dot{\sigma}(j)}^{-1}\,\circ \dot{\sigma}(j)$, that is $\dot{\sigma} (T(j)) = T(\dot{\sigma}(j))$.
\end{proof}
 \emph{ Proof of Theorem \ref{thm_permut_preserve_garside}}\\
  \begin{proof}
  $(i)$,  $(ii)$, $(iii)$ By definition $x_i\vee_R x_j= (x_i\setminus x_j)\,x_i$. So, $\varphi(x_i\vee_R x_j)=\,\varphi(x_i\setminus x_j)\varphi(x_i)=\,(\varphi(x_i)\setminus \varphi(x_j))\,\varphi(x_i)=\varphi(x_i)\vee_R \varphi(x_j)$, using first $\varphi \in \operatorname{Aut}(G(X,S))$, next Prop. \ref{prop_condition_aut_for_permutation} and the fact that $\varphi(x_i)=x_j$  if and only  if $\dot{\sigma}(i)=j$ (as $\dot{\sigma}$ is  a  permutation matrix) and at last the definition of the left lcm. Inductively we obtain $(*)$ $\varphi(x_{i_1}\vee_R..\vee_R x_{i_k} )= \varphi(x_{i_1})\vee_R .. \vee_R\varphi(x_{i_k})$.  From Prop. \ref{prop_simple_lcm},  if $s \in \operatorname{Div}(\Delta)$ then there exist a subset of $X$, $X_r(s)=\{x_{i_1},..,x_{i_k}\}$ such that $s$ is the left lcm of $X_r(s)$. So, from $(*)$,  $\varphi(s) \in \operatorname{Div}(\Delta)$ and has the same length as $s$. In the same way, $t \in \operatorname{Div}(\Delta)$ is the left lcm  of $X_r(t)$.  Clearly, $s \vee_R t$ is the left lcm of $X_r(s)\cup X_r(t)$ and $s\wedge_R t$ is the left gcd of $(X_r(s) \cap X_r(t))\cup\{1\}$. So, using $(*)$, we have    $(ii)$ and  $(iii)$. Furthermore, $\varphi(\Delta)=\Delta$ and $\varphi(1)=1$. \\
   $(iv)$ From Thm. \ref{theo_im_group_auto}, $\varphi(N)\subseteq N$ and using the fact that $\sigma$ is a permutation matrix in the proof, we have $\varphi(\theta_i)=\theta_{\dot{\sigma}(i)}$, where $\theta_i$ is the frozen ending with $x_i$. So, $\varphi(N)=N$. Moreover,   $\varphi$ induces an automorphism  $\hat{\varphi}$ of the finite quotient group $G(X,S)/N$ defined by  $\hat{\varphi}(Na)=N\varphi(a)$.
 \end{proof}
We now  consider the group of  automorphisms of $G(X,S)$ induced by the  generalized permutation matrices in $\Im_\pi$. We  show this group preserves a Garside structure which is not exactly  the original one. Indeed, we define an extended Garside structure that takes into account generators with negative sign. This is done in the following way.
Let $X=\{x_1,..,x_n\}$. We define $X^{-}=\{x_1^{-1},..,x_n^{-1}\}$
 and  $\widehat{X}=X \cup X^{-}$. For $X^{-}$, we define left  complement and left lcm in the following way: If $x_ix_j=x_kx_l$ is a defining relation in $G(X,S)$, we define $x_i^{-1}\vee_R x_k^{-1}=x_j^{-1}x_i^{-1}= x_l^{-1}x_k^{-1}$,  $x_i^{-1}\setminus x_k^{-1}=x_j^{-1}$ and $x_k^{-1}\setminus x_i^{-1}=x_l^{-1}$. In the same way, we  define right complement and lcm. Obviously, complement and lcm are  defined for every pair of elements in $ X^{-}$ and are unique. The element $\Delta^{-1}$ is the left and right lcm of $X^-$, using Prop. \ref{prop_simple_lcm}.
 We now extend the definition of left and right lcm
to  $\widehat{X}$ in the following way. 
 If $x_ix_j=x_kx_l$ is a defining relation in $G(X,S)$, then $x_i^{-1}x_k=x_jx_l^{-1}$ and  $x_k^{-1}x_i=x_lx_j^{-1}$  hold in $G(X,S)$ and we can define the following elements: 
 $x_i^{-1}\vee_L x_j$,  $x_k^{-1}\vee_L x_l$,  $x_l^{-1}\vee_R x_k$ and $x_j^{-1}\vee_R x_i$. Note that we use the same symbols for the complement and lcm in each set of elements. If for every pair of elements in $X$
 and  for every pair of elements in $X^{-}$, left and right  lcm are defined and unique, this is not the case for mixed pairs with an element from $X$ and another one from $X^{-}$.  
\begin{prop}\label{prop_gene.permut_preserve_extended_garside}
 Let  $(X,S)$ be a non-degenerate symmetric  solution, with  structure group  $G(X,S)$. Let $\sigma$ be a generalized  permutation matrix in  $\Im_\pi$ and  let $\varphi \in \Phi_\pi$ be induced  by $\sigma$.  Let  $x_i,x_j \in X$. Then 
 $\varphi$ satisfies:\\
 $(i)$ $\varphi(x_i\vee_R x_j)= \varphi(x_i)\vee_R\varphi(x_j)$, and $\varphi(x_i\setminus x_j)=\varphi(x_i)\setminus \varphi(x_j)$, when defined.\\
 $(ii)$   $\varphi(x_i\vee_L x_j)= \varphi(x_i)\vee_L \varphi(x_j)$ and $\varphi(x_i\tilde{\setminus} x_j)=\varphi(x_i)\tilde{\setminus} \varphi(x_j)$, when defined.\\
 $(iii)$ $\varphi(N)=N$, where $N$ is the free abelian subgroup of rank $n$ of  $G(X,S)$ generated by the frozen elements.\\
   Furthermore,   $\varphi$ induces an automorphism of the finite quotient group  $G(X,S)/N$.
   \end{prop}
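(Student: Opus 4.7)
My plan is to verify the three claims by combining the homomorphism property of $\varphi$ with the explicit description of $\varphi$ on $X$. Since $\sigma$ is a generalized permutation matrix with $\sigma(t_j) = t_{\dot{\sigma}(j)}^{\epsilon_j}$, Lemma \ref{lem_calcul_pi-1}$(i),(ii)$ and Lemma \ref{lem_etingof_inverseT} give $\varphi(x_j) = x_{\dot{\sigma}(j)}$ when $\epsilon_j = 1$ and $\varphi(x_j) = x_{T^{-1}(\dot{\sigma}(j))}^{-1}$ when $\epsilon_j = -1$. Moreover, Lemma \ref{lem_elts_same_orbit_permuatation_column}$(ii)$ tells us that the sign $\epsilon_j$ is constant on each indecomposable component of $(X,S)$, so within a component $\varphi$ lands either entirely in $X$ or entirely in $X^-$.

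For part $(i)$, I apply the homomorphism $\varphi$ to the defining relation $(x_i \setminus x_j)\, x_i = (x_j \setminus x_i)\, x_j$ which represents $x_i \vee_R x_j$, obtaining the equality $\varphi(x_i \setminus x_j)\,\varphi(x_i) = \varphi(x_j \setminus x_i)\,\varphi(x_j)$. It then suffices to identify this common element as $\varphi(x_i) \vee_R \varphi(x_j)$ in the appropriate (standard or extended) left-lcm. When $\epsilon_i = \epsilon_j = 1$, I use Prop. \ref{prop_phi_aut_permut_class}$(i)$, i.e., $\dot{\sigma} f_j \dot{\sigma}^{-1} = f_{\dot{\sigma}(j)}$, exactly as in the proof of Thm. \ref{thm_permut_preserve_garside}. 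When $\epsilon_i = \epsilon_j = -1$, I set $\tau = T^{-1}\circ\dot{\sigma}$ so that $\varphi(x_j) = x_{\tau(j)}^{-1}$, and from Prop. \ref{prop_phi_aut_permut_class}$(iii)$ together with the identity $f_x^{-1}\circ T = T\circ g_x$ of Lemma \ref{lem_etingof_inverseT}, I derive $\tau\circ f_j^{-1} = g_{\tau(j)}^{-1}\circ \tau$. This is precisely the identity required to match the definition of $\vee_R$ on $X^-$, since the relevant defining relation $x_{\tau(i)} x_{g_{\tau(i)}^{-1}(\tau(j))} = x_{\tau(j)} x_{g_{\tau(j)}^{-1}(\tau(i))}$ produces $x_{\tau(i)}^{-1}\vee_R x_{\tau(j)}^{-1}$. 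The complement statement follows by right-cancellation in the equality of products. In the mixed sign case, $x_i$ and $x_j$ lie in distinct indecomposable components; I apply the homomorphism to the rewriting $x_{f_i^{-1}(j)}^{-1}\,x_{f_j^{-1}(i)} = x_i\,x_j^{-1}$ of the defining relation and match the resulting expression with the prescribed definition of the $X \cup X^{-}$-lcm recalled just before the proposition.

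Part $(ii)$ is handled in a fully dual manner: I apply $\varphi$ to the relation $x_i\,(x_i\tilde{\setminus} x_j) = x_j\,(x_j\tilde{\setminus} x_i)$ representing $x_i \vee_L x_j$. The commutation rule for $\dot{\sigma}$ and $g_j$ in the positive sign case is obtained by evaluating $\varphi$ on a defining relation $x_i x_j = x_{g_i(j)} x_{f_j(i)}$ and invoking uniqueness of defining relations of the Garside monoid, yielding $\dot{\sigma}\circ g_i = g_{\dot{\sigma}(i)}\circ\dot{\sigma}$. The negative-sign version is obtained analogously, with the roles of left and right second components exchanged because inverting a relation $x_i x_j = x_k x_l$ swaps them. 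For part $(iii)$, Thm. \ref{theo_im_group_auto}$(iii)$ gives $\varphi(N) \subseteq N$, and specializing that proof to $\sigma(t_i) = t_{\dot{\sigma}(i)}^{\epsilon_i}$ shows $\varphi(\theta_i) = \theta_{\dot{\sigma}(i)}^{\epsilon_i}$, so the images form a free generating set of $N$ and $\varphi(N) = N$. The quotient automorphism $\hat{\varphi}(aN) = \varphi(a)N$ is then well defined, with inverse induced by $\varphi^{-1}$.

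The main technical obstacle will be the mixed-sign case in $(i)$ and $(ii)$: there the left/right lcms are defined only in the particular configurations listed in the definition of the extended structure, and one must verify that the image under $\varphi$ of the defining relation for $x_i \vee_R x_j$ really falls into one of those admissible configurations. This requires matching the relation $x_i x_j = x_k x_l$, after applying $\varphi$ and rewriting the $X^-$ factors on the appropriate side, against the explicit prescriptions for $x_p^{-1} \vee_L x_q$ and $x_q^{-1} \vee_R x_p$. No new machinery is needed beyond Prop. \ref{prop_phi_aut_permut_class}, Lemma \ref{lem_elts_same_orbit_permuatation_column}, Lemma \ref{lem_etingof_inverseT}, and the homomorphism property of $\varphi$, but careful bookkeeping of indices and signs is essential.
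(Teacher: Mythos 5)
Your proposal is correct and follows essentially the same route as the paper: apply the automorphism $\varphi$ to the defining relation realizing the lcm, use Lemma \ref{lem_elts_same_orbit_permuatation_column}$(ii)$ to control the signs of the images in $\widehat{X}$, match the resulting length-two relation against the extended definitions of $\vee_R$, $\vee_L$ and the complements, and for $(iii)$ combine Theorem \ref{theo_im_group_auto}$(iii)$ with $\varphi(\theta_i)=\theta_{\dot{\sigma}(i)}^{\epsilon_i}$. Your explicit case analysis via Prop.~\ref{prop_phi_aut_permut_class} and the identity $\tau\circ f_j^{-1}=g_{\tau(j)}^{-1}\circ\tau$ simply spells out what the paper compresses into the observation that the image equation is necessarily derived from a defining relation.
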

 \begin{proof}
$(i)$, $(ii)$ Assume $x_ix_j=x_kx_l$ is a defining relation in $G(X,S)$, with $x_i,x_j,x_k,x_l \in X$,  so $x_i \vee_L x_k= x_ix_j=x_kx_l$. The elements  $x_i$ and $x_l$ belong to the same irreducible component and the same holds for  $x_j$ and $x_k$. We have $\varphi(x_i \vee_L x_k)=\varphi(x_i)\varphi(x_j)=\varphi(x_k)\varphi(x_l)$, since  $\varphi$ belongs to $\operatorname{Aut}(G(X,S))$.  Moreover,  as $\sigma$ is a generalized permutation matrix,  $\varphi(x_i),\varphi(x_j),\varphi(x_k),\varphi(x_l) \in \widehat{X}$, with $\varphi(x_i)$, $\varphi(x_l)$ sharing  the same sign and $\varphi(x_j)$, $\varphi(x_k)$ sharing  the same sign, from Lemma \ref{lem_elts_same_orbit_permuatation_column}$(ii)$.  So, the equation  $\varphi(x_i)\varphi(x_j)=\varphi(x_k)\varphi(x_l)$ is necessarily derived
  from a defining relation in $G(X,S)$ and   from our  definition of $\vee$, $\setminus$, $\vee_R$,  $\setminus_L$ in $\widehat{X}$, we have $(i)$, $(ii)$, $(iii)$, $(iv)$.\\
$(v)$ From Thm. \ref{theo_im_group_auto}, $\varphi(N)\subseteq N$.  From the proof of Thm. \ref{theo_im_group_auto} and Lemma \ref{lem_frozen_act_trivially}$(v)$, as $\sigma$ is a generalized permutation matrix,  we have $\varphi(\theta_i)=\pi^{-1}(t_{\dot{\sigma}(i)}^{-m})=\theta^{-1}_{\dot{\sigma}(i)}$, where $\theta_i$ is the frozen ending with $x_i$. So, $\varphi(N)=N$ and  $\varphi$ induces an automorphism of the finite quotient group  $G(X,S)/N$.
 \end{proof}
  \section{Special case of indecomposable solutions with $\Phi_{\pi} \leq \operatorname{Out}(G(X,S))$}\label{Sec_indecomposable}
  In this section, we consider the special case of  indecomposable solutions. In particular, for indecomposable solutions with $n=2,3$  we show $\Phi_{\pi} =\operatorname{Out}(G(X,S))$ for $\mid X \mid=2$ and 
                         $\Phi_{\pi} \leq \operatorname{Out}(G(X,S))$ for $\mid X \mid=3$.     
          For $\mid X \mid=3$,  we do not know whether the inclusion $\Phi_{\pi} \leq \operatorname{Out}(G(X,S))$ is an equality. Another question that arises naturally is whether it is possible to prove $\Phi_{\pi} \leq \operatorname{Out}(G(X,S))$ for general indecomposable solutions or $\Phi_{\pi} \cap \operatorname{Out}(G(X,S))$ is not trivial.  
   \subsection{Properties of $\Im_\pi$  for general indecomposable solutions}
   We show that some properties are specific to  indecomposable  solutions.
  \begin{prop}\label{prop_indec_PROPERTIES}
  Let  $(X,S)$ be an indecomposable non-degenerate symmetric  solution. Let $G(X,S)$ be its structure group. Let $\sigma \in \Im_\pi$ and $\varphi \in \Phi_{\pi}$ defined by $\sigma$.
  Let  $w_j$ be the element in $\mathbb{Z}^{n}$ representing the $j$-th  column $(w_j)$ of $\sigma$. Assume $w_1=t_1^{m_1}\,t_2^{m_2}..t_n^{m_n}$, with $m_1,..,m_n \in \mathbb{Z}$. Then 
   \\
  $(i)$ Each column of $\sigma$ is a permutation of the rows of the first column.\\
  $(ii)$ $m_1+m_2+..+m_n= \pm 1$\\
  $(iii)$ If additionally,   the sum of each row is $m_1+m_2+..+m_n$, then  $\varphi(\Delta)= \Delta^{\pm 1}$,  for all $\varphi \in \Phi_\pi$.\\
   \end{prop}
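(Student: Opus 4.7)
The plan is to dispatch the three parts in sequence, exploiting indecomposability together with the group structure of $\Im_\pi$.

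Part $(i)$ will be an immediate application of Lemma \ref{lem_elts_same_orbit_permuatation_column}$(i)$: by indecomposability every pair $x_j,x_k$ lies in the same (and only) indecomposable component, so the lemma gives that each column $(w_j)$ is a permutation of the rows of $(w_1)$. For $(ii)$, part $(i)$ forces every column sum of $\sigma$ to equal $s := m_1 + \cdots + m_n$; writing $v = (1,\dots,1)$, this reads $v\sigma = sv$. Since $\Im_\pi$ is a subgroup of $GL_n(\mathbb{Z})$ by Thm. \ref{theo_im_group_auto}, $\sigma^{-1}$ also lies in $\Im_\pi$, and applying the same column-sum argument to $\sigma^{-1}$ produces an integer $t$ with $v\sigma^{-1} = tv$. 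Then $v = v\sigma\sigma^{-1} = st\,v$ forces $st = 1$, whence $s = \pm 1$.

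For $(iii)$, the additional row-sum hypothesis says, in multiplicative notation, that $\sigma(t_1 t_2 \cdots t_n) = (t_1 t_2 \cdots t_n)^{s}$, and iterating, $\sigma\bigl((t_1\cdots t_n)^{k}\bigr) = (t_1\cdots t_n)^{sk}$ for every $k \in \mathbb{Z}$. I would combine this with the identity $\pi(\Delta^{e}) = (t_1 t_2 \cdots t_n)^{e}$, where $e$ is the exponent provided by Picantin's theorem ($Z(G) = \langle \Delta^{e}\rangle$). To justify this identity, I would note that $\Delta^{e}$ is central, so the cocycle relation $\pi(a\Delta^{e}) = \pi(\Delta^{e}a)$ combined with the trivial action of $\Delta^{\pm e}$ gives $a\bullet \pi(\Delta^{e}) = \pi(\Delta^{e})$ for every $a\in G(X,S)$; by indecomposability the $G(X,S)$-orbit on the basis $\{t_1,\dots,t_n\}$ is everything, so the fixed sublattice of $\mathbb{Z}^{n}$ is $\mathbb{Z}\cdot (t_1\cdots t_n)$, and a total-degree count ($en$ on each side) pins the coefficient to be $e$. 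Putting the two ingredients together, $\sigma(\pi(\Delta^{e})) = (t_1\cdots t_n)^{se} = \pi(\Delta^{se})$, so $\varphi(\Delta^{e}) = \Delta^{se}$ and hence $\varphi(\Delta)^{e} = \Delta^{\pm e}$.

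The main obstacle is the final passage from $\varphi(\Delta)^{e} = \Delta^{\pm e}$ to $\varphi(\Delta) = \Delta^{\pm 1}$, which amounts to a rigidity statement for $e$-th roots of $\Delta^{\pm e}$. In the case $e = 1$ — covering all the explicit indecomposable examples treated in the paper, such as $G_{2}$, $G_{3}$ and Ex. \ref{exemple:exesolu_et_gars}, where $Z(G) = \langle \Delta\rangle$ — $\Delta$ is itself central, so $\varphi(\Delta) \in \varphi(Z(G)) = Z(G) = \langle \Delta\rangle$, and $\varphi$ being an automorphism of an infinite cyclic group forces $\varphi(\Delta) = \Delta^{\pm 1}$ immediately. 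For the general case $e > 1$, I would argue that $\varphi(\Delta)$ is primitive in the sense of Defn. \ref{defn_periodic} (being the image of the primitive element $\Delta$ under an automorphism), so it is a primitive $e$-th root of $\Delta^{\pm e}$, and then invoke the classification in Thm. \ref{thm_algo_periodic}$(ii)$ together with the $\Delta$-pure structure to pin $\varphi(\Delta)$ down to $\Delta^{\pm 1}$; this last rigidity step is the delicate part of the proof.
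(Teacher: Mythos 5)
Your parts $(i)$ and $(ii)$ are fine. Part $(i)$ is exactly the paper's argument (Lemma \ref{lem_elts_same_orbit_permuatation_column}$(i)$ plus indecomposability). For $(ii)$ you argue differently but correctly: the paper observes that the common column sum $s=m_1+\cdots+m_n$ is an integer eigenvalue of $\sigma\in GL_n(\mathbb{Z})$ (eigenvector $(1,\dots,1)$ for $\sigma^t$) and hence divides $\det(\sigma)=\pm1$, whereas you use that $\Im_\pi$ is a group (Thm. \ref{theo_im_group_auto}) to get an integer $t$ with $v\sigma^{-1}=tv$ and conclude $st=1$. Both are elementary and valid.

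Part $(iii)$, however, has a genuine gap, and the gap sits precisely where you locate it. Your route goes through $\pi(\Delta^{e})=(t_1\cdots t_n)^{e}$ and yields only $\varphi(\Delta)^{e}=\Delta^{\pm e}$; the promotion to $\varphi(\Delta)=\Delta^{\pm1}$ for $e>1$ is asserted as a ``rigidity'' of $e$-th roots of $\Delta^{\pm e}$ that you do not prove, and Thm. \ref{thm_algo_periodic}$(ii)$ gives only an algorithmic classification up to conjugacy and inversion, not uniqueness of the root, so it does not by itself pin $\varphi(\Delta)$ down. In addition, your justification of $\pi(\Delta^{e})=(t_1\cdots t_n)^{e}$ is shaky: centrality of $\Delta^{e}$ gives $((\Delta^{e})^{-1}\bullet\pi(a))\,\pi(\Delta^{e})=(a^{-1}\bullet\pi(\Delta^{e}))\,\pi(a)$, which does not yield $a\bullet\pi(\Delta^{e})=\pi(\Delta^{e})$ unless you already know $\Delta^{e}$ acts trivially. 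The paper avoids the detour entirely: since $\Delta$ is the left and right lcm of $X$ and has length $n$ (Prop. \ref{prop_simple_lcm}), one shows directly that $\pi(\Delta)=t_1t_2\cdots t_n$ (every $t_i$ occurs and the total degree is $n$), and likewise $\pi(\Delta^{-1})=t_1^{-1}\cdots t_n^{-1}$ using $\pi(x_i^{-1})=t_{T(i)}^{-1}$ (Lemma \ref{lem_calcul_pi-1}$(i)$) and bijectivity of $T$. The row-sum hypothesis gives $\sigma(t_1\cdots t_n)=(t_1\cdots t_n)^{s}$ with $s=\pm1$, so $\pi(\varphi(\Delta))=\sigma(\pi(\Delta))=\pi(\Delta^{\pm1})$, and bijectivity of $\pi$ gives $\varphi(\Delta)=\Delta^{\pm1}$ with no root-extraction step at all. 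Replacing your $\Delta^{e}$ argument by this direct computation closes the gap; as written, your proof of $(iii)$ is complete only in the case $e=1$.
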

 \begin{proof}
  $(i)$  From Lemma \ref{lem_elts_same_orbit_permuatation_column}$(i)$, if $x_j$ and $x_k$ belong to the same indecomposable component of $(X,S)$, then $(w_j)$ is a permutation of the rows of   $(w_k)$. The solution $(X,S)$ is  indecomposable, so each $(w_j)$ is obtained by permuting the rows of   $(w_1)$ according to some  permutation (that depends on $j$). \\
   $(ii)$ From $(i)$, in each column of $\sigma$ the same numbers $m_1,..,m_n$ appear (in a different order). The sum of each column   is  then $m_1+m_2+..+m_n$, so the number   $m_1+m_2+..+m_n$ is  an  integer eigenvalue of the matrix $\sigma$, where $\operatorname{det}(\sigma)=\pm 1$.\\ 
 $(iii)$  From $(ii)$,  $m_1+m_2+..+m_n= \pm 1$. Assume first $m_1+m_2+..+m_n=1$. So, $\sigma(t_1t_2..t_n)=\sigma(t_1)\sigma(t_2)..\sigma(t_n)=w_1w_2..w_n=t_1^{m_1+m_2+..+m_n}..t_n^{m_1+m_2+..+m_n}= t_1t_2..t_n$, using $(i)$ and  the assumption. That means $t_1t_2..t_n$ is a fixed element of $\sigma$. It holds that $\pi(\Delta)=t_1t_2..t_n$. Indeed, from Prop.\ref{prop_simple_lcm}$(i)$,  $\Delta$ is the right and left lcm of $X$ and it has length $n$, so for all $1 \leq i \leq n$, $t_i$ appears in $\pi(\Delta)$ and  $\pi(\Delta)$ has length $n$ also. So, $\pi(\Delta)=t_1t_2..t_n$. We have then $\sigma(t_1t_2..t_n)=\sigma \circ \pi (\Delta)=\pi (\varphi (\Delta))=t_1t_2..t_n$, from the commutativity of the diagram. Since $\pi$ is bijective and 
 $\pi (\Delta)=\pi (\varphi (\Delta))=t_1t_2..t_n$, we have  $\varphi(\Delta)= \Delta$.  Assume next $m_1+m_2+..+m_n=-1$. So, as before,  $\sigma(t_1t_2..t_n)= t_1^{-1}t_2^{-1}..t_n^{-1}$. It holds that $\pi(\Delta^{-1})=t_1^{-1}t_2^{-1}..t_n^{-1}$. As before,  for all $1 \leq i \leq n$, $\pi(x_i^{-1})$ appears in $\pi(\Delta^{-1})$. From lemma \ref{lem_calcul_pi-1},  $\pi(x_i^{-1})=t_{T(i)}^{-1}$ and $T$ is bijective, so for all $1 \leq i \leq n$, $t_i^{-1}$ appears in $\pi(\Delta^{-1})$. As    $\pi(\Delta^{-1})$ has length $n$,  $\pi(\Delta^{-1})=t_1^{-1}t_2^{-1}..t_n^{-1}$. Using the same argument as before, $\varphi (\Delta)=\Delta^{-1}$.\\
  \end{proof}
  It seems the assumption added in Prop. \ref{prop_indec_PROPERTIES} $(iii)$  holds for all indecomposable solutions.  As the following example illustrates,  if the solution is decomposable, $\varphi(\Delta)= \Delta^{\pm 1}$ does not necessarily hold.
  \begin{ex}
 Consider the structure group  $G(X,S)= \mathbb{Z}\ltimes G_2$ from Ex. \ref{exemples_aut_group} and take the following $\sigma \in \Im_\pi$:  $\left( \begin{array}{ccc}
                                       1 & 2 & 2  \\
                                       1 & 3 & 2 \\
                                       1 & 2 & 3
                               \end{array} \right)$.     
     Then the induced automorphism $\varphi$ satisfies $\varphi(x_1)=x_1x_2x_2$, $\varphi(x_2)=x_2(x_3x_2)(x_2x_3)(x_1x_1)=x_2 \theta_1 \theta_2 \theta_3$, $\varphi(x_3)=x_3(x_2x_3)(x_3x_2)(x_1x_1)=x_3 \theta_1 \theta_2 \theta_3$, and $\varphi(x_1x_2^2)=x_2^2\Delta^5$, where $\Delta=x_1x_2^2$.   The image $x_2^2\Delta^5$ of the central element  $\Delta$ is also a  central element, as $Z(G)= \langle x_1\rangle \times \langle x_2^2 \rangle$. In this case, we have $\varphi(N)=N$, as   $\varphi(\theta_1)= \theta_1 \theta_2 \theta_3$, $\varphi(\theta_2)= \theta_1^2 \theta_2^2 \theta_3^3$ and $\varphi(\theta_3)= \theta_1^2 \theta_2^3 \theta_3^2$.                   
  \end{ex}    
   \subsection{For indecomposable solutions with $n=2,3$, we show $\Phi_{\pi} \leq \operatorname{Out}(G(X,S))$}
   We consider the special case of  indecomposable solutions with $n=2,3$ and  first we show  that  $\Im_\pi$  is a group of generalized permutation matrices. This means that for these solutions, the extended  Garside structure is preserved by $\Phi_{\pi}$ (from Prop. \ref{prop_gene.permut_preserve_extended_garside}) and the Garside structure is preserved by its subgroup of permutation matrices (from Thm. \ref{thm_permut_preserve_garside}).
   \begin{prop}\label{prop_indec_generalized_permut}
    Let  $(X,S)$ be an indecomposable non-degenerate symmetric  solution, with $\mid X \mid =n=2,3$. Let $G(X,S)$ be its structure group. Then  $\Im_\pi$  is a group of generalized permutation matrices in  $GL_n (\mathbb{Z})$. 
   \end{prop}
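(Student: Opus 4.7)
The plan is to combine the structural constraints already established for $\Im_\pi$ over an indecomposable solution with the very restrictive arithmetic that comes from $\det \sigma = \pm 1$. Since $n \in \{2,3\}$ is prime, the unique indecomposable solution of size $n$ is the permutation solution, so $f_1 = \cdots = f_n = f$ for a fixed $n$-cycle $f$. Fix $\sigma \in \Im_\pi$, write its first column multiplicatively as $w_1 = t_1^{m_1}\cdots t_n^{m_n}$, and let $\alpha_j$ denote the permutation of $\{1,\dots,n\}$ corresponding to the left action of $(\pi^{-1}(w_j))^{-1}$, so that equation~(\ref{formule_action}) reads $\alpha_j \cdot w_i = w_{f(i)}$. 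By Proposition~\ref{prop_indec_PROPERTIES}(i),(ii) every column of $\sigma$ is a row-permutation of $w_1$ and $m_1 + \cdots + m_n = \pm 1$; by Lemma~\ref{lem_fjAj_same_structure}(i),(ii), each $\alpha_j$ has the same parity and fixed-point count as $f$, hence is itself an $n$-cycle.

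Next I would argue that the $\alpha_j$'s are all equal. The right-hand side $w_{f(i)}$ of $\alpha_j \cdot w_i = w_{f(i)}$ does not depend on $j$, so for any two indices $j,k$ one has $\alpha_j \cdot w_1 = \alpha_k \cdot w_1$. For $n = 2,3$, any two distinct $n$-cycles of $S_n$ disagree at every point, so the equality forces $m_\ell = 0$ for every $\ell$, contradicting $\sum m_\ell = \pm 1$. Hence there is a single $n$-cycle $\alpha$ with $\alpha_j = \alpha$ for all $j$, and then $w_{f(i)} = \alpha \cdot w_i$ determines the columns $w_2,\dots,w_n$ recursively from $w_1$; concretely, $\sigma$ becomes a circulant matrix in the entries $m_1,\dots,m_n$ (two possible cyclic patterns, according as $\alpha = f$ or $\alpha = f^{-1}$, both treated identically).

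With $\sigma$ in circulant form, I would finish by solving the equation $\det \sigma = \pm 1$. For $n = 2$ this is $\det\begin{pmatrix} m_1 & m_2 \\ m_2 & m_1 \end{pmatrix} = (m_1 - m_2)(m_1 + m_2) = \pm 1$, and combined with $m_1 + m_2 = \pm 1$ it forces $\{m_1, m_2\} = \{0, \pm 1\}$. For $n = 3$ the standard circulant identity gives
\[
\det \sigma = m_1^3 + m_2^3 + m_3^3 - 3 m_1 m_2 m_3 = (m_1 + m_2 + m_3)\,Q(m_1,m_2,m_3),
\]
where $Q = \tfrac{1}{2}\bigl((m_1-m_2)^2 + (m_2-m_3)^2 + (m_1-m_3)^2\bigr) \ge 0$. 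Since the first factor is $\pm 1$ by Proposition~\ref{prop_indec_PROPERTIES}(ii), we need $Q = 1$, so two of the pairwise differences are $\pm 1$ and the third is $0$. Combined with $m_1 + m_2 + m_3 = \pm 1$, the only integer solutions are the six vectors that have one entry $\pm 1$ and two zeros. In every case $w_1$ is a signed standard basis element, and since all other columns arise from $w_1$ by a row-permutation, $\sigma$ is a generalized permutation matrix.

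The main obstacle is the step establishing that all $\alpha_j$'s coincide; this is precisely what uses $n \le 3$ (through the ``distinct $n$-cycles share no fixed point'' observation) and is presumably why the proposition is restricted to these values of $n$. Everything else is a clean cascade from Proposition~\ref{prop_indec_PROPERTIES} and an elementary Diophantine check.
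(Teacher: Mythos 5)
Your argument is correct and follows essentially the same route as the paper: Proposition \ref{prop_indec_PROPERTIES} reduces $\sigma$ to one of the two circulant patterns with column sum $\pm 1$, and the condition $\det\sigma=\pm1$ is then an elementary Diophantine check --- you merely make explicit what the paper compresses into ``case-by-case checking'' and ``very basic tools from number theory''. One small slip: the equality $\alpha_j\cdot w_1=\alpha_k\cdot w_1$ for two distinct $n$-cycles forces all the exponents $m_\ell$ to be \emph{equal} (not zero), but this still contradicts $m_1+\cdots+m_n=\pm1$ because $n\,m_1=\pm1$ has no integer solution, so your conclusion stands.
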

   \begin{proof}The proof is a case-by-case checking, using Prop. \ref{prop_indec_PROPERTIES}$(i)$.
   For $n=2$, the unique indecomposable solution has structure group $G_2=\langle x_1,x_2 \mid x_1^2=x_2^2 \rangle$. The set of matrices satisfying Eqn.\ref{equation_matrice} has the following form:
   \[ \{
  \left( \begin{array}{cc}
                                  a & b  \\
                                  b & a 
             \end{array} \right) \mid a,b \in \mathbb{Z},a+b=\pm1
              \}
             \]
   In order to belong to   $GL_n (\mathbb{Z})$, a matrix in this set satisfies $a^2-b^2=\pm 1$, that is $a=0$ and $b=\pm 1$
or $b=0$ and $a=\pm 1$. This gives the set $\Im_\pi$ of generalized permutation matrices described in Ex.\ref{exemples_aut_group}.\\
For $n=3$, the unique indecomposable solution has structure group $G_3=\langle x_1,x_2,x_3 \mid x_1x_3=x_2^2, x_2x_1=x_3^2,x_3x_2=x_1^2\rangle$. The set of matrices satisfying Eqn.\ref{equation_matrice} is a subset of one of the two following sets of matrices:
   \[ \{
  \left( \begin{array}{ccc}
                                  a & b & c \\
                                  b & c & a \\
                                  c & a & b
             \end{array} \right) \mid a,b,c \in \mathbb{Z}, a+b+c=\pm1 \};\;\;
      \{
        \left( \begin{array}{ccc}
                                           a & b & c \\
                                           c & a & b \\
                                           b & c & a
             \end{array} \right) \mid a,b,c \in \mathbb{Z}, a+b+c=\pm1 \} 
             \]
   In order to belong to   $GL_n (\mathbb{Z})$, a matrix in one of these two sets satisfies $(a+b+c)(a^2-a(b+c)+(b^2+c^2-bc)=\pm 1$. Using very basic tools from number theory, this gives the set $\Im_\pi$ of generalized permutation matrices described in Ex.\ref{exemples_aut_group}. 
   \end{proof}
   For $n=4$, there are five  indecomposable solutions, three of them are of class 4 and the two other of class 2. One of the two indecomposable solutions of class 2 is described in Ex. \ref{exemple:exesolu_et_gars} and the corresponding group $\Im_\pi$ is also  a group of generalized permutation matrices in  $GL_n (\mathbb{Z})$ as described in  Ex.\ref{exemples_aut_group}. We conjecture this is the case for all the five indecomposable solutions. More generally, it would be interesting to understand  the group $\Im_\pi$ for indecomposable solutions.
       
    \begin{thm}\label{thm_indec_outer_aut}
    Let  $(X,S)$ be an indecomposable non-degenerate symmetric  solution, with $\mid X \mid =n=2,3$.  Let $G(X,S)$ be its structure group.  Let $\Phi_{\pi}$ be the group of automorphisms of $G(X,S)$ induced by   the group of generalized permutation matrices $\Im_\pi$ from Prop. \ref{prop_indec_generalized_permut}. Then, for $n=2$,
      $\Phi_{\pi} =\operatorname{Out}(G(X,S))$ and for $n=3$,
            $\Phi_{\pi} \leq \operatorname{Out}(G(X,S))$.
     \end{thm}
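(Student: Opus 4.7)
The entire proof reduces to showing $\Phi_{\pi}\cap\operatorname{Inn}(G(X,S))=\{Id\}$: this is exactly the statement that the quotient map $\operatorname{Aut}(G(X,S))\to\operatorname{Out}(G(X,S))$ is injective on $\Phi_{\pi}$, yielding the inclusion $\Phi_{\pi}\leq\operatorname{Out}(G(X,S))$ in both cases. For $n=2$ the equality will then follow by a cardinality count: $|\Phi_{\pi}|=4$ from the list in Ex.~\ref{exemples_aut_group} and $|\operatorname{Out}(G_2)|=4$ by the Karrass--Pietrowski result recalled in Ex.~\ref{ex_center_G_3}, so the injection becomes an isomorphism.

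To verify triviality of the intersection I split $\Phi_{\pi}\setminus\{Id\}$ according to the action on the Garside element $\Delta$. By Prop.~\ref{prop_indec_generalized_permut} together with the explicit lists in Ex.~\ref{exemples_aut_group}, every $\sigma\in\Im_{\pi}$ is a generalized permutation matrix whose non-zero entries all share a common sign; hence every row sum equals every column sum and the hypothesis of Prop.~\ref{prop_indec_PROPERTIES}(iii) is met. It follows that $\varphi(\Delta)=\Delta^{\pm 1}$ for every $\varphi\in\Phi_{\pi}$, with $\varphi(\Delta)=\Delta$ precisely when $\sigma$ is an ordinary (unsigned) permutation matrix. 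When $\varphi(\Delta)=\Delta^{-1}$---the case of $\sigma_2,\sigma_4$ for $n=2$ and of $\sigma_4,\sigma_5,\sigma_6$ for $n=3$---innerness is ruled out immediately: any inner automorphism fixes the central element $\Delta$, yet $\Delta$ has infinite order (by $\Delta$-purity, $Z(G(X,S))=\langle\Delta\rangle$), so $\Delta^{-1}\neq\Delta$.

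The remaining non-trivial cases are the $\varphi$ induced by actual permutation matrices: the swap $\varphi_3$ for $n=2$ and the two $3$-cycles $\varphi_2,\varphi_3$ for $n=3$. In every such case $\varphi$ acts on $X$ as a non-trivial permutation $\dot{\sigma}$, and I will dispose of these by passing to the abelianization. A direct calculation from the defining relations yields $G_2^{\mathrm{ab}}\cong\mathbb{Z}\oplus\mathbb{Z}/2$ and $G_3^{\mathrm{ab}}\cong\mathbb{Z}\oplus\mathbb{Z}/3$, and in both cases the classes of the generators are pairwise distinct. Concretely, for $G_3$, in coordinates where the free summand is generated by the class $v$ of $x_3$ and the $\mathbb{Z}/3$-summand by $u=x_1-x_2$, the relations force $x_1\mapsto 2u+v$, $x_2\mapsto u+v$, $x_3\mapsto v$; similarly for $G_2$, writing $v=x_2$ and $u=x_1-x_2$, one gets $x_1\mapsto u+v$, $x_2\mapsto v$. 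Since any inner automorphism acts as the identity on $G(X,S)^{\mathrm{ab}}$ and $\dot{\sigma}$ moves some $x_i$ to a different generator whose image in $G(X,S)^{\mathrm{ab}}$ differs by a non-zero torsion element, $\varphi$ cannot be inner.

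The main technical step is this last one, and it is exactly the place where the hypothesis $n\in\{2,3\}$ enters: one needs the abelianization to separate the generators, which is immediate from the small arithmetic of the defining relations in $G_2$ and $G_3$ but is not obviously true for larger indecomposable solutions. Putting the two batches together yields $\Phi_{\pi}\cap\operatorname{Inn}(G(X,S))=\{Id\}$, which together with the cardinality count for $n=2$ proves both parts of the theorem.
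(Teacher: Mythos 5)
Your proof is correct, but it takes a genuinely different route from the paper at the decisive step. Both arguments reduce the theorem to showing $\Phi_{\pi}\cap\operatorname{Inn}(G(X,S))=\{Id\}$ and both invoke the Karrass--Pietrowski--Solitar description of $\operatorname{Out}(G_2)$ to upgrade the inclusion to an equality when $n=2$ (the paper records that fact in Ex.~\ref{exemples_aut_group}, not in Ex.~\ref{ex_center_G_3} -- a harmless slip in your citation). The difference is in how triviality of the intersection is proved for $n=3$: the paper argues that if $\varphi\in\Phi_{\pi}\cap\operatorname{Inn}(G_3)$ is conjugation by $a$, then finiteness of the order of $\varphi$ together with $Z(G_3)=\langle\Delta\rangle$ forces $a$ to be a periodic element, and it then runs the root-extraction machinery of Theorem~\ref{thm_algo_periodic} (Lee--Lee, Sibert) to list the roots of $\Delta$, namely $x_1,x_2,x_3,x_1^2,x_2^2,x_3^2,\Delta$, and observes that conjugation by any of these fixes a generator while no nontrivial element of $\Phi_{\pi}$ does. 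You avoid this Garside-theoretic input entirely: the sign-reversing automorphisms are excluded because they send the infinite-order central element $\Delta$ to $\Delta^{-1}$ while inner automorphisms fix the centre (your verification of the hypothesis of Prop.~\ref{prop_indec_PROPERTIES}(iii) via the common-sign statement of Lemma~\ref{lem_elts_same_orbit_permuatation_column}(ii) is fine, and for these explicit matrices one can also just compute $\varphi(\Delta)$ directly), and the genuine permutation automorphisms are excluded because they act nontrivially on $G^{\mathrm{ab}}$ ($\mathbb{Z}\oplus\mathbb{Z}/2$ for $G_2$, $\mathbb{Z}\oplus\mathbb{Z}/3$ for $G_3$, with the generators having pairwise distinct classes), whereas inner automorphisms act trivially there; your explicit coordinates check out. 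What each approach buys: yours is more elementary and self-contained (no periodic-element algorithm, and it also handles $n=2$ by the same intersection argument rather than by pure identification with the known $\operatorname{Out}(G_2)$), while the paper's root-of-$\Delta$ method does not depend on the abelianization separating the generators, which, as you note, is exactly the feature of the small cases your argument needs and which may fail for larger indecomposable solutions.
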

     \begin{proof}
   For     $n=2$, the group $\Phi_{\pi} \simeq \mathbb{Z}_2 \times \mathbb{Z}_2$, with\[ \Phi_{\pi}= \{
        \varphi_1= Id,
       \varphi_2: \left\{ \begin{array}{ccc}
                                    x_1 & \mapsto & x_2^{-1} \\
                                    x_2 & \mapsto & x_1^{-1} \\
                      \end{array} \right\},
   \varphi_3: \left\{ \begin{array}{ccc}
                                      x_1 & \mapsto & x_2 \\
                                      x_2 & \mapsto & x_1 \\
                        \end{array} \right\}, 
    \varphi_4: \left\{ \begin{array}{ccc}
                                       x_1 & \mapsto & x_1^{-1} \\
                                       x_2 & \mapsto & x_2^{-1} \\
                         \end{array} \right\} \} \]
  Each automorphism $\varphi_i$  in  $\Phi_{\pi}$ corresponds to a matrix $\sigma_i$ in  $\Im_\pi$ (see Ex.\ref{exemples_aut_group}). As $G_2= \mathbb{Z} *_{2\mathbb{Z}} \mathbb{Z}$, using the work of   A. Karrass, A. Pietrowski and D. Solitar on automorphisms of a free product with an amalgamated subgroup \cite{karass_piet},       
         $\Phi_{\pi} =\operatorname{Out}(G_2)$.\\
         For     $n=3$, the group $\Phi_{\pi} \simeq \mathbb{Z}_3 \rtimes \mathbb{Z}_2$, with
         \begin{eqnarray*}
        \Phi_{\pi}= \{
                 \varphi_1= Id,
                \varphi_2: \left\{ \begin{array}{ccc}
                                             x_1 & \mapsto & x_2 \\
                                             x_2 & \mapsto & x_3 \\
                                             x_3 & \mapsto & x_1 
                               \end{array} \right\},
         \varphi_3: \left\{ \begin{array}{ccc}
                                              x_1 & \mapsto & x_3 \\
                                              x_2 & \mapsto & x_1 \\
                                              x_3 & \mapsto & x_2 
                                        \end{array} \right\},   \\                   
           \varphi_4: \left\{ \begin{array}{ccc}
                                                x_1 & \mapsto & x_2^{-1} \\
                                                x_2 & \mapsto & x_1^{-1} \\
                                                x_3 & \mapsto & x_3^{-1} 
                                  \end{array} \right\},
         \varphi_5: \left\{ \begin{array}{ccc}
                                                        x_1 & \mapsto & x_1^{-1} \\
                                                        x_2 & \mapsto & x_3^{-1} \\
                                                        x_3 & \mapsto & x_2^{-1} 
                                          \end{array} \right\},
 \varphi_6: \left\{ \begin{array}{ccc}
                                                x_1 & \mapsto & x_3^{-1} \\
                                                x_2 & \mapsto & x_2^{-1} \\
                                                x_3 & \mapsto & x_1^{-1} 
                                  \end{array} \right\} \} \end{eqnarray*}
          Each automorphism $\varphi_i$  in  $\Phi_{\pi}$ corresponds to a matrix $\sigma_i$ in  $\Im_\pi$ (see Ex.\ref{exemples_aut_group}). We show  $\Phi_{\pi} \cap \operatorname{Inn}(G_3)=\{Id\}$.
                  Assume  that there is  $\varphi \in \Phi_{\pi} \cap \operatorname{Inn}(G_3)$. That is, there exists an element $a \in G$ such that $\varphi(x_i)=ax_ia^{-1}$, for all $1 \leq i \leq n$. Since each $\varphi \in \Phi_{\pi}$ has finite order and $Z(G_3)=\langle \Delta \rangle$ (see Ex. \ref{ex_center_G_3}), we have that $a$ is a periodic element, that is there exists $p \in \mathbb{Z}$ and $q \in \mathbb{Z}_{\geq 1}$ such that $g^q=\Delta^p$ (see Defn. \ref{defn_periodic}). From Theorem \ref{thm_algo_periodic}, there exists a finite-time algorithm that finds all the primitive periodic elements. Applying the algorithm described in \cite[Prop. 5.2]{periodic} (or in \cite[Prop.2.13]{sibert}), we find the following set of roots of $\Delta$: 
                  $\{x_1,x_2,x_3,x_1^2,x_2^2,x_3^2,\Delta\}$. If $a$ is equal to one of these roots of $\Delta$, then $\varphi$ fixes necessarily one of the generators. As none of the automorphisms in  $\Phi_{\pi}$  except $Id$ fixes a generator, we have  $\varphi=Id$.
     \end{proof}
   
 This result holds also for the indecomposable solution with $n=4$ presented in Ex. \ref{exemple:exesolu_et_gars} using a case by case computation.  We conclude this section with the following observation. If we consider the unique indecomposable solution for $\mid X \mid =5$, which is a permutation solution with $f=(1,2,3,4,5)$ and $g=(1,5,4,3,2)$, then  the group of matrices  $\Im_\pi$  is not necessarily a group of  generalized permutation matrices. As an example,  the circulant matrix  with first column $(1,-1,1,0,0)^t$ belongs to $\Im_\pi$.
 The existence of matrices that are not necessarily  generalized permutation matrices in $\Im_\pi$ illustrates the difficulty in understanding the connection between $\Phi_{\pi}$ and $\operatorname{Out}(G(X,S))$ for general solutions.
  \section{Application of our results to the quotient group $G(X,S)/N$} \label{sec_Application_to_quotientgroup}
 \subsection{Definition of a bijective $1$-cocycle $\tilde{\pi}$  from the quotient group $G(X,S)/N$}
 Let $(X,S)$ be a non-degenerate, symmetric solution with structure group $G(X,S)$. Let $\pi$ be the bijective 1-cocycle defined in section  \ref{sec_prelimin1}. Assume 
   $(X,S)$  is of class $m$. Let $N$ be the free abelian group generated by the $n$ frozen elements of length $m$, $\theta_1,..,\theta_n$, and 
    $W$ be the finite quotient group  $G(X,S)/N$.
 We show that the normal subgroup $N$ acts trivially on $\mathbb{Z}^n$ and therefore there exists a bijective 1-cocycle 
 $\tilde{\pi}: W \rightarrow \mathbb{Z}^n/(\pi(N)) $ that is induced by $\pi$.
 \begin{lem}
 Let $w \in \mathbb{Z}^n$. Then $N \bullet w=w$. Moreover, the map $\tilde{\pi}: W \rightarrow \mathbb{Z}^n/(\pi(N)) $ defined by $\tilde{\pi}(aN)=\pi(a)\,(\pi(N)) $ is a bijective 1-cocycle.
   \end{lem}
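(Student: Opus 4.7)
The plan has two halves corresponding to the two assertions in the lemma.

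First I would prove that $N \bullet w = w$ for every $w \in \mathbb{Z}^n$. By hypothesis $(X,S)$ is of class $m$ (this is implicit from the sentence introducing $N$ and $W$), and $N$ is freely generated by the frozen elements $\theta_1,\dots,\theta_n$ of length $m$. By Lemma \ref{lem_frozen_act_trivially}$(iii)$, $\theta_i = \pi^{-1}(t_i^{m})$, so an arbitrary element of $N$ has the form $\theta_{i_1}^{\epsilon_1}\cdots\theta_{i_k}^{\epsilon_k}$ with $\epsilon_j=\pm 1$. Writing this as $\pi^{-1}(t_{i_1}^{\epsilon_1 m}\cdots t_{i_k}^{\epsilon_k m})$ via Lemma \ref{lem_frozen_act_trivially}$(v)$ (to turn inverses into $-m$ exponents) together with Lemma \ref{lem_calcul_pi-1}$(iv)$, Lemma \ref{lem_frozen_act_trivially}$(vi)$ gives directly that the action on any $w\in\mathbb{Z}^n$ is trivial. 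So $N$ lies in the kernel of the $G(X,S)$-action on $\mathbb{Z}^n$, and the action descends to $W=G(X,S)/N$.

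Next I would show that $\tilde{\pi}$ is well defined. If $aN = bN$, write $a = bn$ with $n\in N$. Using the cocycle formula $\pi(bn) = (n^{-1}\bullet \pi(b))\,\pi(n)$ and the fact that $n^{-1}\in N$ acts trivially on $\mathbb{Z}^n$ by the first part, we get $\pi(a)=\pi(b)\,\pi(n)$, so $\pi(a)$ and $\pi(b)$ differ by an element of $\pi(N)$. This shows $\tilde{\pi}$ is well defined on cosets. The same computation, read in reverse, proves injectivity: if $\pi(a)\,\pi(N)=\pi(b)\,\pi(N)$, take $\nu\in \pi(N)$ with $\pi(a)=\pi(b)\nu$; apply $\pi^{-1}$ using Lemma \ref{lem_calcul_pi-1}$(iv)$ and the trivial action of $\pi^{-1}(\nu)\in N$ on $\pi(b)\in\mathbb{Z}^n$ to conclude $a=b\,\pi^{-1}(\nu)\in bN$. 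Surjectivity is immediate: any coset $w\,\pi(N)$ in $\mathbb{Z}^n/\pi(N)$ is the image of $\pi^{-1}(w)\,N$, since $\pi$ is itself bijective.

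Finally I would verify the $1$-cocycle identity for $\tilde{\pi}$. Since $G(X,S)$ permutes the generators $t_1,\dots,t_n$ of $\mathbb{Z}^n$, its action preserves the subgroup $\pi(N)=\langle t_1^m,\dots,t_n^m\rangle$, and combined with the first part of the lemma it factors through a well-defined $W$-action $\bullet$ on $\mathbb{Z}^n/\pi(N)$. For $a_1,a_2\in G(X,S)$, the cocycle identity for $\pi$ reads
\begin{equation*}
\pi(a_1 a_2) \;=\; (a_2^{-1}\bullet \pi(a_1))\,\pi(a_2),
\end{equation*}
and passing to cosets in $\mathbb{Z}^n/\pi(N)$ yields $\tilde{\pi}(a_1 a_2\,N) = ((a_2 N)^{-1}\bullet\tilde{\pi}(a_1 N))\,\tilde{\pi}(a_2 N)$, as required.

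The only genuinely non-trivial ingredient is the triviality of the $N$-action, but that is essentially packaged in Lemma \ref{lem_frozen_act_trivially}$(iv)$–$(vi)$; the remaining steps are formal diagram-chasing through the definitions.
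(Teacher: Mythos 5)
Your proof is correct and follows essentially the same route as the paper: the triviality of the $N$-action is exactly the content of Lemma \ref{lem_frozen_act_trivially}$(iii)$--$(vi)$ (the paper cites $(iii),(iv)$ and notes the rest follows), and the paper leaves the bijectivity and cocycle verification as ``a direct computation'', which you have simply written out in full. No gaps; your explicit checks of well-definedness, injectivity, surjectivity and the descent of the action to $W$ are precisely the intended computation.
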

 \begin{proof}
 From Lemma \ref{lem_frozen_act_trivially} $(iii),(iv)$, $\theta_i \bullet w =w$ for all $1 \leq i \leq n$. So, $N \bullet w=w$. A direct computation shows  $\tilde{\pi}(aN)=\pi(a)\,(\pi(N)) $ is a bijective 1-cocycle.
 \end{proof}
 
 \begin{rem}
Assuming $m=2$, that is the solution $(X,S)$ satisfies condition C, $W$ is a subgroup of  $S_n \ltimes \operatorname{Diag}(n, \mathbb{Z})$ that is a Weyl (or finite Coxeter) group of type  $C_n$ and we give  another equivalent definition of the bijective 1-cocycle 
 $\tilde{\pi}$  induced by $\pi$.
 Indeed, let $\tau: W \rightarrow S_n \ltimes \operatorname{Diag}(n, \mathbb{Z})$ be defined by $\tau(x_iN)= (f_i^{-1},I^{(i)})$, where $\operatorname{Diag}(n, \mathbb{Z})$ is the subgroup of diagonal matrices in $GL_n(\mathbb{Z})$ and $I^{(i)}$ is given by $I^{(i)}_{i,i}=-1$ and $I^{(i)}_{j,j}=1$, for $j \neq i$.  A direct checking shows  $\tau: W \rightarrow S_n \ltimes \operatorname{Diag}(n, \mathbb{Z})$ is a monomorphim. The bijective 1-cocycle  $\tilde{\pi}$  induced by $\pi$ is given by  $\tilde{\pi}:W \rightarrow \operatorname{Diag}(n, \mathbb{Z})$ and  defined by $\tilde{\pi}(aN)=A$, whenever $\tau(aN)=(f,A)$. 
 \end{rem}
 \subsection{A group of automorphisms of the quotient group $G(X,S)/N$}
  Using the bijective 1-cocycle $\tilde{\pi}: W \rightarrow \mathbb{Z}^n/(\pi(N))$ and the construction of Section  \ref{subsec_subg_aut} applied to $W$, we find a subgroup of automorphisms $\Phi_{\tilde{\pi}}$ of $W$.  A question that arises naturally is  whether $\hat{\varphi}:W \rightarrow W$ belongs to  $\Phi_{\tilde{\pi}}$,  whenever  $\hat{\varphi}$ is an automorphism of $W$ induced by  $\varphi \in \Phi_\pi$.
 \begin{prop}
 Let $\varphi \in \Phi_\pi$. Assume  $\varphi(N)=N$. Let $\hat{\varphi}:W \rightarrow W$
be defined  by $\hat{\varphi}(Na)=N\varphi(a)$. Then  $\hat{\varphi}$ belongs to  $\Phi_{\tilde{\pi}}$. 
 \end{prop}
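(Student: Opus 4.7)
The plan is to construct an element $\tilde{\sigma} \in \Im_{\tilde{\pi}}$ such that $\hat{\varphi} = \tilde{\pi}^{-1} \circ \tilde{\sigma} \circ \tilde{\pi}$, by descending the given $\sigma \in \Im_\pi$ (where $\varphi = \pi^{-1} \circ \sigma \circ \pi$) to the quotient $\mathbb{Z}^n/\pi(N)$. First I would verify that $\hat{\varphi}$ is well-defined as an automorphism of $W$: since $\varphi(N) = N$, the composition $G(X,S) \xrightarrow{\varphi} G(X,S) \twoheadrightarrow W$ has $N$ in its kernel and so factors through $W$, giving $\hat{\varphi}(Na) = N\varphi(a)$; its inverse is induced by $\varphi^{-1}$, which also preserves $N$.

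Next I would show $\sigma$ induces a well-defined automorphism $\tilde{\sigma}$ of $\mathbb{Z}^n/\pi(N)$. The key computation is $\sigma(\pi(N)) = \pi(\varphi(N)) = \pi(N)$, which follows directly from the commutativity $\sigma \circ \pi = \pi \circ \varphi$ and the assumption $\varphi(N) = N$. Applied also to $\sigma^{-1}$ and $\varphi^{-1}$, this gives $\sigma(\pi(N)) = \pi(N)$ as sets, so the map $\tilde{\sigma}(w \cdot \pi(N)) := \sigma(w) \cdot \pi(N)$ is a well-defined bijection of the quotient $\mathbb{Z}^n/\pi(N)$.

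With $\hat{\varphi}$ and $\tilde{\sigma}$ in hand, I would check the commutative square $\tilde{\pi} \circ \hat{\varphi} = \tilde{\sigma} \circ \tilde{\pi}$ by a direct computation on a coset $aN$: using the definitions,
\begin{equation*}
\tilde{\pi}(\hat{\varphi}(aN)) = \tilde{\pi}(\varphi(a)N) = \pi(\varphi(a)) \cdot \pi(N) = \sigma(\pi(a)) \cdot \pi(N) = \tilde{\sigma}(\pi(a) \cdot \pi(N)) = \tilde{\sigma}(\tilde{\pi}(aN)),
\end{equation*}
which gives $\hat{\varphi} = \tilde{\pi}^{-1} \circ \tilde{\sigma} \circ \tilde{\pi}$.

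Finally, since $\hat{\varphi}$ is already known to be a homomorphism of $W$, applying the analogue of Proposition \ref{prop_phi_hom'} for the bijective $1$-cocycle $\tilde{\pi}:W \to \mathbb{Z}^n/\pi(N)$ yields $\tilde{\sigma} \in \Im_{\tilde{\pi}}$, and hence $\hat{\varphi} \in \Phi_{\tilde{\pi}}$ by definition. The main obstacle is the verification of step two, namely that $\sigma$ genuinely descends to the quotient; but this is immediate from the commutativity of the original diagram together with the hypothesis $\varphi(N) = N$, so the argument is essentially a formal diagram chase once the construction of $\tilde{\pi}$ in the preceding lemma is in place.
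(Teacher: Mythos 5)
Your argument is correct and follows essentially the same route as the paper: use $\sigma\circ\pi=\pi\circ\varphi$ together with $\varphi(N)=N$ to get $\sigma(\pi(N))=\pi(N)$, so that $\sigma$ descends to an automorphism $\tilde{\sigma}$ of $\mathbb{Z}^n/\pi(N)$, and then check that the induced square with $\tilde{\pi}$ commutes, which exhibits $\hat{\varphi}=\tilde{\pi}^{-1}\circ\tilde{\sigma}\circ\tilde{\pi}$ and hence $\hat{\varphi}\in\Phi_{\tilde{\pi}}$. Your write-up simply makes explicit the well-definedness checks and the appeal to the quotient analogue of Proposition \ref{prop_phi_hom'} that the paper leaves implicit in its diagram chase.
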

 \begin{proof}
 Assume $\varphi(N)=N$.  From the definition of $\varphi$, $\pi \circ \varphi=  \sigma \circ \pi$, so  $ \sigma( \pi(N))=\pi(N)$ and $\tilde{\sigma}:\mathbb{Z}^n/\pi(N) \rightarrow \mathbb{Z}^n/\pi(N)$ is well-defined. Furthermore,  $\tilde{\sigma}$ induces  $\hat{\varphi}$ such that the following diagram is  commutative and $\hat{\varphi}$ belongs to  $\Phi_{\tilde{\pi}}$:

  \begin{tikzpicture}
    \matrix (m) [matrix of math nodes, row sep=3em,
      column sep=3em]{
      &  G& &  \mathbb{Z}^n \\
     W &  &\mathbb{Z}^n/\pi(N) & \\
      & G & & \mathbb{Z}^n \\
      W &  & \mathbb{Z}^n/\pi(N) & \\};
    \path[-stealth]
      (m-1-2) edge [densely dotted] node [right] {$\pi$}(m-1-4) 
      edge[double] 
      (m-2-1)
              edge[densely dotted] node [below] {$\varphi$} (m-3-2) 
      (m-1-4) edge  [densely dotted] node [below] {$\sigma$} (m-3-4)
               edge [double] (m-2-3)
      (m-2-1) edge node [right] {$\tilde{\pi}$}  (m-2-3)
              edge (m-2-3) 
              edge node [right] {$\hat{\varphi}$} (m-4-1)
      (m-3-2) edge [densely dotted] node [right] {$\pi$}  (m-3-4)
              edge  [double] (m-4-1)
      (m-4-1) edge node [right] {$\tilde{\pi}$}(m-4-3)
      (m-3-4) edge [double] (m-4-3)
      (m-2-3) edge node [below] {$\tilde{\sigma}$} (m-4-3)
              edge (m-4-3);
  \end{tikzpicture}
 \end{proof}
 \section*{Appendix: Proofs of Lemmas \ref{lem_calcul_pi-1}, \ref{lem_form_frozen}, \ref{lem_frozen_act_trivially}}
 \begin{lem*}\textbf{\ref{lem_calcul_pi-1}}
 Let $\pi: G(X,S) \rightarrow \mathbb{Z}^{n}$ be the bijective $1$-cocycle defined in Prop. \ref{prop_etingof}$(iv)$. Let $x_i,x_j  \in X$. Let  $t_i$, $t_j$ be   generators of $\mathbb{Z}^{n}$ corresponding to $x_i$ and $x_j$ respectively. Let $u,v \in \mathbb{Z}^{n}$. Then \\
 $(i)$ $\pi(x_i^{-1})=t^{-1}_{f^{-1}_i(i)}$\\
 $(ii)$ $\pi^{-1}(t_j)=x_j$.\\
 $(iii)$  $\pi^{-1}(t_it_j)=x_{f^{-1}_j(i)}x_j$.\\
 $(iv)$ $\pi^{-1}(uv)=\pi^{-1}(\pi^{-1}(v)\bullet u)\,\pi^{-1}(v)$
 \end{lem*}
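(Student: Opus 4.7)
The plan is to derive all four identities directly from the cocycle identity
\[
\pi(a_1a_2)\,=\,(a_2^{-1}\bullet \pi(a_1))\,\pi(a_2)
\]
together with the normalization $\pi(x_j)=t_j$ from Theorem \ref{prop_etingof} and the explicit form of the action $x\bullet -$ induced by $x\mapsto f_x^{-1}$. Part $(ii)$ is immediate from the definition of $\pi$ on generators, so I would state and dismiss it first.

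Next I would prove $(iv)$, which will serve as the engine for $(i)$ and $(iii)$. Given $u,v\in\mathbb{Z}^n$, set $a_2=\pi^{-1}(v)$ and seek $a_1\in G(X,S)$ with $\pi(a_1a_2)=uv$. Substituting into the cocycle identity gives $uv=(a_2^{-1}\bullet\pi(a_1))\,v$, whence $a_2^{-1}\bullet\pi(a_1)=u$, so $\pi(a_1)=a_2\bullet u=\pi^{-1}(v)\bullet u$ and $a_1=\pi^{-1}(\pi^{-1}(v)\bullet u)$. Assembling $\pi^{-1}(uv)=a_1a_2$ yields $(iv)$.

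For $(i)$, I would apply the cocycle identity to the trivial product $x_i\cdot x_i^{-1}=1$: using the convention $\pi(1)=1$, one gets $1=(x_i\bullet\pi(x_i))\,\pi(x_i^{-1})=(x_i\bullet t_i)\,\pi(x_i^{-1})$. Since $x_i$ acts on $\mathbb{Z}^n$ by the permutation $f_i^{-1}$ (Theorem \ref{prop_etingof}$(i),(ii)$), one has $x_i\bullet t_i=t_{f_i^{-1}(i)}$, and solving gives $\pi(x_i^{-1})=t_{f_i^{-1}(i)}^{-1}$. For $(iii)$, one applies $(iv)$ with $u=t_i$, $v=t_j$: then $\pi^{-1}(v)=x_j$ by $(ii)$, and $x_j\bullet t_i=t_{f_j^{-1}(i)}$, so $\pi^{-1}(x_j\bullet t_i)=x_{f_j^{-1}(i)}$, again by $(ii)$. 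Combining gives $\pi^{-1}(t_it_j)=x_{f_j^{-1}(i)}\,x_j$.

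There is no serious obstacle here; the only subtlety to be careful about is consistently tracking which group element acts by $f_x^{-1}$ versus $f_x$, and ensuring that the convention $\pi(1)=1$ is used for the step that produces $(i)$. The argument is a purely formal manipulation of the cocycle rule, so the writeup should essentially be four short paragraphs, one per part, with $(iv)$ presented before $(i)$ and $(iii)$ if one prefers a single unified derivation, or with $(i)$ and $(iii)$ proved from scratch via the cocycle identity and then $(iv)$ proved independently.
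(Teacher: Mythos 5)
Your proposal is correct and follows essentially the same route as the paper: all four parts are formal consequences of the cocycle identity $\pi(a_1a_2)=(a_2^{-1}\bullet\pi(a_1))\,\pi(a_2)$, the normalization $\pi(x_i)=t_i$, $\pi(1)=1$, and the fact that $\bullet$ is an action via $x\mapsto f_x^{-1}$. The only cosmetic difference is that you solve for the preimage in $(iv)$ and then deduce $(iii)$ from it, whereas the paper verifies the stated formulas for $(iii)$ and $(iv)$ directly by applying $\pi$ to them; the computations are identical.
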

 \begin{proof}
 $(i)$ and $(ii)$ result from the definition of $\pi$ and the assumption that $\pi(1)=1$. \\
 $(iii)$
 We have  $\pi(x_{f^{-1}_j(i)}x_j)=t_{f_j(f^{-1}_j(i))}\,t_j=t_i\,t_j$, from the definition of $\pi$ (Prop. \ref{prop_etingof}). \\
  $(iv)$ We have $\pi(\pi^{-1}(\pi^{-1}(v)\bullet u)\,\pi^{-1}(v))=\,((\pi^{-1}(v))^{-1})\bullet (\pi^{-1}(v)\bullet u)\,v=\,uv$, first using the definition of $\pi$ and then the fact that $\bullet$ is an action on $\mathbb{Z}^{n}$.
 \end{proof}

 \begin{lem*}\textbf{\ref{lem_form_frozen}}
 Assume $(X,S)$ is of class $m$. Let $x \in X$.\\
  $(i)$  $S(x,T^{-1}(x))=(x,T^{-1}(x))$ and $S(T(x),x)=(T(x),x)$.\\
  $(ii)$  $T^{m-1}(x)\,T^{m-2}(x)\,...\,T(x)\,x$ is the  frozen of length $m$ ending with $x$.
   \end{lem*}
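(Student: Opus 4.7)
My plan for the proof of Lemma \ref{lem_form_frozen}:

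For part $(i)$, I would compute both components of $S(x, T^{-1}(x))$ and $S(T(x), x)$ directly from the definitions. Recall $S(a,b) = (g_a(b), f_b(a))$ and by Lemma \ref{lem_etingof_inverseT}, $T^{-1}(x) = g_x^{-1}(x)$ and $T(x) = f_x^{-1}(x)$. For the first identity, the first component is $g_x(T^{-1}(x)) = g_x(g_x^{-1}(x)) = x$, which is immediate. For the second component $f_{T^{-1}(x)}(x)$, set $y = T^{-1}(x)$, so $T(y) = x$, i.e., $f_y^{-1}(y) = x$, hence $f_y(x) = y = T^{-1}(x)$. The second identity $S(T(x),x) = (T(x),x)$ is symmetric: the second component is $f_x(T(x)) = f_x(f_x^{-1}(x)) = x$, and for the first component, putting $z = T(x)$ gives $x = T^{-1}(z) = g_z^{-1}(z)$, so $g_z(x) = z = T(x)$.

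For part $(ii)$, I would use $(i)$ as a building block applied along the $T$-orbit of $x$. Since $(X,S)$ is of class $m$, Remark \ref{remark_defn_class} gives $T^m(x) = x$, so $T^{-k}(x) = T^{m-k}(x)$ for $0 \le k \le m$. Applying $(i)$ with $x$ replaced by $T^k(x)$ yields $S(T^{k+1}(x), T^k(x)) = (T^{k+1}(x), T^k(x))$ for every $k$. Setting $y_i = T^{m-i}(x)$ for $1 \le i \le m-1$, this gives trivial relations $S(x,y_1) = (x,y_1)$ (using $y_1 = T^{-1}(x) = T^{m-1}(x)$ and the first identity of $(i)$), then $S(y_i, y_{i+1}) = (y_i, y_{i+1})$ for $1 \le i \le m-2$, and finally $S(y_{m-1}, x) = (y_{m-1}, x)$ (using $y_{m-1} = T(x)$ and the second identity of $(i)$). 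This is precisely the chain of trivial relations $xy_1 = xy_1,\ y_1 y_2 = y_1 y_2,\ \ldots,\ y_{m-1} x = y_{m-1} x$ that characterises (by Dehornoy's definition recalled before the lemma) the frozen element of length $m$ ending with $x$, namely $y_1 y_2 \cdots y_{m-1} x = T^{m-1}(x)\, T^{m-2}(x)\, \cdots\, T(x)\, x$.

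I do not anticipate any serious obstacle: part $(i)$ is a short verification from the definition of $T$ and $T^{-1}$ in Lemma \ref{lem_etingof_inverseT}, and part $(ii)$ is a matter of bookkeeping, matching the indices $y_i$ to the negative powers of $T$ and then converting them to positive powers via $T^m = \mathrm{Id}_X$. The only point requiring care is the re-indexing between $y_i = T^{-i}(x)$ and $T^{m-i}(x)$ so that the product really reads left-to-right as $T^{m-1}(x) \cdots T(x)\, x$, and the verification that the endpoints of the chain (the relations involving $x$ itself) match both sides of $(i)$.
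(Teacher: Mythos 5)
Your proof is correct and follows essentially the same route as the paper: part $(i)$ is a direct check from $S(a,b)=(g_a(b),f_b(a))$ and the formulas $T(x)=f_x^{-1}(x)$, $T^{-1}(x)=g_x^{-1}(x)$ of Lemma \ref{lem_etingof_inverseT}, and part $(ii)$ iterates $(i)$ along the $T$-orbit, closing the chain with $T^m(x)=x$ and matching it to Dehornoy's characterisation of the frozen element. The only cosmetic difference is that in $(i)$ you compute both components explicitly (using both $f$- and $g$-formulas), whereas the paper computes one component and deduces the other from involutivity and non-degeneracy; both are valid.
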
 
   \begin{proof} 
   $(i)$ We have  $S(T(x),x)\,=\,S(f^{-1}_x(x), x)= (..,f_x(f^{-1}_x(x))=(..,x)=(T(x),x)$, using first the definition of $S$ and then the non-degeneracy and  involutivity of $S$. Using the same arguments, $S(x,T^{-1}(x))\,=\,S(x,g^{-1}_x(x))\,=\,(x,..)=(x,T^{-1}(x))$.
   \\
     $(ii)$  From $(i)$,  $S(T(x),x)=(T(x),x)$, so $S(T^2(x),T(x))=(T^2(x),T(x)),\;..$ \\$S(T^{m-1}(x),T^{m-2}(x))
     \, =\,(T^{m-1}(x),T^{m-2}(x))$ and $S(x,T^{m-1}(x))
         \, =\,(x,T^{m-1}(x))$, since  $T^m(x)=x$. So,  $T^{m-1}(x)\,T^{m-2}(x)\,...\,T(x)\,x$ is the  frozen of length $m$ ending with $x$.
     \end{proof}

      \begin{lem*}\textbf{\ref{lem_frozen_act_trivially}}
     Assume $(X,S)$ is of class $m$.   Let $x \in X$. 
      Assume  $\pi(x)=t$. \\
      $(i)$   $\pi^{-1}(t^{-1})\,=\,(T^{m-1}(x))^{-1}\,=\,(T^{-1}(x))^{-1}\,=\,(g^{-1}_x(x))^{-1}$\\
      $(ii)$ For $\ell>0$, $\pi^{-1}(t^{\ell})=T^{\ell-1}(x)\,T^{\ell-2}(x)\,...\,T(x)\,x$.\\
        $(iii)$  $\pi^{-1}(t^{m})\,=\theta$, where $\theta$ is the frozen element of length $m$ ending with $x$.\\
         $(iv)$ $\pi^{-1}(t^{\pm m})\bullet w=w$, for all  $w \in \mathbb{Z}^{n}$.\\
         $(v)$  $\pi^{-1}(t^{-m})\,=\theta^{-1}$.\\
         $(vi)$  $\pi^{-1}(t_{i_1}^{\pm m}..t_{i_k}^{\pm m})\bullet w=w$, for all  $w \in \mathbb{Z}^{n}$.
       \end{lem*}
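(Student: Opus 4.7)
The plan is to establish the six parts in order, using Lemma \ref{lem_calcul_pi-1} as the main computational toolkit and Lemma \ref{lem_form_frozen} for the frozen-element identification. The overall strategy is to pass back and forth between $\pi^{-1}$ and the cocycle product formula, and to track the permutation of $X$ that each element of $G(X,S)$ induces via the left action $x \mapsto f_x^{-1}$.

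For (i), I would start from Lemma \ref{lem_calcul_pi-1}(i): since $\pi(y^{-1})=t_{T(y)}^{-1}$ for any $y\in X$, choosing $y=T^{-1}(x)$ gives $\pi(y^{-1})=t_x^{-1}=t^{-1}$, whence $\pi^{-1}(t^{-1})=(T^{-1}(x))^{-1}$, and the remaining equalities $T^{-1}(x)=T^{m-1}(x)=g_x^{-1}(x)$ follow from $T^m=\mathrm{Id}_X$ (class $m$, see Remark \ref{remark_defn_class}) and the definition of $T^{-1}$ in Lemma \ref{lem_etingof_inverseT}. For (ii), I would argue by induction on $\ell$: the base case $\ell=1$ is Lemma \ref{lem_calcul_pi-1}(ii), and the inductive step uses Lemma \ref{lem_calcul_pi-1}(iv) to write $\pi^{-1}(t^{\ell+1})=\pi^{-1}(\pi^{-1}(t^{\ell})\bullet t)\,\pi^{-1}(t^{\ell})$; the permutation carried by $\pi^{-1}(t^{\ell})=T^{\ell-1}(x)\cdots T(x)\,x$ is $f_{T^{\ell-1}(x)}^{-1}\circ\cdots\circ f_x^{-1}$, which sends the index $x$ to $T^{\ell}(x)$, producing the extra factor $T^{\ell}(x)$ at the left. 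Specializing $\ell=m$ and invoking Lemma \ref{lem_form_frozen}(ii) yields (iii).

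The key point is (iv). From (iii), $\pi^{-1}(t^{m})=\theta=T^{m-1}(x)\cdots T(x)\,x$, so the permutation that $\theta$ induces on $\mathbb{Z}^{n}$ is the composite $f_{T^{m-1}(x)}^{-1}\circ\cdots\circ f_{T(x)}^{-1}\circ f_x^{-1}$, which is precisely the inverse of $f_x f_{T(x)}\cdots f_{T^{m-1}(x)}$; by the class-$m$ hypothesis (Definition \ref{defn_class}(ii)), this composite equals $\mathrm{Id}_X$. Hence $\theta\bullet w=w$ for all $w$, and the same holds for $\theta^{-1}$ since the inverse of the identity permutation is the identity. This is the main content of the lemma and the only place the class-$m$ condition is used in an essential way.

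For (v), I would apply the cocycle formula $\pi(a_1a_2)=(a_2^{-1}\bullet\pi(a_1))\pi(a_2)$ with $a_1=\theta^{-1}$ and $a_2=\theta$: the left side is $\pi(1)=1$, and using (iv) to discard the action of $\theta^{-1}$ gives $\pi(\theta^{-1})=t^{-m}$, so $\pi^{-1}(t^{-m})=\theta^{-1}$. Finally, (vi) follows by induction on $k$: applying Lemma \ref{lem_calcul_pi-1}(iv) to $u=t_{i_1}^{\pm m}\cdots t_{i_{k-1}}^{\pm m}$ and $v=t_{i_k}^{\pm m}$, the factor $\pi^{-1}(v)=\theta_{i_k}^{\pm 1}$ acts trivially by (iv), so $\pi^{-1}(uv)=\pi^{-1}(u)\,\theta_{i_k}^{\pm 1}$; both factors act trivially (the first by induction, the second by (iv)), so their product does too. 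No step is genuinely hard once (iv) is in place; the whole difficulty is extracting the identity permutation from the class-$m$ condition and then threading this trivial action through Lemma \ref{lem_calcul_pi-1}(iv).
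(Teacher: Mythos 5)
Your proposal is correct and follows essentially the same route as the paper: the same computation rules from Lemma \ref{lem_calcul_pi-1}, the same inductions for $(ii)$ and $(vi)$, the identification via Lemma \ref{lem_form_frozen} for $(iii)$, and the class-$m$ condition turning the permutation of $\theta$ into $Id_X$ for $(iv)$. The only (harmless) deviation is organizational: you establish the triviality of the action of $\theta^{\pm1}$ first, then deduce $(v)$ from $\pi(\theta^{-1}\theta)=1$ and only then obtain the $t^{-m}$ case of $(iv)$, whereas the paper argues via $\pi^{-1}(t^{-m}t^{m})$ to get $(iv)$ and then $(v)$ -- a mirror image of the same computation, with no circularity in your version.
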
 
       \begin{proof}
        $(i)$ We have $\pi((T^{m-1}(x))^{-1})=(t_{f^{-1}_{T^{m-1}(x)}T^{m-1}(x)} )^{-1}=(t_{T^{m}(x)})^{-1}=t^{-1}$, using first Lemma \ref{lem_calcul_pi-1}$(i)$, next the definition of $T$ and at last since  the class of the solution is $m$, we have $T^{m}(x)=f^{-1}_{T^{m-1}(x)}f^{-1}_{T^{m-2}(x)}..f^{-1}_{T(x)}f^{-1}_{x}(x)=\,f^{-1}_{T^{m-1}(x)}T^{m-1}(x)=x$. From lemma \ref{lem_etingof_inverseT}, $T^{-1}(x)=\,g^{-1}_x(x)$ and $T^{-1}(x)=T^{m-1}(x)$. \\
        $(ii)$ By induction on $\ell$. For $\ell=2$, $\pi^{-1}(t^2)=f^{-1}_x(x)\,x=T(x)x$, using  Lemma \ref{lem_calcul_pi-1}$(ii)$. For $\ell>2$, 
            $\pi^{-1}(t^{\ell})=\,\pi^{-1}(\pi^{-1}(t^{})\bullet t^{\ell-1})\,\pi^{-1}(t)=\,  \pi^{-1}(x\bullet t^{\ell-1})\,x=\, \pi^{-1}(t_{f_x^{-1}(x)}^{\ell-1})x=$\\$ T^{\ell-2}(T(x))\,..T(T(x))\,T(x)\,x=T^{\ell-1}(x)\,...\,T(x)\,x$, using first Lemma\ref{lem_calcul_pi-1}$(iii),(ii)$ and then the induction assumption for  $t_{f_x^{-1}(x)}$ and the fact that  $\pi^{-1}(t_{f_x^{-1}(x)})=T(x)$.\\   
            $(iii)$ From $(ii)$, if $\pi(x)=t$, then  $\pi^{-1}(t^{m})=T^{m-1}(x)\,T^{m-2}(x)\,...\,T(x)\,x$ and from lemma \ref{lem_form_frozen}$(ii)$,  this is the frozen element of length $m$ ending with $x$. \\
             $(iv)$ The element $\pi^{-1}(t^{m})=T^{m-1}(x)\,T^{m-2}(x)\,...\,T(x)\,x$  acts on $\mathbb{Z}^{n}$ via the permutation $f^{-1}_{T^{m-1}(x)}..f^{-1}_{T(x)}f^{-1}_{x}$.  Since $(X,S)$ is of class $m$, $f^{-1}_{T^{m-1}(x)}..f^{-1}_{T(x)}f^{-1}_{x}=Id_X$, that is 
            $\pi^{-1}(t^{m})\bullet w=w$,  for all  $w \in \mathbb{Z}^{n}$. Let  $w \in \mathbb{Z}^{n}$. We show  $\pi^{-1}(t^{-m})\bullet w=w$. On one hand,  $\pi^{-1}(t^{-m}t^{m})\bullet w=w$. On the other hand,
            $\pi^{-1}(t^{-m}t^{m})=\,\pi^{-1}(\pi^{-1}(t^{m})\bullet t^{-m})\,\pi^{-1}(t^{m})$,  from Lemma \ref{lem_calcul_pi-1}$(iv)$. As $\pi^{-1}(t^{m})$ acts trivially on    $\mathbb{Z}^{n}$, 
             $\pi^{-1}(t^{-m}t^{m})\bullet w=\,\pi^{-1}(t^{-m})\bullet w$. So,  
             $\pi^{-1}(t^{-m})\bullet w=w$.\\
             $(v)$ From $(iv)$,   $\pi^{-1}(t^{-m}t^{m})=\,\pi^{-1}(\pi^{-1}(t^{m})\bullet t^{-m})\,\pi^{-1}(t^{m})= \,\pi^{-1}( t^{-m})\,\pi^{-1}(t^{m})=1$, so $\pi^{-1}(t^{-m})$ is the inverse of $\pi^{-1}(t^{m})$, that is $\pi^{-1}(t^{-m})\,=\theta^{-1}$.\\
            $(vi)$ By induction on $k$. The case  $k=1$ results from $(iii)$. Assume $k>1$. Then from  Lemma \ref{lem_calcul_pi-1}$(iii)$,  $\pi^{-1}(t_{i_1}^{\pm m}t_{i_2}^{\pm m}..t_{i_k}^{\pm m})=$      
            $\pi^{-1} (\pi^{-1}(t_{i_2}^{\pm m}..t_{i_k}^{\pm m})\bullet t_{i_1}^{\pm m})\,\pi^{-1}(t_{i_2}^{\pm m}..t_{i_k}^{\pm m})$.  So,  
            $\pi^{-1}(t_{i_1}^{\pm m}t_{i_2}^{\pm m}..t_{i_k}^{\pm m})\bullet w=$      
                 $\pi^{-1} (\pi^{-1}(t_{i_2}^{\pm m}..t_{i_k}^{\pm m})\bullet t_{i_1}^{\pm m})\,\bullet w=   
             \pi^{-1}(t_{i_1}^{\pm m})\bullet w =w$, using the 
             induction assumption on  
            $\pi^{-1}(t_{i_2}^{\pm m}..t_{i_k}^{\pm m})$ and then the result for $k=1$.
            \end{proof}

\bigskip\bigskip\noindent
{ Fabienne Chouraqui,}

\smallskip\noindent
University of Haifa at Oranim, Israel.
\smallskip\noindent
E-mail: {\tt fchoura@sci.haifa.ac.il}

\end{document}